\theoremstyle{plain}
\newtheorem{theorem}{Theorem}[section]
\newtheorem{lemma}[theorem]{Lemma}
\newtheorem{proposition}[theorem]{Proposition}
\theoremstyle{definition}
\newtheorem{definition}[theorem]{Definition}
\newtheorem{remark}{Remark}
\newcommand{\X}{\mathcal X}
\newcommand{\Y}{\mathcal Y}
\newcommand{\R}{\mathbb R}
\newcommand{\Z}{\mathbb Z}
\newcommand{\m}{\mathfrak m}
\newcommand{\dd}{\rho}
\newcommand{\ddd}{\mathrm{d}}
\newcommand{\vol}{\mathrm{vol}}
\newcommand{\diam}{\mathrm{diam}}
\newcommand{\supp}{\mathrm{supp}}
\newcommand{\PhiEx}{\Phi_{\mathrm{ex}}}
\DeclareMathOperator*{\argmin}{arg\,min}
\definecolor{nestcol}{RGB}{235,145,25}
\definecolor{balcol}{RGB}{63,150,63}
\definecolor{twocol}{RGB}{0,170,190}
\definecolor{threecol}{RGB}{198,77,160}
\newcommand{\glyphscale}{4.3mm}
\tikzset{
  graph edge/.style={draw=black!62, line width=0.65pt},
  graph vertex/.style={circle, fill=black, inner sep=0pt, minimum size=2.2pt},
  glyph edge/.style={draw=black, line width=0.33pt, line cap=round, line join=round},
  internal/.style={circle, fill=black, inner sep=0pt, minimum size=1.65pt},
  root/.style={circle, fill=black, inner sep=0pt, minimum size=1.95pt},
  leaf/.style={circle, draw=black, fill=white, inner sep=0pt, minimum size=1.65pt, line width=0.16pt},
  leaf number/.style={font=\tiny, inner sep=0pt},
  ray2box/.style={draw=twocol, rounded corners=1.2pt, fill=white, inner sep=0.9pt, line width=0.55pt},
  ray3box/.style={draw=threecol, rounded corners=1.2pt, fill=white, inner sep=0.9pt, line width=0.55pt},
  nestbox/.style={draw=nestcol, rounded corners=1.1pt, fill=white, inner sep=0.8pt, line width=0.50pt},
  balbox/.style={draw=balcol, rounded corners=1.1pt, fill=white, inner sep=0.8pt, line width=0.50pt},
  apexbox/.style={draw=black!65, rounded corners=1.2pt, fill=gray!7, inner sep=0.9pt, line width=0.55pt},
  legendbox/.style={draw=black!20, rounded corners=1.6pt, fill=white, inner sep=2.0pt},
  every node/.style={font=\scriptsize}
}
\newcommand{\Leaf}[3]{
  \node[leaf] at (#2,#3) {};
  \node[leaf number] at ($(#2,#3)+(0,-0.19)$) {#1};
}
\newcommand{\TreeTwo}[4]{
\begin{tikzpicture}[x=\glyphscale,y=\glyphscale,baseline=-0.25ex]
  \draw[glyph edge] (0,1.18)--(-.62,.06);
  \draw[glyph edge] (0,1.18)--(0,.55);
  \draw[glyph edge] (0,1.18)--(.62,.06);
  \draw[glyph edge] (0,.55)--(-.20,0);
  \draw[glyph edge] (0,.55)--(.20,0);
  \node[root] at (0,1.18) {};
  \node[internal] at (0,.55) {};
  \Leaf{#3}{-.62}{.06}
  \Leaf{#1}{-.20}{0}
  \Leaf{#2}{ .20}{0}
  \Leaf{#4}{ .62}{.06}
\end{tikzpicture}
}
\newcommand{\TreeThree}[4]{
\begin{tikzpicture}[x=\glyphscale,y=\glyphscale,baseline=-0.25ex]
  \draw[glyph edge] (0,1.18)--(-.64,.04);
  \draw[glyph edge] (0,1.18)--(.05,.55);
  \draw[glyph edge] (.05,.55)--(-.26,0);
  \draw[glyph edge] (.05,.55)--(.05,0);
  \draw[glyph edge] (.05,.55)--(.36,0);
  \node[root] at (0,1.18) {};
  \node[internal] at (.05,.55) {};
  \Leaf{#4}{-.64}{.04}
  \Leaf{#1}{-.26}{0}
  \Leaf{#2}{ .05}{0}
  \Leaf{#3}{ .36}{0}
\end{tikzpicture}
}
\newcommand{\TreeNested}[4]{
\begin{tikzpicture}[x=\glyphscale,y=\glyphscale,baseline=-0.25ex]
  \draw[glyph edge] (0,1.25)--(-.66,.07);
  \draw[glyph edge] (0,1.25)--(0,.82);
  \draw[glyph edge] (0,.82)--(-.22,.42);
  \draw[glyph edge] (0,.82)--(.40,.06);
  \draw[glyph edge] (-.22,.42)--(-.42,0);
  \draw[glyph edge] (-.22,.42)--(-.05,0);
  \node[root] at (0,1.25) {};
  \node[internal] at (0,.82) {};
  \node[internal] at (-.22,.42) {};
  \Leaf{#4}{-.66}{.07}
  \Leaf{#1}{-.42}{0}
  \Leaf{#2}{-.05}{0}
  \Leaf{#3}{ .40}{.06}
\end{tikzpicture}
}
\newcommand{\TreeBalanced}[4]{
\begin{tikzpicture}[x=\glyphscale,y=\glyphscale,baseline=-0.25ex]
  \draw[glyph edge] (0,1.16)--(-.34,.54);
  \draw[glyph edge] (0,1.16)--(.34,.54);
  \draw[glyph edge] (-.34,.54)--(-.55,0);
  \draw[glyph edge] (-.34,.54)--(-.18,0);
  \draw[glyph edge] (.34,.54)--(.18,0);
  \draw[glyph edge] (.34,.54)--(.55,0);
  \node[root] at (0,1.16) {};
  \node[internal] at (-.34,.54) {};
  \node[internal] at (.34,.54) {};
  \Leaf{#1}{-.55}{0}
  \Leaf{#2}{-.18}{0}
  \Leaf{#3}{ .18}{0}
  \Leaf{#4}{ .55}{0}
\end{tikzpicture}
}
\newcommand{\TreeStar}{
\begin{tikzpicture}[x=\glyphscale,y=\glyphscale,baseline=-0.25ex]
  \draw[glyph edge] (0,1.15)--(0,.60);
  \draw[glyph edge] (0,.60)--(-.58,0);
  \draw[glyph edge] (0,.60)--(-.20,0);
  \draw[glyph edge] (0,.60)--(.20,0);
  \draw[glyph edge] (0,.60)--(.58,0);
  \node[root] at (0,1.15) {};
  \node[internal] at (0,.60) {};
  \Leaf{1}{-.58}{0}
  \Leaf{2}{-.20}{0}
  \Leaf{3}{ .20}{0}
  \Leaf{4}{ .58}{0}
\end{tikzpicture}
}
\newcommand{\tikzsphsetvalued}{
\begin{tikzpicture}[scale=1.0]
\begin{scope}[shift={(0,0)}]
  \draw[thick] (0,0) circle (2);
  \draw[thick] (-2,0) arc[start angle=180,end angle=360,x radius=2,y radius=0.55];
  \draw[densely dashed] (-2,0) arc[start angle=180,end angle=0,x radius=2,y radius=0.55];
  \fill[red] (0,2) circle (3.5pt) node[above,black] {$N$};
  \fill[red] (0,-2) circle (3.5pt) node[below,black] {$S$};
  \draw[very thick,blue] (-2,0) arc[start angle=180,end angle=360,x radius=2,y radius=0.55];
  \draw[very thick,blue,densely dashed] (-2,0) arc[start angle=180,end angle=0,x radius=2,y radius=0.55];
  \node[align=center,font=\scriptsize] at (0,-2.85) {$\frac12\delta_N+\frac12\delta_S$\\mean set = equator};
\end{scope}
\begin{scope}[shift={(5.7,0)}]
  \draw[thick] (0,0) circle (2);
  \draw[thick] (-2,0) arc[start angle=180,end angle=360,x radius=2,y radius=0.55];
  \draw[densely dashed] (-2,0) arc[start angle=180,end angle=0,x radius=2,y radius=0.55];
  \fill[blue] (0,2) circle (3.5pt) node[above,black] {$N$};
  \fill[blue] (0,-2) circle (3.5pt) node[below,black] {$S$};
  \draw[very thick,red] (-2,0) arc[start angle=180,end angle=360,x radius=2,y radius=0.55];
  \draw[very thick,red,densely dashed] (-2,0) arc[start angle=180,end angle=0,x radius=2,y radius=0.55];
  \node[align=center,font=\scriptsize] at (0,-2.85) {uniform on equator\\mean set = $\{N,S\}$};
\end{scope}
\end{tikzpicture}
}
\newcommand{\CherryTree}[5]{%
  \begin{scope}[shift={#1}, scale=0.3]
    \draw[rounded corners=2pt, draw=#2, line width=0.6pt, fill=white]
      (-0.72,-0.10) rectangle (0.72,1.22);

    \coordinate (R) at (0,1.00);     
    \coordinate (C) at (-0.12,0.60); 
    \coordinate (L1) at (-0.38,0.22);
    \coordinate (L2) at ( 0.00,0.22);
    \coordinate (L3) at ( 0.38,0.22);

    \draw[gray!70, line width=0.6pt] (R)--(C);
    \draw[gray!70, line width=0.6pt] (R)--(L3);
    \draw[gray!70, line width=0.6pt] (C)--(L1);
    \draw[gray!70, line width=0.6pt] (C)--(L2);

    \fill (R) circle (1.4pt);
    \fill (C) circle (1.2pt);

    \draw[draw=gray!75, fill=white, line width=0.5pt] (L1) circle (1.5pt);
    \draw[draw=gray!75, fill=white, line width=0.5pt] (L2) circle (1.5pt);
    \draw[draw=gray!75, fill=white, line width=0.5pt] (L3) circle (1.5pt);

    \node[font=\scriptsize] at ($(L1)+(0,-0.10)$) {#3};
    \node[font=\scriptsize] at ($(L2)+(0,-0.10)$) {#4};
    \node[font=\scriptsize] at ($(L3)+(0,-0.10)$) {#5};
  \end{scope}
}
\newcommand{\tikzputhree}{
\begin{tikzpicture}[scale=1.8, line cap=round, line join=round]

  \tikzset{
    simplex/.style={
      draw=gray!90!black,
      very thick,
      fill=gray!10
    },
    metric/.style={
      draw=blue!50!black,
      very thick,
      fill=blue!25
    },
    tripod/.style={
      draw=red!80!black,
      very thick
    },
    pointblue/.style={
      circle,
      fill=blue!70!black,
      inner sep=2pt
    },
    pointred/.style={
      circle,
      fill=red!80!black,
      inner sep=2pt
    },
    pointblack/.style={
      circle,
      fill=black,
      inner sep=2pt
    }
  }

  \coordinate (A) at (0,0);
  \coordinate (B) at (6,0);
  \coordinate (C) at (3,{3*sqrt(3)});

  \coordinate (M23) at (3,0);                    
  \coordinate (M13) at (1.5,{1.5*sqrt(3)});      
  \coordinate (M12) at (4.5,{1.5*sqrt(3)});      

  \coordinate (G) at (3,{sqrt(3)});              

  \draw[simplex] (A)--(B)--(C)--cycle;

  \draw[metric] (M12)--(M13)--(M23)--cycle;

  \draw[tripod] (G)--(M12);
  \draw[tripod] (G)--(M13);
  \draw[tripod] (G)--(M23);

  \node[pointblack] at (A) {};
  \node[pointblack] at (B) {};
  \node[pointblack] at (C) {};

  \node[below left]  at (A) {$(1,0,0)$};
  \node[below right] at (B) {$(0,1,0)$};
  \node[above]       at (C) {$(0,0,1)$};

  \node[pointred] at (M12) {};
  \node[pointred] at (M13) {};
  \node[pointred] at (M23) {};

    \CherryTree{($(3.75,2.165063509461-0.1)$)}{red!70!black}{1}{2}{3}
    \CherryTree{($(2.25,2.165063509461-0.1)$)}{red!70!black}{1}{3}{2}
    \CherryTree{($(3,0.866025403784)$)}{red!70!black}{2}{3}{1}

  \node[right] at (M12)
    {$\left(0,\nicefrac12,\nicefrac12\right)$};

  \node[left] at (M13)
    {$\left(\nicefrac12,0,\nicefrac12\right)$};

  \node[below] at (M23)
    {$\left(\nicefrac12,\nicefrac12,0\right)$};

  \node[pointred] at (G) {};
  \node[below] at (G)
    {$\left(\nicefrac13,\nicefrac13,\nicefrac13\right)$};




  \node[gray!70!black, font=\large] at (3,4.45)
    {$\Delta_3$};

  \node[blue!70!black, font=\large] at (3,2.9)
    {$\widetilde{\mathcal M}_3$};

  \node[red!80!black, font=\large] at (3,2.1)
    {$\widetilde{\mathcal T}_3$};

\end{tikzpicture}
}
\newcommand{\tikzpufourpetersen}{
\begin{tikzpicture}[scale=1.0]
\def\R{7.35}
\def\r{3.25}
\coordinate (V14)  at ( 90:\R);
\coordinate (V124)  at ( 18:\R);
\coordinate (V12) at (-54:\R);
\coordinate (V34)  at (-126:\R);
\coordinate (V134) at ( 162:\R);
\coordinate (W23) at ( 90:\r);
\coordinate (W24) at ( 18:\r);
\coordinate (W123)  at (-54:\r);
\coordinate (W234)  at (-126:\r);
\coordinate (W13)  at ( 162:\r);
\coordinate (Apex) at (0,0);
\begin{scope}[on background layer]
  \draw[graph edge] (V12)--(V34)--(V134)--(V14)--(V124)--cycle;
  \draw[graph edge] (W123)--(W13)--(W24)--(W234)--(W23)--cycle;
  \draw[graph edge] (V12)--(W123);
  \draw[graph edge] (V34)--(W234);
  \draw[graph edge] (V134)--(W13);
  \draw[graph edge] (V14)--(W23);
  \draw[graph edge] (V124)--(W24);
\end{scope}
\foreach \P in {V12,V34,V134,V14,V124,W123,W234,W13,W23,W24}
  \node[graph vertex] at (\P) {};
\node[ray2box] at ($(V12)+(0.1,-0.45)$)     {\TreeTwo{1}{2}{3}{4}};
\node[ray2box] at ($(V34)+(-0.1,-0.45)$)  {\TreeTwo{3}{4}{1}{2}};
\node[ray3box] at ($(V134)+(-0.42,0.16)$){\TreeThree{1}{3}{4}{2}};
\node[ray2box] at ($(V14)+(0,0.45)$)    {\TreeTwo{1}{4}{2}{3}};
\node[ray3box] at ($(V124)+(0.42,0.16)$){\TreeThree{1}{2}{4}{3}};
\node[ray3box] at ($(W123)+(0.2,-0.45)$)    {\TreeThree{1}{2}{3}{4}};
\node[ray3box] at ($(W234)+(-0.2,-0.45)$) {\TreeThree{2}{3}{4}{1}};
\node[ray2box] at ($(W13)+(-0.42,0.15)$) {\TreeTwo{1}{3}{2}{4}};
\node[ray2box] at ($(W23)+(0,0.45)$)    {\TreeTwo{2}{3}{1}{4}};
\node[ray2box] at ($(W24)+(0.42,0.15)$) {\TreeTwo{2}{4}{1}{3}};
\node[balbox]  at ($(V12)!0.50!(V34)+(0,0)$)  {\TreeBalanced{1}{2}{3}{4}}; 
\node[nestbox] at ($(V34)!0.50!(V134)+(0,0)$){\TreeNested{3}{4}{1}{2}};  
\node[nestbox] at ($(V134)!0.50!(V14)+(0,0)$){\TreeNested{1}{4}{3}{2}};  
\node[nestbox] at ($(V14)!0.50!(V124)+(0,0)$){\TreeNested{1}{4}{2}{3}}; 
\node[nestbox] at ($(V124)!0.50!(V12)+(0,0)$){\TreeNested{1}{2}{4}{3}}; 
\node[nestbox] at ($(W123)!0.50!(W13)+(0,0)$){\TreeNested{1}{3}{2}{4}}; 
\node[balbox]  at ($(W13)!0.50!(W24)+(0,0)$){\TreeBalanced{1}{3}{2}{4}};
\node[nestbox] at ($(W24)!0.50!(W234)+(0,0)$){\TreeNested{2}{4}{3}{1}}; 
\node[nestbox] at ($(W234)!0.50!(W23)+(0,0)$){\TreeNested{2}{3}{4}{1}}; 
\node[nestbox] at ($(W23)!0.50!(W123)+(0,0)$){\TreeNested{2}{3}{1}{4}}; 
\node[nestbox] at ($(V12)!0.50!(W123)+(0,0)$){\TreeNested{1}{2}{3}{4}}; 
\node[nestbox] at ($(V34)!0.50!(W234)+(0,0)$){\TreeNested{3}{4}{2}{1}}; 
\node[nestbox] at ($(V134)!0.50!(W13)+(0,0)$){\TreeNested{1}{3}{4}{2}}; 
\node[balbox]  at ($(V14)!0.50!(W23)+(0,0)$){\TreeBalanced{1}{4}{2}{3}};
\node[nestbox] at ($(V124)!0.50!(W24)+(0,0)$){\TreeNested{2}{4}{1}{3}}; 
\node[apexbox] at (Apex) {\TreeStar};
\end{tikzpicture}
}
\begin{document}

\begin{frontmatter}
\title{Uniform Consistency of Generalized Fr\'echet Means}
\runtitle{Generalized Fr\'echet Means}

\begin{aug}
\author[C]{\inits{A.}\fnms{Andrea}~\snm{Aveni}}
\author[C]{\inits{M.}\fnms{Martin}~\snm{Bladt}}
\author[B,D]{\inits{S.}\fnms{Sayan}~\snm{Mukherjee}\thanksref{sayanmem}}

\address[C]{Department of Mathematical Sciences, University of Copenhagen, Copenhagen, Denmark}
\address[B]{Department of Statistical Science, Duke University, Durham, North Carolina, USA}
\address[D]{Max Planck Institute for Mathematics in the Sciences, Leipzig, Germany}
\thankstext[\textdagger]{sayanmem}{Sayan Mukherjee passed away on 31 March 2025. This article is dedicated to his memory.}
\end{aug}

\begin{abstract}
Loss-based notions of centre on nonlinear spaces range from the Fr\'echet mean and power means to the geometric median and, in a limiting sense, the Chebyshev centre. To use such summaries statistically, one first needs a law of large numbers that remains valid beyond smooth manifolds and beyond a fixed choice of loss. We study generalized Fr\'echet means on metric spaces with the Heine--Borel property, obtained by replacing squared distance with a convex loss under a mild exponential-growth condition. We prove existence and compactness of the population mean set, establish a sharp diameter bound, obtain almost-sure consistency of empirical $\phi$-means, and derive a uniform strong law over compact classes of losses. The analysis is driven by a deterministic argmin principle together with a Glivenko--Cantelli theorem for monotone classes. For isotropic densities on Riemannian symmetric spaces, we identify the population $\phi$-mean for every strictly increasing loss for which the objective is finite, including bounded robust losses. We also illustrate the framework on spheres and on the polyhedral space of ultrametric phylogenetic trees.
\end{abstract}

\begin{keyword}
\kwd{Fr\'echet means}
\kwd{strong consistency}
\kwd{Glivenko--Cantelli theorem}
\kwd{ultrametric spaces}
\kwd{phylogenetic trees}
\end{keyword}

\end{frontmatter}
\section{Introduction}\label{SSIntro}
The Fr\'echet mean of a probability measure $\mu$ on a metric space $(\X,\dd)$ is the minimizer of the function $x\mapsto\int\dd(x,y)^2\mu(\ddd y)$. Introduced by \citet{frechet1948elements}, it generalizes the arithmetic mean to non-Euclidean settings and is now the central object of statistical inference on Riemannian manifolds, of statistical shape analysis \citep{Karcher77}, and of computational anatomy. Its asymptotic theory was initiated by \citet{Bhatta, Bhatta2}, who established strong consistency and central limit theorems on manifolds; the hypotheses have been progressively relaxed in subsequent work, which we recall in Section \ref{SSLittRev}.

The squared distance in the definition is a convenient mathematical convention rather than a fundamental one. Replacing $\dd^2$ by $\dd^p$ for $p\geq 1$ yields the $p$-means \citep{BhattaBook}, with $p=1$ recovering the geometric median; replacing it by a general convex function $\phi:\R_+\to\R_+$ gives the \emph{$\phi$-mean}
\begin{equation}\label{eq:phi-mean}
\m^\phi_\mu:=\argmin_{x\in\X}\int_\X \phi(\dd(x,y))\,\mu(\ddd y).
\end{equation}
The Fr\'echet mean, and the geometric median are recovered by $\phi(t)=t^2$ and $\phi(t)=t$; for $\phi(t)=t^p$ with $p\to\infty$, Proposition \ref{chebyshev} below shows $\m^\phi_\mu$ converges to the Chebyshev centre of $\supp\mu$, minimizing the largest distance to the support. The family thus interpolates between the most regular and the most robust notions of centre, in the spirit of Huber, Tukey, and Andrews losses in robust statistics on $\R^d$.

Existing work has either fixed $\phi$ or restricted to a one-parameter family $\{t^p\}_{p\geq 1}$ or close variants. The frameworks developed by \citet{Scho, Scho2} are more general, as they admit cost functions that do not factor through $\dd$, but check a quadruple-inequality hypothesis for each cost separately and do not address joint behaviour over function classes. We ask when the empirical $\phi$-mean is a strongly consistent estimator of its population analogue, uniformly over $\phi$ in a class $\Psi$ of admissible losses. This law-of-large-numbers question is fundamental on nonlinear and possibly singular sample spaces, where the empirical centre is often the first and most basic statistical summary available. A uniform statement justifies reporting the path $\phi\mapsto\m^\phi_n$ as a robust summary of the centre of $\mu$, supports data-driven selection of $\phi$, and, in the case when all $\phi$-means in $\Psi$ collapse to a single point (as on a symmetric space with an isotropic density), yields a robustness conclusion that is uniform in the loss. To the best of our knowledge, existing strong laws for generalized Fr\'echet means are pointwise in the loss, or restricted to low-dimensional families such as $p$-means; they do not provide uniform consistency over compact classes of admissible losses on general metric spaces.

We work on metric spaces with the Heine--Borel property, that is, spaces in which closed bounded sets are compact. By Hopf--Rinow, this broad setting includes every complete connected Riemannian manifold with its geodesic metric; it also includes the canonical manifold-valued data spaces (Stiefel and Grassmann manifolds, the cone of SPD matrices with the affine-invariant metric, Kendall shape spaces, hyperbolic spaces) and several singular spaces of statistical interest, in particular finite polyhedral complexes and the ultrametric spaces of rooted phylogenetic trees considered in Section \ref{SSTrees}. Our admissible losses form a class of convex bijections subject to a mild exponential-growth control:
$$\PhiEx:=\Bigl\{\phi:\R_+\to\R_+\text{ convex bijection with }\gamma_\phi<\infty\Bigr\},\qquad\gamma_\phi:=\sup_{x\geq 0}\frac{\phi(x+1)}{\phi(1)+\phi(x)}.$$
The class $\PhiEx$ contains every power $\phi(t)=t^p$ for $p\geq 1$ (with $\gamma_\phi=2^{p-1}$), every rescaled exponential $\phi(t)=(a^t-1)/(a-1)$ for $a>1$ (with $\gamma_\phi=a$), and is closed under sums, products, and positive scalar multiples. The growth condition $\gamma_\phi<\infty$ is sharp: Lemma \ref{lemFinite} shows that on an unbounded complete locally compact length space, $\PhiEx$ is exactly the class for which the loss $F^\phi_\mu(x):=\int\phi(\dd(x,y))\,\mu(\ddd y)$ obeys the dichotomy ``finite everywhere or finite nowhere''. Outside $\PhiEx$, one can construct measures for which $F^\phi_\mu$ is finite on a non-trivial proper subset of $\X$.

The main results of the paper are as follows. Theorem \ref{diametro} shows that $\m^\phi_\mu$ is a non-empty compact set with diameter bounded by $2\phi^{-1}(\inf F^\phi_\mu)$; the bound is tight, for instance, for invariant probability measures on compact spaces with transitive isometry group (Proposition \ref{equivariance}). Theorems \ref{Cons} and \ref{UnifCons} establish almost-sure consistency of empirical $\phi$-mean sets in one-sided Hausdorff distance for each $\phi\in\PhiEx$ for which $F^\phi_\mu$ is finite somewhere, and the corresponding uniform almost-sure consistency result for compact, uniformly growth-controlled families $\Psi\subseteq\PhiEx$ whose population means are singletons. Theorem \ref{symm} identifies $\m^\phi_\mu$ for measures on Riemannian symmetric spaces with isotropic decreasing density, and applies to every strictly increasing loss for which the objective is finite, including bounded strictly increasing losses outside $\PhiEx$.

The two consistency theorems rely on a key deterministic lemma (Lemma \ref{argmin}). It packages the standard M-estimator argument (uniform convergence on a tight compact set, together with well-separated population minima, forces one-sided Hausdorff convergence of argmin sets) in a form that applies equally to a single objective and to a family. The probabilistic inputs are a uniform LLN on compacts, and a Glivenko--Cantelli theorem for monotone function classes (Lemma \ref{glivenko}) obtained via bracketing numbers. Uniform well-separation of population minima requires a compactness argument on the index set $\Psi$, supplied by the compact-family separation lemma (Lemma \ref{epi}). Proposition \ref{chebyshev} is a deterministic application of the same argmin lemma, giving the Chebyshev limit at $p=\infty$.

The rest of the paper is organized as follows. Section \ref{SSGenFre} fixes notation and the class $\PhiEx$. Section \ref{SSCons} states the two consistency theorems. Section \ref{SSProofs} contains the abstract argmin lemma, the verification of its hypotheses, the proofs of Theorems \ref{Cons}--\ref{UnifCons}, and in Section \ref{SSChebyshev}, the Chebyshev limit is provided. Section \ref{SSSym} is the uniqueness theorem on symmetric spaces. Section \ref{SSTrees} gives a real-data illustration on projectivized ultrametric tree space. The supplementary material records the low-dimensional geometry, the Petersen-graph model of $\widetilde{\mathcal T}_4$, and the additional computational details for that application. Section \ref{SSDisc} discusses extensions and open problems. The appendix collects some auxiliary properties of $\PhiEx$, the proof of a measure-zero geometric fact used in Section \ref{SSSym}, and additional algebraic verification behind the ultrametric visualizations.

\subsection{Literature Review}\label{SSLittRev}

In linear spaces, the arithmetic mean is the standard centre, but other notions of centrality, such as the geometric median, are used to reduce sensitivity to outliers; see \citet{horror} for a discussion in linear models.  The foundational perspectives of \citet{DeFinetti} and \citet{Kolmog} remain useful in this context.  Fr\'echet means extend the arithmetic mean to Riemannian manifolds and, more generally, to metric spaces.

Large-sample theory for Fr\'echet and generalized Fr\'echet means has focused mainly on strong laws of large numbers and central limit theorems. The latter describes the rate and limiting distribution of empirical means around their population targets.  The intrinsic and extrinsic manifold theory begins with the influential work of \citet{Bhatta, Bhatta2}.  The circle has been studied in detail, including sharp conditions for uniqueness of the Fr\'echet mean \citep{circle,circle2}.  Consistency for $p$-means and related metric-space averages is treated by \citet{BhattaBook,pmeans}.  In these settings, the distance loss is fixed or belongs to a low-dimensional parametric family.

A general consistency framework is developed by \citet{Scho, Scho2}, where the loss is allowed to be an arbitrary measurable cost function and consistency of the empirical argmin set is established under a quadruple-inequality condition controlling the cost under $(x,y)\leftrightarrow(y,x)$. Our framework is complementary, as we restrict to costs of the form $\phi\circ\dd$ with convex $\phi$, but obtain explicit growth conditions ($\phi\in\PhiEx$), a sharp diameter bound for the population $\phi$-mean set, and a uniform Glivenko--Cantelli result over function classes $\Psi\subseteq\PhiEx$. To our knowledge, no analogue of the uniform statement is available for the quadruple-inequality framework. A different non-parametric extension is studied on tree spaces by \citet{tree}.

For central limit theorems, \citet{Omnibus} proved a CLT for a large class of spaces but assumed strong conditions on the support of the probability measures. Most of the hypotheses on the support of the probability measures
were removed in the recent work of \citet{CLT}, where a CLT for Fr\'echet means on Riemannian manifolds is proved. A Donsker theorem for $p$-means on manifolds also exists \citep{Donsker}. In a different direction, a CLT for Fr\'echet means was recently provided for stratified spaces \citep{Mattingly} (stratified spaces are unions and intersections of manifolds).

Tree-valued data provide one of the main non-manifold motivations for the present work.  There are several inequivalent geometries on spaces of trees; the application in this paper focuses specifically on the polyhedral space ${\mathcal T}_n$ of ultrametrics on $n$ labelled leaves, equivalently rooted equidistant phylogenetic trees, naturally represented by their pairwise leaf distances.  Its projectivized slice $\widetilde{\mathcal T}_n$ is a finite piecewise-linear stratified space described by laminar families of clusters; it is closely related to tropical and Bergman-fan models of phylogenetic tree space \citep{ArdilaKlivans2006, SpeyerSturmfels2004TropicalGrassmannian, SpeyerSturmfels2009TropicalMathematics, MonodLinYoshidaKang2018}.  This geometry is stratified, polyhedral, and finite-dimensional, but it is not treated here through nonpositive-curvature methods.  The broader asymptotic theory for singular tree and open-book spaces includes \citet{BardenLeOwen, BardenLeOwen2018, HHLMNO}, while \citet{Nye2011, NyeTangWeyenbergYoshida2017, StJohn2017} discuss visualization and dimension reduction in tree spaces.

All strong law results for generalized Fr\'echet means discussed above consider the convergence of a single loss and do not address joint consistency across a class of losses. This gap is precisely the main theoretical contribution of the present paper, providing a uniform strong-law theory for generalized Fr\'echet means under transparent and minimal structural conditions on the loss class and the sample space.

\section{Generalized Fr\'echet Means}\label{SSGenFre}
We now formalize the population and empirical minimization problems associated with the loss $\phi\circ\dd$. The point is to identify a class of losses broad enough to contain the standard centres used in statistics, while still allowing for existence, compactness, and consistency arguments on general metric sample spaces.

Throughout the paper, $(\X,\dd)$ is a metric space with the Heine--Borel property, i.e., where every closed and bounded subset is compact. Every complete, locally compact, length metric space has this property by the Hopf--Rinow theorem, stated as Theorem 2.5.28 in \citet{Gromov}; in particular, every complete connected Riemannian manifold with its geodesic metric does \citep{Lee3,doCarmo}. A Heine--Borel metric space is $\sigma$-compact and separable, so its Borel probability measures are tight. We write $\Sigma_\dd$ for the Borel $\sigma$-algebra on $\X$ and $\mathcal P(\X)$ for the set of Borel probability measures on $\X$.

We allow a class of monotone convex distance transforms that contains the Fr\'echet mean, the geometric median, and the power means. Namely, the class of admissible losses is
$$\Phi:=\{\phi:\R_+\to\R_+\text{ continuous, strictly increasing, convex, with }\phi(0)=0\},$$
equivalently the set of convex bijections of $\R_+:=[0,\infty)$.

\begin{definition}
Given $\mu\in\mathcal P(\X)$ and $\phi\in\Phi$, we define the $\phi$-mean of $\mu$ as
$$\mathfrak m^\phi_\mu:=\argmin_{x\in\X}F^\phi_\mu(x),$$
where $F^\phi_\mu:\X\to[0,\infty]$ is the $\phi$-loss function of $\mu$,
$$F^\phi_\mu(x):=\int_{\X}\phi(\dd(x,y))\mu(\ddd y).$$
\end{definition}
The choices $\phi(t)=t^2$ and $\phi(t)=t$ recover the Fr\'echet mean and the geometric median. We drop $\mu$ or $\phi$ from the notation when these are clear from context, and we identify $\m^\phi_\mu$ with its unique element when it is a singleton.
Thus $F^\phi_\mu$ is the population risk associated with the loss $\phi\circ\dd$, and $\m^\phi_\mu$ is its minimizer set. The possibility that $\m^\phi_\mu$ is set-valued is intrinsic to the theory and is crucially reflected in the consistency statements below.

Given a sample from $\mu$, the natural estimator of the population risk is its empirical plug-in version, and the resulting estimator of the centre is the empirical argmin set.
Concretely, given $\mu\in\mathcal P(\X)$ and an i.i.d.\ sample $X_1,X_2,\ldots$ of random elements from $\mu$ on $(\boldsymbol\Omega,\boldsymbol\Sigma,\boldsymbol\mu):=(\Omega^\infty,\Sigma^{\otimes\infty},\mu^{\otimes\infty})$, the empirical measure, loss and $\phi$-mean are respectively given by
$$\hat\mu_n:=\frac{1}{n}\sum_{j=1}^n\delta_{X_j},\qquad F^\phi_n(x):=\frac{1}{n}\sum_{j=1}^n\phi(\dd(X_j,x)),\qquad \m^\phi_n:=\argmin_{x\in\X}F^\phi_n(x).$$
In what follows, we write ``almost surely'' for $\boldsymbol\mu$-almost surely.

In Figure \ref{fig:conceptual-tikz-figures}, we illustrate several ways in which $\phi$-mean sets can fail to be singletons. The examples illustrate phenomena that will matter later in the paper: multiple minimizing geodesics between two observations, positive-dimensional minimizer sets, and non-uniqueness on polyhedral spaces.

\begin{figure}[!htbp]
  \centering
  \begin{minipage}{0.40\textwidth}
  \centering
    \includegraphics[width=0.7\textwidth]{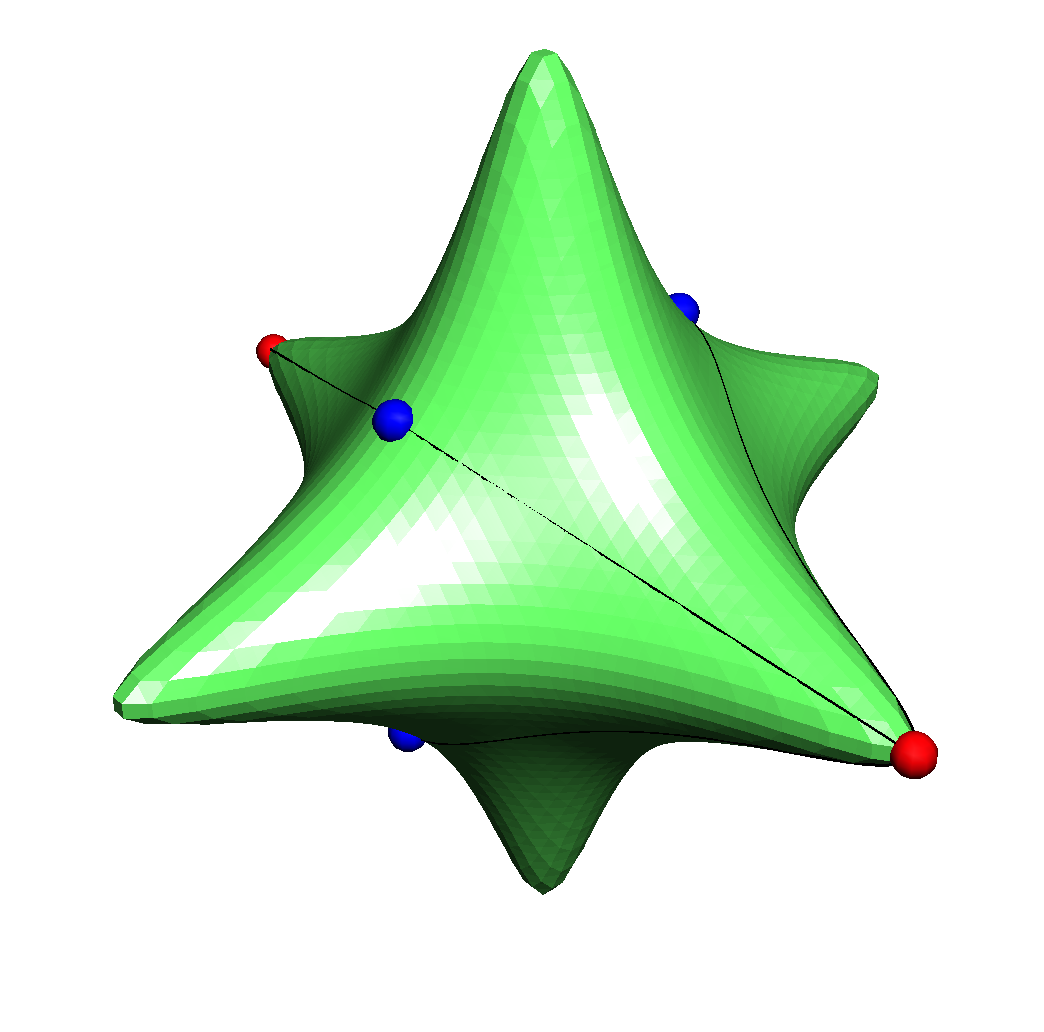}
    \\[1mm]{\small (a) The surface $\frac{x^2}{1+x^2}+\frac{y^2}{1+y^2}+\frac{z^2}{1+z^2}=0.85$ with a uniform distribution supported on the two red dots, its four minimizing geodesics and their middle points corresponding to all strictly convex $\phi$-means.}
  \end{minipage}\hfill
  \begin{minipage}{0.55\textwidth}
  \centering
    \resizebox{\textwidth}{!}{\tikzsphsetvalued}
    \\[1mm]{\small (b) Set-valued $\phi$-means on $\mathbb S^2$: equator and antipodal poles.}
  \end{minipage}\\
  \begin{minipage}{0.49\textwidth}
  \centering
    \tdplotsetmaincoords{68}{118}
    \centering
    \begin{tikzpicture}[
    tdplot_main_coords,
    scale=1.10,
    line join=round,
    line cap=round,
    ico face/.style={fill=cyan!25!white, fill opacity=0.20, draw=none},
    ico edge/.style={draw=gray!70, line width=0.55pt},
    geod/.style={draw=black, line width=0.35pt},
    antipode/.style={circle, fill=red, inner sep=1.8pt},
    midpoint/.style={circle, fill=blue, inner sep=1.35pt}
]
\def\R{1.7013016167}
\def\h{0.8506508084}
\def\H{1.9021130326}
\coordinate (A) at (0,0,\H);
\coordinate (B) at (0,0,-\H);
\foreach \k in {0,...,4} {
    \pgfmathsetmacro{\angU}{90 + 72*\k}
    \pgfmathsetmacro{\angL}{90 + 36 + 72*\k}
    \coordinate (U\k) at ({\R*cos(\angU)},{\R*sin(\angU)},\h);
    \coordinate (L\k) at ({\R*cos(\angL)},{\R*sin(\angL)},-\h);
}
\foreach \k in {0,...,4} {
    \pgfmathtruncatemacro{\kp}{mod(\k+1,5)}
    \fill[ico face] (A) -- (U\k) -- (U\kp) -- cycle;
    \fill[ico face] (U\k) -- (L\k) -- (U\kp) -- cycle;
    \fill[ico face] (L\k) -- (L\kp) -- (U\kp) -- cycle;
    \fill[ico face] (B) -- (L\kp) -- (L\k) -- cycle;
}
\foreach \k in {0,...,4} {
    \pgfmathtruncatemacro{\kp}{mod(\k+1,5)}
    \draw[ico edge] (A) -- (U\k);
    \draw[ico edge] (U\k) -- (U\kp);
    \draw[ico edge] (U\k) -- (L\k);
    \draw[ico edge] (U\kp) -- (L\k);
    \draw[ico edge] (L\k) -- (L\kp);
    \draw[ico edge] (B) -- (L\k);
}
\foreach \k in {0,...,4} {
    \pgfmathtruncatemacro{\kp}{mod(\k+1,5)}
    \pgfmathtruncatemacro{\km}{mod(\k+4,5)}
    \coordinate (Pa\k) at ($(U\k)!0.3333333333!(U\kp)$);
    \coordinate (Ma\k) at ($(U\k)!0.5!(L\k)$);
    \coordinate (Qa\k) at ($(L\k)!0.3333333333!(L\km)$);
    \draw[geod] (A) -- (Pa\k) -- (Ma\k) -- (Qa\k) -- (B);
    \coordinate (Pb\k) at ($(U\k)!0.6666666667!(U\kp)$);
    \coordinate (Mb\k) at ($(L\k)!0.5!(U\kp)$);
    \coordinate (Qb\k) at ($(L\k)!0.3333333333!(L\kp)$);
    \draw[geod] (A) -- (Pb\k) -- (Mb\k) -- (Qb\k) -- (B);
}
\foreach \k in {0,...,4} {
    \node[midpoint] at (Ma\k) {};
    \node[midpoint] at (Mb\k) {};
}
\node[antipode,label={[red!80!black]above:$A$}] at (A) {};
\node[antipode,label={[red!80!black]below:$B$}] at (B) {};
\end{tikzpicture}
    \\[1mm]{\small (c) Geodesics and $\phi$-means between two antipodal points on a regular icosahedron.}
  \end{minipage}\hfill
  \begin{minipage}{0.49\textwidth}
    \centering
\tdplotsetmaincoords{70}{120}
\begin{tikzpicture}[
    tdplot_main_coords,
    scale=1.3,
    line join=round,
    line cap=round,
    cube face/.style={fill=cyan!25!white, fill opacity=0.18, draw=none},
    cube edge/.style={draw=gray!70, line width=0.6pt},
    geod/.style={draw=black, line width=1.1pt},
    antipode/.style={circle, fill=red, inner sep=1.7pt},
    midpoint/.style={circle, fill=blue, inner sep=1.4pt}
]
\coordinate (O)   at (0,0,0);
\coordinate (X)   at (2,0,0);
\coordinate (Y)   at (0,2,0);
\coordinate (XY)  at (2,2,0);
\coordinate (Z)   at (0,0,2);
\coordinate (XZ)  at (2,0,2);
\coordinate (YZ)  at (0,2,2);
\coordinate (XYZ) at (2,2,2);
\coordinate (M1) at (1,2,0);
\coordinate (M2) at (2,1,0);
\coordinate (M3) at (2,0,1);
\coordinate (M4) at (1,0,2);
\coordinate (M5) at (0,1,2);
\coordinate (M6) at (0,2,1);
\fill[cube face] (O) -- (X) -- (XY) -- (Y) -- cycle;      
\fill[cube face] (O) -- (X) -- (XZ) -- (Z) -- cycle;      
\fill[cube face] (O) -- (Y) -- (YZ) -- (Z) -- cycle;      
\fill[cube face] (X) -- (XY) -- (XYZ) -- (XZ) -- cycle;   
\fill[cube face] (Y) -- (XY) -- (XYZ) -- (YZ) -- cycle;   
\fill[cube face] (Z) -- (XZ) -- (XYZ) -- (YZ) -- cycle;   
\draw[cube edge] (O)--(X)--(XY)--(Y)--cycle;
\draw[cube edge] (Z)--(XZ)--(XYZ)--(YZ)--cycle;
\draw[cube edge] (O)--(Z);
\draw[cube edge] (X)--(XZ);
\draw[cube edge] (Y)--(YZ);
\draw[cube edge] (XY)--(XYZ);
\draw[geod] (O) -- (M1) -- (XYZ);
\draw[geod] (O) -- (M2) -- (XYZ);
\draw[geod] (O) -- (M3) -- (XYZ);
\draw[geod] (O) -- (M4) -- (XYZ);
\draw[geod] (O) -- (M5) -- (XYZ);
\draw[geod] (O) -- (M6) -- (XYZ);
\node[antipode,label={[red!80!black]below left:$A$}] at (O)   {};
\node[antipode,label={[red!80!black]above right:$B$}] at (XYZ) {};
\node[midpoint] at (M1) {};
\node[midpoint] at (M2) {};
\node[midpoint] at (M3) {};
\node[midpoint] at (M4) {};
    \node[midpoint] at (M5) {};
    \node[midpoint] at (M6) {};
\end{tikzpicture}
    \\[1mm]{\small (d) Geodesics and $\phi$-means between two antipodal points on a regular cube.}
  \end{minipage}
\caption{Conceptual examples used throughout the discussion of generalized Fr\'echet means: nonuniqueness on a smooth surface, set-valued means on the sphere, and multiple midpoint configurations on polyhedra.}
\label{fig:conceptual-tikz-figures}
\end{figure}

\begin{remark}\label{rmkConvexity}
For every $\phi\in\Phi$ and $\mu\in\mathcal P(\X)$, $\m^\phi_\mu$ is a measure of centrality of $\mu$: we require $\phi$ increasing so that $F^\phi_\mu(x)$ grows as $x$ leaves the ``centre'' of $\mu$, and $\phi(0)=0$ is a harmless normalization. Convexity is used in every other proof, through monotonicity of secants and the midpoint inequality (both of which characterize convexity for continuous functions), and we therefore keep it as a standing assumption throughout. The exception is Theorem \ref{symm}, whose proof uses only the strict monotonicity of $\phi$ and is stated below for that strictly weaker hypothesis.
\end{remark}

The next observation shows that the population target respects symmetries of the sampling law, which is a basic mechanism used repeatedly in the sequel.
\begin{proposition}[Equivariance under isometries]\label{equivariance}
Let $(\X,\dd_\X)$ and $(\Y,\dd_\Y)$ be metric spaces and let $T:\X\to\Y$ be an isometry (i.e.\ a distance-preserving bijection). Then for every $\phi\in\Phi$ and every $\mu\in\mathcal P(\X)$,
$$F^\phi_{T_\ast\mu}\circ T=F^\phi_\mu \qquad\text{and}\qquad \m^\phi_{T_\ast\mu}=T(\m^\phi_\mu).$$
\end{proposition}
\begin{proof}
For any $x\in\X$,
\begin{align*}F^\phi_{T_\ast\mu}(Tx)&=\int_\Y\phi(\dd_\Y(Tx,y))\,(T_\ast\mu)(\ddd y)\\
&=\int_\X\phi(\dd_\Y(Tx,Tz))\,\mu(\ddd z)=\int_\X\phi(\dd_\X(x,z))\,\mu(\ddd z)=F^\phi_\mu(x),\end{align*}
where the second equality is the change of variables $y=Tz$ and the third uses that $T$ is an isometry. Taking the argmin on both sides and using that $T$ is a bijection gives $\m^\phi_{T_\ast\mu}=T(\m^\phi_\mu)$.
\end{proof}

Two corollaries follow at once. If $T:\X\to\X$ is an isometry of $\X$ and $\mu$ is $T$-invariant, then $\m^\phi_\mu$ is itself $T$-invariant. If a group $G$ acts on $\X$ by isometries and $\mu$ is $G$-invariant, then $\m^\phi_\mu$ is a $G$-invariant subset of $\X$; if in addition $G$ acts transitively, then $\m^\phi_\mu=\X$ for every $G$-invariant $\mu$ with $F^\phi_\mu$ finite. Thus, for invariant laws on spaces with a transitive isometry group, the $\phi$-mean set is forced by symmetry alone. In particular, this anticipates the tight diameter example of Theorem \ref{diametro}; indeed, equality there is not special to $S^1$, to the geometric median, or to the Haar measure, but is a consequence of equivariance alone.

The consistency theory requires a growth restriction on $\phi$ when $\X$ is unbounded. Without such a restriction, the objective $F^\phi_\mu$ may be finite only on a proper subset of $\X$, which obstructs a general statistical theory of the population and empirical minimizers.
\begin{definition}Define the two growth characteristics
$$\PhiEx:=\{\phi\in\Phi:\gamma_\phi<\infty\},\qquad \gamma_\phi:=\sup_{x\in\R_+}\frac{\phi(x+1)}{\phi(1)+\phi(x)}.$$
\end{definition}
The possibly infinite number $\gamma_\phi\in[1,\infty]$ controls the growth of $\phi$: by Lemma \ref{dominion}, $\phi\in\PhiEx$ if and only if $\phi$ is dominated by an exponential function, so we refer to $\PhiEx$ as the class of at most exponentially growing functions. Examples include $\phi(t)=t^p$ for $p\geq 1$ (with $\gamma_\phi=2^{p-1}$) and $\phi(t)=(a^t-1)/(a-1)$ for $a>1$ (with $\gamma_\phi=a$); further properties of $\PhiEx$, including alternative characterisations and stability under sums, products and positive scalar multiples, are collected in the appendix.

The next section investigates the convergence of the empirical minimizer sets towards their population counterparts almost surely, first for a fixed loss and then uniformly over a class of losses.

\section{Main Results}\label{SSCons}
For a fixed loss, the empirical $\phi$-mean can be seen as an $M$-estimator on the metric space $\X$, with population set-valued target $\m^\phi$. We first state the almost-sure consistency theorem for one loss, and then its uniform counterpart over an entire loss class.

Since the $\phi$-mean sets need not be singletons, we measure the convergence of $\m^\phi_n$ to $\m^\phi$ by the one-sided Hausdorff distance
$$\dd_\infty(A,B):=\sup_{a\in A}\inf_{b\in B}\dd(a,b)=\sup_{a\in A}\dd(a,B),$$
defined for compact subsets $A,B$ of $\X$. The functional $\dd_\infty$ is asymmetric, with $\dd_\infty(A,B)<\varepsilon$ in particular being equivalent to $A$ being contained in the open $\varepsilon$-thickening of $B$.
This asymmetric notion is the natural one here. When $\m^\phi$ is not a singleton, the statistical requirement is that empirical minimizers become asymptotically trapped near the population minimizer set; one does not require every population minimizer to be represented by the empirical argmin set at each finite sample size.

\begin{theorem}\label{Cons}
Let $(\X,\dd)$ be a metric space with the Heine--Borel property, $\phi\in\PhiEx$, and $\mu\in\mathcal P(\X)$ be such that $F^\phi_\mu$ is finite somewhere. Then, as $n\to\infty$,
$$\dd_\infty\left(\m^\phi_n,\m^\phi\right)\overset{\boldsymbol\mu-\mathrm{as}}{\to}0.$$
\end{theorem}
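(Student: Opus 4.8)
The plan is to establish consistency through a standard M-estimation / $\Gamma$-convergence argument, adapted to the set-valued nature of $\m^\phi$ and $\m^\phi_n$ and to the noncompactness of $\X$. First I would fix $\bo$ outside a $\boldsymbol\mu$-null set on which the strong law of large numbers gives pointwise convergence $F^\phi_n(x)\to F^\phi_\mu(x)$ for all $x$ in a fixed countable dense subset of $\X$ (and simultaneously $\hat\mu_n\to\mu$ weakly, by separability). The key analytic inputs are Lemmas \ref{lemFinite} and \ref{Lipschitz}: since $\phi\in\Phi_{ex}$ and $F^\phi_\mu$ is finite somewhere, $F^\phi_\mu$ is finite everywhere and locally Lipschitz, and the same uniform-in-$x$ Lipschitz control should hold for $F^\phi_n$ with a constant governed by $\gamma_\phi$ and the empirical first moments $\frac1n\sum_j\phi(\dd(X_j,x_0))$, which converge by the SLLN. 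Equicontinuity on compacta plus pointwise convergence on a dense set then upgrades to \emph{locally uniform} convergence $F^\phi_n\to F^\phi_\mu$.

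Next I would pin down a compact ``search region.'' Pick any $x_0$ and a radius $R$ large enough that $\inf_{x}F^\phi_\mu(x)+1 < \inf_{\dd(x,x_0)\ge R}F^\phi_\mu(x)$; such an $R$ exists because the subexponential growth forces $F^\phi_\mu(x)\to\infty$ as $\dd(x,x_0)\to\infty$ (this coercivity is what makes the argmin set nonempty and bounded — presumably recorded via the diameter bound in the abstract). By locally uniform convergence on the ball $B(x_0,2R)$, for $n$ large the empirical minimizer over $B(x_0,2R)$ has value close to $\min F^\phi_\mu$, hence strictly below the values of $F^\phi_n$ on the sphere $\{\dd(x,x_0)=R\}$ (again by uniform convergence there), so the \emph{global} minimizer $\m^\phi_n$ is trapped inside $B(x_0,R)$ eventually. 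The Heine–Borel property makes this closed ball compact, which is exactly where we need it.

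On that fixed compact set the argument is the classical one. Suppose, for contradiction, $\rho_\infty(\m^\phi_n,\m^\phi)\not\to0$ along a subsequence: then there are $a_n\in\m^\phi_n$ with $\dd(a_n,\m^\phi)\ge\varepsilon$; by compactness extract $a_n\to a$ with $\dd(a,\m^\phi)\ge\varepsilon$, so $F^\phi_\mu(a) > m:=\min F^\phi_\mu$. But $F^\phi_n(a_n)\le F^\phi_n(x^\star)$ for a true minimizer $x^\star\in\m^\phi$, and uniform convergence gives $F^\phi_n(x^\star)\to m$ while $F^\phi_n(a_n)\to F^\phi_\mu(a)$ (combining $a_n\to a$ with the equi-Lipschitz bound and pointwise convergence at $a$). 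Hence $F^\phi_\mu(a)\le m$, a contradiction. This yields $\rho_\infty(\m^\phi_n,\m^\phi)\to0$ on the chosen full-measure event.

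The main obstacle I anticipate is the noncompactness bookkeeping: one must verify the coercivity of $F^\phi_\mu$ and the uniform-in-$n$ tail control of $F^\phi_n$ carefully, since a priori the empirical minimizers could escape to infinity before $n$ is large. Making the constant $R$ work \emph{simultaneously} with the SLLN exceptional-null-set choice — i.e.\ checking that the finitely many ``bad events'' (failure of SLLN at the dense points, failure of convergence of the moment that bounds the Lipschitz constant, and failure of the tail/coercivity estimate) together have measure zero — is the delicate step; everything on the resulting fixed compact set is then routine. A secondary point of care is that $\rho_\infty$ is one-sided, so only the inclusion $\m^\phi_n\subseteq(\m^\phi)^{\varepsilon}$ must be shown, which is precisely what the contradiction argument above delivers; the reverse inclusion is not claimed.
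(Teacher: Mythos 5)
Your overall skeleton matches the paper's: almost-sure uniform convergence of $F^\phi_n$ to $F^\phi_\mu$ on compact sets (obtained, as in the paper's Lemma \ref{11}, from the equi-Lipschitz bound driven by $\gamma_\phi$ and the empirical moment together with the SLLN at a net of points), then a localization of the empirical minimizers to a fixed compact set, then the routine comparison of values on $\m^\phi$ versus on the compact set of points $\varepsilon$-far from $\m^\phi$. The one-sidedness remark about $\rho_\infty$ is also correct.

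The genuine gap is in your localization step. You trap $\m^\phi_n$ inside $B(x_0,R)$ by noting that $F^\phi_n$ is large on the sphere $\{\dd(x,x_0)=R\}$ and small near a population minimizer. That inference is invalid in a general Heine--Borel metric space: $F^\phi_n$ is not convex or quasi-convex in $x$, so large values on a sphere do not prevent the global empirical minimizer from sitting at distance much greater than $R$ from $x_0$; locally uniform convergence on compacta gives you no control whatsoever on $F^\phi_n$ outside every compact set, which is exactly where the minimizer could a priori escape. What is needed is a lower bound on $F^\phi_n(y)$ valid simultaneously for \emph{all} $y$ outside some fixed compact set and all large $n$, almost surely. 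You flag ``uniform-in-$n$ tail control'' as the delicate point but never supply it, and the mechanism you do sketch cannot produce it. The paper's Lemma \ref{12} closes precisely this hole: choose a compact $B$ with $\mu(B)>3/4$, let $A_k$ be its $k$-thickening, and bound $F^\phi_n(y)\geq \frac{|\{j\leq n:X_j\in B\}|}{n}\,\phi(k)$ for $y\notin A_k$; the SLLN applied to the indicators $\mathbf 1_{\{X_j\in B\}}$ makes the prefactor exceed $1/2$ eventually, and $\phi(k)\to\infty$ does the rest. With that estimate inserted in place of your sphere argument, the remainder of your proof (equi-Lipschitz uniform convergence on the resulting compact set and the subsequence/contradiction step) goes through and coincides in substance with the paper's proof.
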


We note that, in particular, any measurable selection $\xi_n\in\m^\phi_n$ satisfies $\dd(\xi_n,\m^\phi)\to 0$ almost surely.
Thus, the theorem controls not only the set-valued estimator $\m_n^\phi$ itself, but also every measurable empirical representative chosen from it.

Uniformity in $\phi$ requires two additional forms of control. One must prevent the objectives from developing different tail behaviour as $\phi$ varies, and one must control the oscillation of the loss class on bounded distance ranges.
The second consistency result is uniform in $\phi$ over a class $\Psi\subseteq\PhiEx$. We call $\Psi$ \emph{dominated from above} if
\begin{align*}
\sup_{\phi\in\Psi}\gamma_\phi<\infty
\end{align*}
and there exists an envelope $\psi\in\PhiEx$ such that $\phi(t)\leq\psi(t)$ for every $\phi\in\Psi$ and $t\geq0$.  The envelope condition follows, for example, from Lemma \ref{dominion} when both $\sup_{\phi\in\Psi}\gamma_\phi$ and $\sup_{\phi\in\Psi}\phi(1)$ are finite.  We call $\Psi\subseteq\Phi_{ex}$ \emph{compact} if it is compact in $\Phi_{ex}$ equipped with the compact-open topology. A canonical example is $\{t\mapsto t^p:p\in[1,P]\}$ for any fixed $P\geq 1$, which is dominated for instance by the envelope $t+t^P$.
\begin{theorem}\label{UnifCons}
Let $(\X,\dd)$ be a metric space with the Heine--Borel property, $\mu\in\mathcal P(\X)$, and $\Psi\subseteq\PhiEx$ a non-empty compact family dominated from above by an envelope $\psi\in\PhiEx$ such that $F^\psi_\mu$ is finite at one, hence every, point of $\X$.  Assume that $F^\phi_\mu$ has a unique minimizer for every $\phi\in\Psi$. Then as $n\to\infty$,
$$\sup_{\phi\in\Psi}\dd_{\infty}(\m_n^\phi,\m^\phi)\overset{\boldsymbol\mu-\mathrm{as}}{\to}0.$$
\end{theorem}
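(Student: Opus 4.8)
The plan is to bootstrap the single-$\phi$ consistency of Theorem \ref{Cons} to a uniform statement by a covering-and-equicontinuity argument, exploiting the compactness of $\Psi$ in the topology of uniform convergence on compacta together with the domination hypothesis. First I would record the qualitative consequences of domination: since $\sup_{\phi\in\Psi}\gamma_\phi<\infty$ and $\sup_{\phi\in\Psi}\phi(1)<\infty$, Lemma \ref{dominion} furnishes a single $\psi\in\Phi_{ex}$ with $\phi(t)<\psi(t)$ for all $\phi\in\Psi$ and all $t\geq 0$; consequently $F^\phi_\mu\leq F^\psi_\mu$ pointwise, all the loss functions $F^\phi_\mu$ are finite (given $F^\psi_\mu$ is finite somewhere, which follows from the hypothesis that each $F^\phi_\mu$ is finite), and—crucially—by Lemma \ref{Lipschitz} the family $\{F^\phi_\mu:\phi\in\Psi\}$ is uniformly Lipschitz on each bounded set, with a modulus controlled by $\gamma_\psi$, $\psi(1)$ and the $\psi$-moment of $\mu$. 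The same bound applies uniformly to the empirical losses $F^\phi_n$. A second preliminary step is a uniform localization: there is a bounded (hence, by Heine-Borel, compact) set $K\subseteq\X$ and an $N$ such that, almost surely for $n\geq N$, every minimizer of $F^\phi_n$ and of $F^\phi_\mu$, for every $\phi\in\Psi$, lies in $K$. This uses the coercivity estimate behind Theorem \ref{Cons} applied with the dominating $\psi$, so that the radius of $K$ can be chosen independent of $\phi$.

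Next I would set up the comparison between $F^\phi_n$ and $F^\phi_\mu$ uniformly in $\phi$. On the fixed compact $K$, and restricting the distance argument to the compact set $[0,\operatorname{diam}(K')+1]$ for a slightly larger $K'$, the map $\phi\mapsto\phi|_{[0,k]}$ ranges over a compact subset of $C([0,k])$ by hypothesis; pick a finite $\varepsilon$-net $\phi_1,\dots,\phi_m$ in sup norm. For each fixed $\phi_i$, Theorem \ref{Cons} gives $\sup_{x\in K}|F^{\phi_i}_n(x)-F^{\phi_i}_\mu(x)|\to 0$ a.s.\ (in fact the proof of Theorem \ref{Cons} should already yield uniform-on-compacta convergence of the losses, not merely of the argmin; I would cite that intermediate fact). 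For a general $\phi\in\Psi$, choosing $\phi_i$ with $\|\phi-\phi_i\|_{\infty,[0,k]}<\varepsilon$, we get $\sup_{x\in K}|F^\phi_n(x)-F^\phi_\mu(x)|\leq \sup_{x\in K}|F^{\phi_i}_n(x)-F^{\phi_i}_\mu(x)| + \sup_{x\in K}|F^\phi_n(x)-F^{\phi_i}_n(x)| + \sup_{x\in K}|F^\phi_\mu(x)-F^{\phi_i}_\mu(x)|$, and the last two terms are each at most $\varepsilon$ uniformly (they are averages over distances lying in $[0,k]$ of $|\phi-\phi_i|\leq\varepsilon$). Taking first $n\to\infty$ over the finite net and then $\varepsilon\to 0$ yields
$$\sup_{\phi\in\Psi}\ \sup_{x\in K}\bigl|F^\phi_n(x)-F^\phi_\mu(x)\bigr|\ \overset{as}{\to}\ 0.$$

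From uniform convergence of the losses to the conclusion about argmins I would argue by contradiction in the standard M-estimation style, but tracking uniformity in $\phi$. Suppose $\sup_{\phi\in\Psi}\rho_\infty(\m_n^\phi,\m^\phi)\not\to 0$ on a set of positive measure; then along a subsequence there are $\phi_n\in\Psi$ and $\xi_n\in\m_n^{\phi_n}\cap K$ with $\rho(\xi_n,\m^{\phi_n})\geq\delta>0$. By compactness of $\Psi$ (uniform convergence on compacta, via a diagonal argument across an exhausting sequence of compacta) and of $K$, pass to a further subsequence with $\phi_n\to\phi_\infty\in\Psi$ uniformly on compacta and $\xi_n\to\xi_\infty\in K$. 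One checks $F^{\phi_\infty}_\mu(\xi_\infty)=\lim F^{\phi_n}_n(\xi_n)\leq\lim F^{\phi_n}_n(m^{\phi_\infty})=F^{\phi_\infty}_\mu(m^{\phi_\infty})=\min F^{\phi_\infty}_\mu$, using the uniform loss convergence plus the uniform Lipschitz bound to absorb the change from $\phi_n$ to $\phi_\infty$; hence $\xi_\infty$ is a minimizer of $F^{\phi_\infty}_\mu$, which by the uniqueness hypothesis equals the single point $m^{\phi_\infty}$. But also $\rho(\xi_\infty,m^{\phi_\infty})\geq\delta$ (the minimizer map $\phi\mapsto m^\phi$ is continuous on $\Psi$ by the same subsequence argument, so $m^{\phi_n}\to m^{\phi_\infty}$ and the lower bound persists), a contradiction.

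The main obstacle I anticipate is not the net argument but making the passage $\phi_n\to\phi_\infty$ interact cleanly with the loss convergence: one needs that $F^{\phi_n}_n(\xi_n)\to F^{\phi_\infty}_\mu(\xi_\infty)$, which requires simultaneously the uniform-in-$\phi$ empirical-to-population convergence established above, joint continuity of $(\phi,x)\mapsto F^\phi_\mu(x)$ on $\Psi\times K$, and the uniform integrability needed to pass limits inside the integral when only $\phi_n\to\phi_\infty$ uniformly on compacta (not globally)—here the global domination $\phi_n<\psi$ with $F^\psi_\mu$ finite is exactly what rescues the tails. I would isolate this as a lemma: under domination and compactness of $\Psi$, the map $(\phi,x)\mapsto F^\phi_\mu(x)$ is continuous on $\Psi\times\X$ and $\phi\mapsto\m^\phi_\mu$ is upper hemicontinuous, reducing to a single point under the uniqueness assumption.
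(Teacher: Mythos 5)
The one step that fails as written is your $\varepsilon$-net comparison on the compact set $K$. You pick the net $\phi_1,\dots,\phi_m$ with respect to the sup norm on $[0,k]$, $k\approx\diam(K')$, and then assert that $\sup_{x\in K}|F^\phi_n(x)-F^{\phi_i}_n(x)|$ and $\sup_{x\in K}|F^\phi_\mu(x)-F^{\phi_i}_\mu(x)|$ are at most $\varepsilon$ because ``they are averages over distances lying in $[0,k]$''. That is not true: the relevant arguments are $\dd(x,X_j)$ and $\dd(x,y)$ with $y$ ranging over the support of $\mu$, and the theorem does not assume $\mu$ compactly supported, so these distances are unbounded whenever $\X$ is unbounded and $\mu$ has unbounded support; closeness of $\phi$ and $\phi_i$ on $[0,k]$ gives no control there. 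To repair this you must truncate: on $\{\dd(x,\cdot)>a\}$ bound $|\phi-\phi_i|\leq 2\psi$ and show that the tail $\psi$-mass, both population and empirical, is small uniformly over $x\in K$ --- the population tail from $F^\psi_\mu<\infty$, the empirical tail from an SLLN for a truncated $\psi$-moment at a reference point (or a bracketing argument). This is exactly the content of the paper's Lemma \ref{glivenko}, which brackets the class $\{\phi\circ\dd(z,\cdot):\phi\in\Psi\}$ using monotonicity on $[0,a]$ and the envelope $\psi$ on $(a,\infty)$; it is the one place where the uniform statement requires genuinely new work beyond the single-$\phi$ Lemma \ref{11}, and it is precisely the step your argument elides. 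Relatedly, your parenthetical that finiteness of $F^\psi_\mu$ ``follows from the hypothesis that each $F^\phi_\mu$ is finite'' is backwards: $\phi\leq\psi$ gives $F^\phi_\mu\leq F^\psi_\mu$, so finiteness of the dominated losses does not imply finiteness of the dominating one, and your tail rescue (like the paper's) genuinely needs $F^\psi_\mu<\infty$.

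The rest of your plan is sound and runs parallel to the paper: your uniform localization is the paper's Lemma \ref{22}, and where you conclude by a subsequence/contradiction argument (compactness of $\Psi$ on compacta, joint continuity of $(\phi,x)\mapsto F^\phi_\mu(x)$, uniqueness forcing $\xi_\infty=\m^{\phi_\infty}$, continuity of $\phi\mapsto\m^\phi$), the paper instead extracts a uniform gap $\inf_{\phi\in\Psi}(m'_\phi-m_\phi)>0$ via Berge's maximum theorem and compactness of the family of restricted losses (Lemmas \ref{epi} and \ref{contt}); these two routes are interchangeable once the uniform loss convergence is available, and you correctly identify that global domination by $\psi$ is what controls the tails in the continuity-in-$\phi$ step. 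But until the net/tail gap above is filled, the central claim $\sup_{\phi\in\Psi}\sup_{x\in K}|F^\phi_n(x)-F^\phi_\mu(x)|\to 0$ almost surely is not established, so the proposal as it stands has a genuine hole at its key quantitative step.
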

Under the uniqueness assumption, the population set $\m^\phi$ is a singleton, so $\dd_\infty(\m_n^\phi,\m^\phi)$ represents the largest distance of an empirical minimizer from the population minimizer; if the empirical minimizer is also unique, this is the usual distance (on the appropriate space) between two points. For instance, Theorem \ref{symm} below provides a large class of measures satisfying the population uniqueness hypothesis. The result is then an empirical-process analogue, where the family of cost functions $\{\phi\circ\dd:\phi\in\Psi\}$ is Glivenko--Cantelli in a sense made precise in Lemma \ref{glivenko}, and the precise conclusion is a uniform one-sided Hausdorff convergence of the corresponding argmin sets.

The next example shows why the asymmetric Hausdorff formulation in Theorem \ref{Cons} is the correct one. Even for an elementary distribution on $\mathbb S^2$, the population target may be a positive-dimensional set, so consistency can only mean that the empirical argmin set approaches that population set.

\begin{proposition}[An exact equatorial mean set on $\mathbb S^2$]\label{prop:s2-equator-meanset}
Let $N,S\in\mathbb S^2$ be antipodal points and let
\begin{align*}
\mu_{\mathrm{eq}}=\tfrac12\delta_N+\tfrac12\delta_S.
\end{align*}
For the squared loss $\phi(t)=t^2$, the $\phi$-mean set of $\mu_{\mathrm{eq}}$ is exactly the equator
\begin{align*}
E=\{x\in\mathbb S^2:d(x,N)=d(x,S)=\pi/2\}.
\end{align*}
\end{proposition}
\begin{proof}
For $x\in\mathbb S^2$, write $\theta=d(x,N)\in[0,\pi]$. Since $d(x,S)=\pi-\theta$,
\begin{align*}
F(x)=\int_{\mathbb S^2} d(x,y)^2\,d\mu_{\mathrm{eq}}(y)
=\tfrac12\bigl(\theta^2+(\pi-\theta)^2\bigr).
\end{align*}
This depends only on $\theta$ and is minimized exactly at $\theta=\pi/2$, that is, exactly on the equator $E$.
\end{proof}
We anticipate Figure \ref{fig:s2-equator-selection}, which shows the corresponding empirical behaviour when one selects a representative point from the empirical minimizing latitude circle.

\begin{remark}[A complementary polar example]
If $\nu$ denotes the uniform probability measure on the equator, then for the same squared loss $\phi(t)=t^2$ the objective is rotationally invariant and is minimized at the two antipodal poles $N$ and $S$. Thus $\mathbb S^2$ already exhibits both kinds of non-uniqueness relevant for the present paper: a one-dimensional mean set (the equator for $\mu_{\mathrm{eq}}$) and a disconnected mean set (the pair $\{N,S\}$ for $\nu$). Both configurations are sketched in panel (b) of Figure \ref{fig:conceptual-tikz-figures}.
\end{remark}

\begin{figure}[!htbp]
  \centering
  \includegraphics[width=0.60\linewidth]{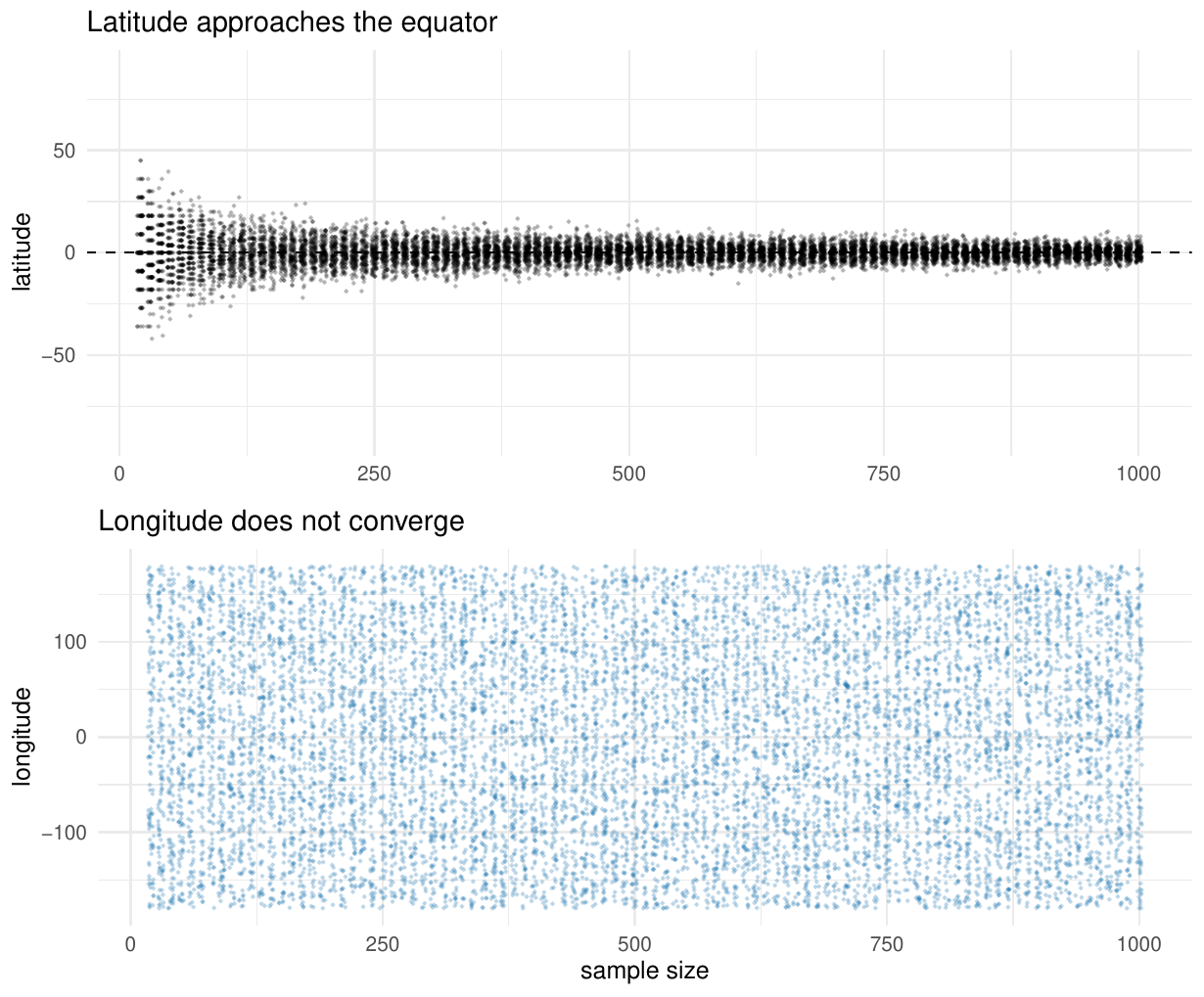}
  \caption{A set-valued consistency experiment on $\mathbb S^2$.  The underlying population distribution is $\mu_{\mathrm{eq}}=\frac12\delta_N+\frac12\delta_S$, whose squared-loss mean set is the equator by Proposition \ref{prop:s2-equator-meanset}.  For each sample size, we compute the empirical minimizing latitude circle and then select one representative point on that circle.  The latitude converges to $0$ (the equator), while the longitude does not stabilize, due to the set-valued nature of the limit.}
  \label{fig:s2-equator-selection}
\end{figure}

\section{Condition Verification and Proofs of Theorems \ref{Cons} and \ref{UnifCons}}\label{SSProofs}

The proofs are organized so that the probabilistic and deterministic parts are separated. We first show that the population criteria are finite, continuous, and sufficiently well-behaved to have compact argmin sets. We then state the key \emph{deterministic} argmin-convergence lemma (Lemma \ref{argmin}) that turns compact-local convergence of criteria into convergence of argmin sets. Finally, we verify its hypotheses, first for a fixed loss with index set $T=\{\phi\}$ for Theorem \ref{Cons}, and then uniformly over the class of losses with index set $T=\Psi$ for Theorem \ref{UnifCons}.

\subsection{Finiteness, Continuity, and Diameter}\label{SSRegularity}
We first establish that, when $\phi\in\PhiEx$, the loss $F^\phi_\mu$ is everywhere finite as soon as it is finite somewhere, is locally Lipschitz, and the $\phi$-mean set $\m^\phi_\mu$ is non-empty and compact with an explicit diameter bound. These results ensure the well-posedness required for the consistency theory. Equivalently, in the notation of Lemma \ref{argmin} below, they provide existence and compactness of the population argmin sets and the compact localization needed to keep empirical minimizers from escaping to infinity.

The arguments below use two basic inequalities for functions in $\PhiEx$, stated and proved in Lemma \ref{lemDifferences} of the appendix: for $\phi\in\PhiEx$ with $\gamma=\gamma_\phi$,
\begin{align}
\phi(x+h)-\phi(x)&\leq h\bigl(\gamma\phi(1)+(\gamma-1)\phi(x)\bigr) \quad\text{for } h\in(0,1],\label{eq:diff1}\\
2\phi(a/2)&\leq \phi(b)+\phi(c) \quad\text{whenever } a\leq b+c.\label{eq:diff2}
\end{align}
We also use repeatedly the elementary inequality: for any $\phi\in\Phi$ and $c\geq 1$, $t\geq 0$,
\begin{equation}\phi(ct)\geq c\phi(t),\label{eq:scaling}\end{equation}
which follows from convexity and $\phi(0)=0$.

\begin{lemma}\label{lemFinite}
Let $\X$ be a metric space with the Heine--Borel property.
If $\mu\in\mathcal P(\X)$ and $\phi\in\PhiEx$, then $F_\mu^\phi$ is finite somewhere iff it is finite everywhere. Conversely, if $\X$ is an unbounded, complete, locally compact, length metric space, $\phi\in\Phi$, and for every $\mu\in\mathcal P(\X)$, $F^\phi_\mu$ is finite everywhere iff it is finite somewhere, then $\phi\in\PhiEx$.
\end{lemma}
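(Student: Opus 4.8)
The plan is to prove the two directions separately. For the forward direction, fix $\phi\in\Phi_{ex}$ and $\mu\in\mathcal P(\X)$, and suppose $F^\phi_\mu(x_0)<\infty$ for some $x_0\in\X$. I want to show $F^\phi_\mu(x)<\infty$ for every $x\in\X$. The key tool is the growth bound built into $\gamma_\phi<\infty$: for any $s,t\ge 0$ one gets, by applying the definition of $\gamma_\phi$ after rescaling, an inequality of the shape $\phi(s+t)\le C_\phi(\phi(s)+\phi(t))$ for a constant $C_\phi$ depending only on $\gamma_\phi$ (this is presumably one of the "alternative characterizations'' proved in the appendix; I will invoke it). Then for arbitrary $x$, the triangle inequality gives $\rho(x,y)\le \rho(x,x_0)+\rho(x_0,y)$, and since $\phi$ is increasing and convex,
$$\phi(\rho(x,y))\le \phi\big(\rho(x,x_0)+\rho(x_0,y)\big)\le C_\phi\big(\phi(\rho(x,x_0))+\phi(\rho(x_0,y))\big).$$
Integrating in $y$ against $\mu$ yields $F^\phi_\mu(x)\le C_\phi\big(\phi(\rho(x,x_0))+F^\phi_\mu(x_0)\big)<\infty$, since $\rho(x,x_0)$ is a fixed finite number. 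This direction only uses the Heine–Borel property insofar as it guarantees finiteness of distances, so it is essentially immediate once the superadditivity-type bound is in hand; the Heine–Borel hypothesis is really there for the later lemmas.

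For the converse, I assume $\X$ is an unbounded, complete, locally compact length space, $\phi\in\Phi$, and that the dichotomy "finite somewhere $\iff$ finite everywhere'' holds for every $\mu\in\mathcal P(\X)$; I must deduce $\gamma_\phi<\infty$. I will argue by contraposition: suppose $\gamma_\phi=\infty$, and construct a measure $\mu$ for which $F^\phi_\mu$ is finite at one point but infinite at another, contradicting the hypothesis. The idea is to exploit unboundedness to find a geodesic ray (or at least points $y_k$ with $\rho(x_0,y_k)\to\infty$ chosen along a minimizing path, using the length-space and Hopf–Rinow structure so that there is a point $x_1$ with $\rho(x_1,y_k)=\rho(x_0,y_k)+\rho(x_0,x_1)$ for suitable configurations). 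Because $\gamma_\phi=\infty$, there is a sequence $x_k\to\infty$ with $\phi(1+x_k)/(\phi(1)+\phi(x_k))\to\infty$; rescaling, one produces pairs $(a_k,b_k)$ with $\phi(a_k+b_k)$ enormously larger than $\phi(a_k)+\phi(b_k)$. I then place atoms at a base point and at points $y_k$ along a ray at distances tuned to $b_k$, with masses $p_k$ chosen so that $\sum_k p_k\phi(b_k)<\infty$ (making $F^\phi_\mu$ finite at the "center'' $x_0$ near the ray's start) while $\sum_k p_k\phi(a_k+b_k)=\infty$ (making $F^\phi_\mu$ infinite at a point $x_1$ offset by roughly $a_k$ along the ray, where distances to $y_k$ are $\approx a_k+b_k$). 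Balancing these two series against the blow-up rate of $\phi$ is the crux.

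The main obstacle I anticipate is exactly this last construction: one needs the failure of the growth bound to be quantitatively strong enough, simultaneously at a whole sequence of scales, to defeat any summable weighting — a priori $\gamma_\phi=\infty$ only says the ratio is unbounded, i.e.\ large along some sequence, not that it grows fast in a controlled way. The fix is to first massage "$\gamma_\phi=\infty$'' into a usable form: extract scales $t_k\to\infty$ where the ratio exceeds, say, $4^k$, and then choose the atom masses $p_k\asymp \phi(b_k)^{-1}2^{-k}$ so that the "finite'' series is dominated by $\sum 2^{-k}$ while the "infinite'' series has $k$-th term $\gtrsim 4^k\cdot 2^{-k}=2^k\to\infty$. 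A secondary technical point is arranging the geometry — producing the ray and the offset point $x_1$ so that the triangle inequality is close to equality in the needed direction; here the length-space/local-compactness hypotheses (via Hopf–Rinow, as cited in the excerpt for the Heine–Borel property) supply minimizing geodesics, and unboundedness supplies a ray by the usual Arzelà–Ascoli limiting argument on geodesic segments of diverging length. I would carry these out in the order: (i) superadditivity bound and forward direction; (ii) geometric preliminaries (ray, offset point) for the converse; (iii) turning $\gamma_\phi=\infty$ into scales with a geometric blow-up rate; (iv) defining $\mu$ and verifying the two series estimates.
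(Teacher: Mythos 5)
Your overall route matches the paper's: the forward direction via the growth inequality for $\gamma_\phi$, and the converse by putting a measure on a geodesic ray (obtained from unboundedness, Hopf--Rinow and an Arzel\`a--Ascoli limit of long geodesics, exactly as the paper implicitly does by embedding an interval $[-7,\infty)$) whose weights decay like $1/\phi$ but concentrate on the scales where the defining ratio blows up; your atomic measure with masses $p_k\asymp 2^{-k}/\phi(t_k)$ is just a discretized version of the paper's density $g(x)/(1+\phi(x))$ with $g$ supported on the sets $A_n$, and your $4^k$-versus-$2^{-k}$ bookkeeping plays the role of the paper's $1/(n^2\lambda(A_n))$ weights. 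Both constructions work; yours is slightly more elementary (no Lebesgue density needed), the paper's handles all blow-up scales at once without extracting a sequence.

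Two points need repair. First, the inequality you invoke for the forward direction, $\phi(s+t)\le C_\phi\bigl(\phi(s)+\phi(t)\bigr)$ with $C_\phi$ depending only on $\gamma_\phi$, is false for general $\phi\in\Phi_{ex}$: for $\phi(t)=e^t-1$ one has $\gamma_\phi<\infty$ but $\phi(2t)/(2\phi(t))\to\infty$. What is true (Lemma \ref{powergamma} in the appendix) is that for each fixed $t$ the constant $\gamma_\phi^{(t)}=\sup_x\phi(x+t)/(\phi(x)+\phi(t))$ is finite, with a bound growing roughly like $(1+\gamma_\phi)^{\lceil t\rceil}$; since in your application one argument is the fixed number $\rho(x,x_0)$, this $t$-dependent constant suffices and your conclusion $F^\phi_\mu(x)\le C_{\phi,\rho(x,x_0)}\bigl(\phi(\rho(x,x_0))+F^\phi_\mu(x_0)\bigr)<\infty$ stands; this is exactly equivalent to the paper's induction in unit steps. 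Second, in the converse your talk of pairs $(a_k,b_k)$ and a point ``offset by roughly $a_k$'' cannot be taken literally: a single witness point $x_1$ forces the offset $\rho(x_0,x_1)$ to be one fixed number for all $k$. You must take the increment fixed, say equal to $1$ (which is what $\gamma_\phi=\infty$ gives you: $t_k\to\infty$ with $\phi(1+t_k)\ge 4^k(\phi(1)+\phi(t_k))$, where $t_k\to\infty$ because the ratio is continuous and bounded on compacts), place $x_1,x_0,y_k$ in order along the ray with $\rho(x_0,x_1)=1$ and $\rho(x_0,y_k)=t_k$, and then your mass choice does yield $F^\phi_\mu(x_0)<\infty$ and $F^\phi_\mu(x_1)=\infty$ after normalization. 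With these two corrections the plan is sound.
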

\begin{proof}
Suppose $\phi\in\PhiEx$ and $p,q\in\X$ are such that $F_\mu^\phi(p)<\infty$.  Put $\gamma=\gamma_\phi$ and $k:=\lceil\dd(p,q)\rceil$.  Iterating the inequality
\begin{align*}
\phi(a+1)\leq \gamma\{\phi(a)+\phi(1)\},
\end{align*}
with the convention used in \eqref{eq:gammak}, gives constants $C_k<\infty$ such that
\begin{align*}
\phi(a+k)\leq \gamma^k\phi(a)+C_k\qquad(a\geq0).
\end{align*}
Since $\dd(y,q)\leq\dd(y,p)+k$,
\begin{align*}
F_\mu^{\phi}(q)
\leq \int \phi(\dd(y,p)+k)\,\mu(\ddd y)
\leq \gamma^k F^\phi_\mu(p)+C_k<\infty.
\end{align*}
Thus, finiteness at one point implies finiteness everywhere.

Conversely, assume $\X$ is unbounded, complete, locally compact, and length, and let $\phi\in\Phi\setminus\PhiEx$. By the Hopf--Rinow theorem for locally compact length spaces, stated as Theorem 2.5.28 in \citet{Gromov}, $\X$ has the Heine--Borel property and contains a unit-speed geodesic ray $r:[0,\infty)\to\X$. We restrict the construction to this ray. Since $\gamma_\phi=\infty$ and
\begin{align*}
R(x):=\frac{\phi(1+x)}{\phi(1)+\phi(x)}
\end{align*}
is continuous, equals $1$ at $x=0$, and is unbounded, for every $n$ we may choose $x_n\ge0$ such that
\begin{align*}
\frac{\phi(1+x_n)}{\phi(1)+\phi(x_n)}\ge 2^{2n}.
\end{align*}
Place mass at the ray point of coordinate $1+x_n$:
\begin{align*}
\nu:=\sum_{n\ge1}2^{-n}\frac1{\phi(1)+\phi(x_n)}\delta_{r(1+x_n)},\qquad
c:=\nu(\X)\le \frac1{\phi(1)}<\infty,
\end{align*}
and set $\mu:=\nu/c$. At the point $r(1)$,
\begin{align*}
F^\phi_\nu(r(1))=\sum_{n\ge1}2^{-n}\frac{\phi(x_n)}{\phi(1)+\phi(x_n)}\le\sum_{n\ge1}2^{-n}<\infty,
\end{align*}
whereas at the point $r(0)$,
\begin{align*}
F^\phi_\nu(r(0))=\sum_{n\ge1}2^{-n}\frac{\phi(1+x_n)}{\phi(1)+\phi(x_n)}\ge\sum_{n\ge1}2^{n}=\infty.
\end{align*}
Thus, $F^\phi_\mu$ is finite somewhere but not everywhere, contradicting the assumed dichotomy. Hence $\phi\in\PhiEx$.
\end{proof}

For certain measures $\mu$, there may not exist any $\phi\in\PhiEx$ for which $F^\phi_\mu$ is finite. A typical example is $\X=\R$ and $\mu$ the standard Cauchy probability measure $\mu(\ddd x)=\ddd x/(\pi(1+x^2))$.

\begin{lemma}\label{Lipschitz}
Let $\X$ be a metric space with the Heine--Borel property and $\mu\in\mathcal P(\X)$. For all $\phi\in\PhiEx$, if $F_\mu^\phi$ is finite somewhere, then it is finite everywhere and locally Lipschitz continuous.
\end{lemma}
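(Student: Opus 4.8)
The plan is to use Lemma~\ref{lemFinite} to dispose of finiteness (if $F^\phi_\mu$ is finite somewhere it is finite everywhere), so that the entire content of the statement reduces to the local Lipschitz estimate, which in turn yields continuity for free. Fix a point $p\in\X$ with $F^\phi_\mu(p)<\infty$. I claim that for every $R>0$ there is a constant $L=L(p,R,\phi,\mu)<\infty$ such that
$$
\bigl|F^\phi_\mu(x)-F^\phi_\mu(x')\bigr|\le L\,\dd(x,x')\qquad\text{whenever } x,x'\in \bar B(p,R)\text{ and }\dd(x,x')\le 1 .
$$
Applying this with $R=\tfrac12$ gives a genuine Lipschitz bound on the ball $B(p,\tfrac12)$, i.e.\ a Lipschitz neighbourhood of every point, which is exactly local Lipschitz continuity.

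For the pointwise step, fix $y\in\X$ and put $s=\dd(x,y)$, $s'=\dd(x',y)$; by the triangle inequality $|s-s'|\le\dd(x,x')$ and $\max(s,s')\le\dd(p,y)+R$. Since $\phi$ is convex, its right derivative $\phi'_+$ exists, is finite and non-decreasing, the subgradient inequality gives $|\phi(s)-\phi(s')|\le\phi'_+(\max(s,s'))\,|s-s'|$, and monotonicity of difference quotients gives $\phi'_+(t)\le\phi(t+1)-\phi(t)\le\phi(t+1)$. Combining these with monotonicity of $\phi$ and $\phi'_+$,
$$
\bigl|\phi(\dd(x,y))-\phi(\dd(x',y))\bigr|\le \dd(x,x')\,\phi\!\bigl(\dd(p,y)+R+1\bigr).
$$
Now iterate the defining inequality $\phi(1+u)\le\gamma_\phi(\phi(1)+\phi(u))$ of $\Phi_{ex}$ a fixed number $k=\lceil R+1\rceil$ of times (equivalently, invoke the bound of Lemma~\ref{lemDifferences}) to get $\phi(\dd(p,y)+R+1)\le \gamma_\phi^{\,k}\phi(\dd(p,y))+k\gamma_\phi^{\,k}\phi(1)$ for all $y$. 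Integrating against $\mu$,
$$
\bigl|F^\phi_\mu(x)-F^\phi_\mu(x')\bigr|\le \dd(x,x')\Bigl(\gamma_\phi^{\,k}F^\phi_\mu(p)+k\gamma_\phi^{\,k}\phi(1)\Bigr),
$$
and the bracket is finite by hypothesis and $\phi\in\Phi_{ex}$; this is the desired $L$. Local Lipschitz continuity and hence continuity follow, and finiteness everywhere was already recorded.

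The only real subtlety, and the place where $\phi\in\Phi_{ex}$ is essential rather than cosmetic, is that the Lipschitz constant of $\phi$ at scale $s$, namely $\phi'_+(s)$, is itself unbounded as $s\to\infty$, so one cannot simply pull a constant out of the integral; instead one must dominate the difference quotient $y\mapsto\bigl|\phi(\dd(x,y))-\phi(\dd(x',y))\bigr|/\dd(x,x')$ by the single $\mu$-integrable function $y\mapsto \mathrm{const}\cdot\bigl(\phi(1)+\phi(\dd(p,y))\bigr)$, trading the growth of $\phi'_+$ against the growth bound $\gamma_\phi<\infty$ together with the assumed finiteness of $F^\phi_\mu$ at the single point $p$. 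Once that domination is in place, the rest is routine manipulation with the triangle inequality and the subgradient inequality for convex functions.
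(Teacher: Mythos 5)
Your proof is correct and follows essentially the same route as the paper's: finiteness everywhere via Lemma~\ref{lemFinite}, then domination of the difference quotient of $y\mapsto\phi(\dd(\cdot,y))$ by a $\mu$-integrable function of the form $\mathrm{const}\cdot\bigl(\phi(1)+\phi(\dd(p,y))\bigr)$, followed by integration to get a Lipschitz constant on a small ball. The only cosmetic difference is that you rebuild the needed increment bound from the subgradient inequality, $\phi'_+(t)\le\phi(t+1)$, and an iterated use of $\phi(1+u)\le\gamma_\phi(\phi(1)+\phi(u))$ anchored at the ball's centre, whereas the paper invokes the one-step estimate of Lemma~\ref{lemDifferences} anchored at the two moving points, yielding a slightly sharper constant.
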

\begin{proof}
Global finiteness follows from Lemma \ref{lemFinite}.  Let $p,q\in\X$ with $h:=\dd(p,q)\leq1$.  If $h=0$, there is nothing to prove.  For each $x\in\X$, let $u_x\in\{p,q\}$ be a point among $p,q$ closest to $x$, and let $v_x$ be the other point.  Then $\dd(v_x,x)\leq\dd(u_x,x)+h$, and monotonicity of $\phi$ gives
$$
|\phi(\dd(p,x))-\phi(\dd(q,x))|
\leq \phi(\dd(u_x,x)+h)-\phi(\dd(u_x,x)).
$$
Applying \eqref{eq:diff1} and integrating,
\begin{align*}
|F_\mu^\phi(p)-F_\mu^\phi(q)|
&\leq h\int\bigl[\gamma_\phi\phi(1)+(\gamma_\phi-1)\phi(\dd(u_x,x))\bigr]\mu(\ddd x)\\
&\leq h\Bigl(\gamma_\phi\phi(1)+(\gamma_\phi-1)\min_{u\in\{p,q\}}F^\phi_\mu(u)\Bigr),
\end{align*}
because $\phi(\dd(u_x,x))\leq\min\{\phi(\dd(p,x)),\phi(\dd(q,x))\}$.

Fix $p_0\in\X$.  If $p,q\in B_{1/2}(p_0)$, then $\dd(p,q)\leq1$, and the one-step estimate used in Lemma \ref{lemFinite} gives
$$
F^\phi_\mu(p)\leq\gamma_\phi F^\phi_\mu(p_0)+\gamma_\phi\phi(1),
\qquad
F^\phi_\mu(q)\leq\gamma_\phi F^\phi_\mu(p_0)+\gamma_\phi\phi(1).
$$
The previous inequality therefore gives a finite Lipschitz constant on $B_{1/2}(p_0)$, depending only on $p_0$, $\phi$, and $\mu$.
\end{proof}

When $\gamma_\phi=1$ (equivalently $\phi(x)=x\phi(1)$), $F^\phi_\mu$ is globally Lipschitz continuous with constant bounded by $\phi(1)$. If $\X$ is compact, then $F^\phi_\mu$ is also globally Lipschitz. In general, it is not: for instance, if $\X=\R$ and $\mu$ is compactly supported, $F^\phi_\mu$ is globally Lipschitz iff $\phi$ is globally Lipschitz.

\begin{theorem}\label{diametro}
Let $\X$ be a metric space with the Heine--Borel property, $\mu\in\mathcal P(\X)$, and $\phi\in\PhiEx$. If $F^\phi_\mu$ is finite somewhere, then $\mathfrak m^\phi_\mu$ is a non-empty compact set with diameter bounded by
$$\mathrm{diam}(\m^\phi_\mu)\leq 2\phi^{-1}\bigl(\inf_{x\in\X}F^\phi_\mu(x)\bigr).$$
\end{theorem}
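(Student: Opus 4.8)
The plan is to establish the two assertions of Theorem \ref{diametro} separately: first non-emptiness and compactness of $\m^\phi_\mu$, then the diameter bound. Throughout write $F:=F^\phi_\mu$ and $m:=\inf_{x\in\X}F(x)$, which is finite by Lemma \ref{lemFinite} and the hypothesis that $F$ is finite somewhere, and recall that $F$ is continuous (indeed locally Lipschitz) by Lemma \ref{Lipschitz}.

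For compactness, the natural approach is to show that $F$ has bounded sublevel sets and then invoke the Heine-Borel property together with continuity. First I would fix any $x_0$ with $F(x_0)<\infty$ and argue that for $r$ large the set $\{x:F(x)\le m+1\}$ is contained in a bounded ball around $x_0$. The key point is a lower bound of the form $F(x)\ge \tfrac12\phi(\dd(x,\mathrm{supp}\,\mu)-C)$ or, more carefully, using that for $x$ far from $x_0$ and $y$ in a fixed large-probability compact set $K$ we have $\dd(x,y)\ge \dd(x,x_0)-\diam(K\cup\{x_0\})$, so $F(x)\ge \mu(K)\,\phi(\dd(x,x_0)-R)\to\infty$ as $\dd(x,x_0)\to\infty$ since $\phi$ is increasing and, being a nonconstant convex function in $\Phi_{ex}$, is unbounded (if $\phi$ were bounded there would be nothing to prove, as then every sublevel set could still be handled by the trivial bound — I would note this edge case). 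Hence $\{F\le m+1\}$ is closed (by continuity) and bounded, therefore compact; $F$ attains its infimum on it, so $\m^\phi_\mu=\{F=m\}$ is a non-empty closed subset of a compact set, hence compact.

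For the diameter bound, let $p,q\in\m^\phi_\mu$, so $F(p)=F(q)=m$. The idea is to test the loss at the midpoint, or more robustly to use convexity of $\phi$ directly on the integrand. For any $y\in\X$, by the triangle inequality $\dd(p,q)\le\dd(p,y)+\dd(y,q)$, so $\max\{\dd(p,y),\dd(y,q)\}\ge\tfrac12\dd(p,q)$, and since $\phi$ is increasing, $\phi(\dd(p,y))+\phi(\dd(q,y))\ge\phi\!\left(\tfrac12\dd(p,q)\right)$ for every $y$. Integrating against $\mu$ gives $2m=F(p)+F(q)\ge\phi\!\left(\tfrac12\dd(p,q)\right)$, i.e. $\phi\!\left(\tfrac12\dd(p,q)\right)\le m$. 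Applying $\phi^{-1}$ (well-defined and increasing on the range of $\phi$; if $m$ lies beyond the range where $\phi$ is strictly increasing one takes $\phi^{-1}(m):=\sup\{t:\phi(t)\le m\}$) yields $\tfrac12\dd(p,q)\le\phi^{-1}(m)$, hence $\dd(p,q)\le 2\phi^{-1}(m)$; taking the supremum over $p,q\in\m^\phi_\mu$ gives the claimed bound.

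I expect the main obstacle to be the compactness argument rather than the diameter bound, which is essentially a one-line convexity/monotonicity computation. The delicate points in the compactness step are: handling the case where $\phi$ is bounded (so $\gamma_\phi$ finite but $\phi$ does not force coercivity), ensuring the chosen compact set $K$ with $\mu(K)>0$ interacts correctly with the growth of $\phi$, and being careful that "$F$ finite somewhere'' plus Lemma \ref{lemFinite} really does give finiteness everywhere so that the sublevel sets are the right objects to work with. One should also make sure $\phi^{-1}$ is interpreted consistently in the bounded case so the stated inequality remains meaningful (it is, with the generalized inverse convention). Once coercivity of $F$ is in hand, the rest follows from standard compactness and the continuity supplied by Lemma \ref{Lipschitz}.
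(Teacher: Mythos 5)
Your compactness half is sound: coercivity of $F^\phi_\mu$ via a compact set $K$ of positive $\mu$-measure (which exists since closed balls are compact and exhaust $\X$), giving $F^\phi_\mu(x)\geq \mu(K)\,\phi(\dd(x,x_0)-R)\to\infty$, plus continuity from Lemma \ref{Lipschitz}, makes the sublevel set $\{F^\phi_\mu\leq m+1\}$ compact and $\m^\phi_\mu=\{F^\phi_\mu=m\}$ a non-empty compact set. This is a legitimate variant of the paper's argument (the paper instead concentrates half of the mass of $F^\phi_\mu(p)$ in a ball $B_{r_p}(p)$ and uses $\phi(cx)\geq c\phi(x)$ to force $F^\phi_\mu(q)\geq 3F^\phi_\mu(p)$ far from $p$); also, the bounded-$\phi$ worry is moot since functions in $\Phi$ are convex bijections of $\R_+$, hence unbounded with a genuine inverse.

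The diameter bound, however, has a genuine gap. Using only monotonicity via $\max\{\dd(p,y),\dd(q,y)\}\geq\tfrac12\dd(p,q)$ you obtain pointwise $\phi(\dd(p,y))+\phi(\dd(q,y))\geq\phi\bigl(\tfrac12\dd(p,q)\bigr)$, and integration gives $2m=F^\phi_\mu(p)+F^\phi_\mu(q)\geq\phi\bigl(\tfrac12\dd(p,q)\bigr)$. From this you can only conclude $\phi\bigl(\tfrac12\dd(p,q)\bigr)\leq 2m$, i.e.\ $\diam(\m^\phi_\mu)\leq 2\phi^{-1}(2m)$, which is strictly weaker than the theorem; your step ``i.e.\ $\phi\bigl(\tfrac12\dd(p,q)\bigr)\leq m$'' does not follow from the inequality you wrote. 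To recover the stated constant you must use convexity pointwise before integrating, exactly as in the second part of Lemma \ref{lemDifferences}: since $\dd(p,q)\leq\dd(p,y)+\dd(q,y)$, convexity and $\phi(0)=0$ give $2\phi\bigl(\tfrac12\dd(p,q)\bigr)\leq\phi(\dd(p,y))+\phi(\dd(q,y))$ for every $y$, and integrating against $\mu$ yields $2\phi\bigl(\tfrac12\dd(p,q)\bigr)\leq F^\phi_\mu(p)+F^\phi_\mu(q)=2m$, hence $\dd(p,q)\leq 2\phi^{-1}(m)$. You do mention ``use convexity of $\phi$ directly on the integrand'' as the robust route, but the computation you actually carry out never invokes convexity, and without it the factor of two is lost.
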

\begin{proof}
Since $\X$ has the Heine--Borel property, it suffices to show $\m^\phi_\mu$ is closed and bounded. Closedness follows from continuity of $F^\phi_\mu$ (Lemma \ref{Lipschitz}). Boundedness is clear if $\X$ is compact. If $F^\phi_\mu(p)=0$ for some $p$, then $\phi(\dd(p,x))=0$ for $\mu$-a.e. $x$, hence $\mu=\delta_p$ because $\phi$ is a bijection of $\R_+$; in this case $\m^\phi_\mu=\{p\}$ and there is nothing to prove. We may therefore assume $\X$ is unbounded and choose $p\in\X$ with $0<F^\phi_\mu(p)<\infty$. By monotone convergence, there exists $r_p>0$ with
\begin{align*}
\int_{B_{r_p}(p)}\phi(\dd(x,p))\mu(\ddd x)\geq\tfrac12 F^\phi_\mu(p).
\end{align*}
Let $q\in\X$ with $\dd(p,q)\geq 5r_p$. For every $x\in B_{r_p}(p)$,
\begin{align*}
\dd(q,x)\geq \dd(p,q)-\dd(p,x)\geq 5r_p-r_p=4r_p\geq 4\dd(p,x).
\end{align*}
By \eqref{eq:scaling} with $c=4$, $\phi(\dd(q,x))\geq 4\phi(\dd(p,x))$, and integrating,
\begin{align*}
F^\phi_\mu(q)\geq 4\int_{B_{r_p}(p)}\phi(\dd(p,x))\mu(\ddd x)\geq 2F^\phi_\mu(p)>F^\phi_\mu(p)\geq\inf F^\phi_\mu.
\end{align*}
Consequently, the infimum of $F^\phi_\mu$ over $\X$ is the same as its infimum over the compact ball $\overline{B_{5r_p}(p)}$, and is therefore attained. Moreover every minimizer lies in $\overline{B_{5r_p}(p)}$. Thus $\m^\phi_\mu$ is non-empty, closed, and bounded, hence compact.

For the diameter bound, assume $\m^\phi_\mu$ contains at least two points $y,z$. By the triangle inequality, $\dd(y,z)\leq\dd(y,x)+\dd(z,x)$ for every $x\in\X$, so by \eqref{eq:diff2} with $a=\dd(y,z)$, $b=\dd(y,x)$, $c=\dd(z,x)$,
$$2\phi(\dd(y,z)/2)\leq\phi(\dd(y,x))+\phi(\dd(z,x)).$$
Integrating against $\mu$,
$$2\phi(\dd(y,z)/2)\leq F^\phi_\mu(y)+F^\phi_\mu(z)=2\inf_{x\in\X}F^\phi_\mu(x),$$
hence $\dd(y,z)\leq 2\phi^{-1}\bigl(\inf F^\phi_\mu\bigr)$.
\end{proof}

The proof shows that on an unbounded $\X$, $F^\phi_\mu$ is unbounded: for every $p\in\X$ and $m>0$ there is a radius $s_{p,m}$ with $F^\phi_\mu(q)\geq m$ for all $q\notin B_{s_{p,m}}(p)$. Note $\inf F^\phi_\mu=0$ iff $\mu$ is a Dirac measure, in which case Theorem \ref{diametro} yields $\m^\phi_{\delta_{\hat x}}=\{\hat x\}$.

The bound is, in fact, tight. Take $\X=S^1$, $\phi=\mathrm{Id}$, and $\mu$ the normalized Haar measure. Then $F^\phi_\mu\equiv \pi/2$ is constant, so $\m^\phi_\mu=\X$ and $\diam(\m^\phi_\mu)=\diam(S^1)=\pi=2\phi^{-1}(\inf F^\phi_\mu)$.

At this point, the population optimization problem is well posed and conveniently localized on a compact region. The remaining task is to combine this localization with the convergence of the empirical criteria to obtain convergence of the empirical argmin sets.

\subsection{Abstract Argmin Convergence}\label{SSArgmin}
The following deterministic lemma underlies both consistency theorems. Its hypotheses correspond to the existence of compact population argmin sets, eventual exclusion of points outside a common compact region, uniform approximation of the criteria on that region, and strict separation of the minimizers from nearby candidates.

\begin{lemma}[Argmin convergence]\label{argmin}
Let $\X$ be a metric space with the Heine--Borel property, $T$ a non-empty index set, and $(F^t)_{t\in T}$, $(F^t_n)_{t\in T,\,n\geq 1}$ two families of continuous functions $\X\to\R$. Assume:
\begin{enumerate}
\item[(H1)] For every $t\in T$, $\m^t:=\argmin_{x\in\X}F^t(x)$ is non-empty compact, $m_t:=\inf F^t>-\infty$, and $M:=\sup_{t\in T}m_t<\infty$.
\item[(H2)] (Uniform tightness.) There is a compact $A\subseteq\X$ with $\bigcup_{t\in T}\m^t\subseteq A$ and a finite $N_0$ such that
$$\inf_{t\in T}\inf_{y\notin A}\min\bigl(F^t_n(y),F^t(y)\bigr)\geq M+1\qquad\text{for all }n\geq N_0.$$
\item[(H3)] (Uniform convergence on $A$.) $\displaystyle\sup_{t\in T}\sup_{x\in A}|F^t_n(x)-F^t(x)|\to 0$ as $n\to\infty$.
\item[(H4)] (Uniform well-separated minima.) For every $\varepsilon>0$,
$$\Delta(\varepsilon):=\inf_{t\in T}\Bigl(\inf_{\substack{x\in A\\ \dd(x,\m^t)\geq\varepsilon}}F^t(x)-m_t\Bigr)>0,$$
with the convention that $\Delta(\varepsilon)=+\infty$ if the inner set is empty.
\end{enumerate}
Then $\displaystyle\sup_{t\in T}\dd_\infty(\m^t_n,\m^t)\to 0$, where $\m^t_n:=\argmin_{x\in\X}F^t_n(x)$.
\end{lemma}
\begin{proof}
Fix $\varepsilon>0$ and set $\delta:=\min(1,\Delta(\varepsilon))/4>0$. By (H3) there is $N_1$ with $\sup_{t\in T}\sup_{x\in A}|F^t_n(x)-F^t(x)|<\delta$ for all $n\geq N_1$. For $n\geq\max(N_0,N_1)$, $t\in T$, and any $x_t^\star\in\m^t\subseteq A$ (which exists by (H1)),
$$F^t_n(x_t^\star)\leq F^t(x_t^\star)+\delta=m_t+\delta.$$
If $x\in A$ has $\dd(x,\m^t)\geq\varepsilon$ then by (H4) and (H3),
$$F^t_n(x)\geq F^t(x)-\delta\geq m_t+\Delta(\varepsilon)-\delta\geq m_t+3\delta>F^t_n(x_t^\star),$$
so $x$ is not a minimizer of $F^t_n$. If $y\notin A$ then by (H2),
$$F^t_n(y)\geq M+1\geq m_t+4\delta>F^t_n(x_t^\star),$$
so $y$ is not a minimizer either. Since $F^t_n$ is continuous and its minimum is attained on the compact set $A$, $\m^t_n$ is non-empty for all sufficiently large $n$. Moreover $\m^t_n\subseteq\{x\in A:\dd(x,\m^t)<\varepsilon\}$, i.e.\ $\dd_\infty(\m^t_n,\m^t)<\varepsilon$. The bound holds uniformly in $t\in T$, and since $\varepsilon$ was arbitrary, the result follows.
\end{proof}

\subsection{Verifying the Hypotheses}\label{SSVerify}
We now verify (H1)--(H4) of Lemma \ref{argmin} in the two cases of interest. The single-$\phi$ case ($T=\{\phi\}$) for Theorem \ref{Cons} requires only Lemmas \ref{compactGC} and \ref{compactTight} below; the uniform case ($T=\Psi$) for Theorem \ref{UnifCons} requires their uniform versions, Lemmas \ref{compactGCUnif} and \ref{compactTightUnif}, and additionally Lemmas \ref{epi}--\ref{contt} for (H4).
The order of the lemmas matches the structure of Lemma \ref{argmin}.

\begin{lemma}\label{compactGC}
For any non-empty compact set $K\subseteq\X$ and $\phi\in\PhiEx$ such that $F^\phi_\mu$ is finite at one, hence every, point of $\X$, as $n\to\infty$,
$$\sup_{x\in K}\bigl|F^\phi_n(x)-F^\phi_\mu(x)\bigr|\overset{\boldsymbol\mu-\mathrm{as}}{\to}0.$$
\end{lemma}
\begin{proof}
Set $\gamma:=\gamma_\phi$, fix $\varepsilon>0$ and a reference point $z\in K$, and let $k:=\lceil\diam(K)\rceil<\infty$. Iterating $\phi(a+1)\leq\gamma(\phi(a)+\phi(1))$ $k$ times yields the bound
\begin{equation}\phi(a+k)\leq \gamma^k\phi(a)+C_k,\qquad C_k:=\gamma\phi(1)\frac{\gamma^k-1}{\gamma-1},\label{eq:gammak}\end{equation}
valid for all $a\geq 0$ (with the convention $C_k=k\phi(1)$ when $\gamma=1$). For $x\in K$, $\dd(z,x)\leq k$, so
\begin{align*}
F^\phi_n(x)&=\frac1n\sum_{j=1}^n\phi(\dd(X_j,x))\leq \frac1n\sum_{j=1}^n\phi(\dd(X_j,z)+k)\leq \gamma^k F^\phi_n(z)+C_k.
\end{align*}
By the SLLN, $F^\phi_n(z)\to F^\phi_\mu(z)$ a.s., so there is an a.s.\ finite $N$ such that $F^\phi_n(z)\leq F^\phi_\mu(z)+1$ for $n\geq N$, and
$$\sup_{x\in K}F^\phi_n(x)\leq \alpha_K:=\gamma^k(F^\phi_\mu(z)+1)+C_k<\infty\quad\text{a.s.\ for }n\geq N.$$
For $\delta_1\in(0,1]$ and $n\geq N$, the pointwise estimate in the proof of Lemma \ref{Lipschitz} gives, for every $x,y\in K$ with $\dd(x,y)\leq\delta_1$,
\begin{align*}
|F^\phi_n(x)-F^\phi_n(y)|
&\leq \delta_1\Bigl(\gamma\phi(1)+(\gamma-1)\min\{F^\phi_n(x),F^\phi_n(y)\}\Bigr)\\
&\leq \delta_1\bigl(\gamma\phi(1)+(\gamma-1)\alpha_K\bigr).
\end{align*}
Therefore, the same bound holds after taking the supremum over such $x,y$.
Pick $\delta_1$ small enough that the right-hand side is $\leq\varepsilon/3$. By Lemma \ref{Lipschitz}, $F^\phi_\mu$ is continuous, so there is $\delta_2>0$ such that $|F^\phi_\mu(x)-F^\phi_\mu(y)|\leq\varepsilon/3$ whenever $\dd(x,y)\leq\delta_2$. Set $\delta:=\min(\delta_1,\delta_2)$ and cover $K$ by finitely many balls $B_\delta(z_1),\dots,B_\delta(z_M)$. By the SLLN, for each $i$ there is $N_i<\infty$ a.s.\ with $|F^\phi_n(z_i)-F^\phi_\mu(z_i)|\leq\varepsilon/3$ for $n\geq N_i$. Setting $N':=\max(N,N_1,\dots,N_M)$ and writing $j_x:=\min\{i:\dd(z_i,x)<\delta\}$, for $n\geq N'$ and $x\in K$,
\begin{align*}
|F^\phi_n(x)-F^\phi_\mu(x)|&\leq |F^\phi_n(x)-F^\phi_n(z_{j_x})|+|F^\phi_n(z_{j_x})-F^\phi_\mu(z_{j_x})|+|F^\phi_\mu(z_{j_x})-F^\phi_\mu(x)|\\
&\leq \varepsilon/3+\varepsilon/3+\varepsilon/3=\varepsilon.\qedhere
\end{align*}
\end{proof}

\begin{lemma}\label{compactTight}
Let $\phi\in\PhiEx$ be such that $F^\phi_\mu$ is finite at one, hence every, point of $\X$, and set $m:=\inf_{x\in\X}F^\phi_\mu(x)$. There is an a.s.\ finite random integer $N$ and a compact set $A\subseteq\X$ containing $\m^\phi$ such that for all $n\geq N$,
$$\inf_{y\notin A}\min\bigl(F^\phi_\mu(y),F^\phi_n(y)\bigr)\geq m+1.$$
\end{lemma}
\begin{proof}
If $\X$ is compact, take $A=\X$ and $N=1$; the assertion is immediate, with the usual convention that the infimum over the empty set is $+\infty$.  Otherwise, by tightness choose a compact $B\subseteq\X$ with $\mu(B)>3/4$, and let $A_k:=\{x\in\X:\dd(x,B)\leq k\}$ (compact, since closed and bounded). For $y\notin A_k$ and $x\in B$, $\dd(y,x)>k$, hence
$$F^\phi_\mu(y)\geq \int_B \phi(\dd(y,x))\mu(\ddd x)\geq \phi(k)\mu(B)\geq \tfrac{3}{4}\phi(k).$$
For the sample loss, $F^\phi_n(y)\geq \tfrac{|\{j\leq n:X_j\in B\}|}{n}\phi(k)$. By the SLLN, $|\{j\leq n:X_j\in B\}|/n\to\mu(B)>3/4$ a.s., so there is an a.s.\ finite $N$ with $|\{j\leq n:X_j\in B\}|/n>1/2$ for $n\geq N$. Choose $k^*>0$ so large that $\phi(k^*)/2>m+1$ (possible since $\phi(k)\to\infty$), and set $A:=A_{k^*}$. Then for $n\geq N$ and $y\notin A$,
$$F^\phi_\mu(y)\geq\tfrac34\phi(k^*)>m+1,\qquad F^\phi_n(y)\geq\tfrac12\phi(k^*)>m+1.$$
Finally, for $x\in\m^\phi$, $F^\phi_\mu(x)=m<m+1\leq\tfrac34\phi(k^*)$, so $\m^\phi\subseteq A$.
\end{proof}
We are now in a position to prove the first main result.
\begin{proof}[Proof of Theorem \ref{Cons}]
The pointwise theorem is obtained by checking the hypotheses of Lemma \ref{argmin} one by one.
We apply Lemma \ref{argmin} with $T=\{\phi\}$, pathwise outside a $\boldsymbol\mu$-null set. By Theorem \ref{diametro} and Lemma \ref{Lipschitz}, $\m^\phi$ is non-empty compact and $F^\phi_\mu$ is continuous; moreover $M=m=\inf F^\phi_\mu<\infty$. Thus (H1) holds. Lemma \ref{compactTight} supplies the compact set $A$ and an almost surely finite $N_0$ realising (H2). Hypothesis (H3) follows from Lemma \ref{compactGC} applied to $K=A$. For (H4) with $T=\{\phi\}$ and $\varepsilon>0$, the set $K_\varepsilon:=\{x\in A:\dd(x,\m^\phi)\geq\varepsilon\}$ is compact and disjoint from $\m^\phi$, so the continuous function $F^\phi_\mu$ attains its minimum on $K_\varepsilon$ strictly above $m=\inf F^\phi_\mu$, giving $\Delta(\varepsilon)>0$. Lemma \ref{argmin} concludes.
\end{proof}
For the uniform theorem, the compact-uniform law of large numbers and the compact localization step must both hold simultaneously over $\phi\in\Psi$. The next three lemmas provide these uniform extensions.
\begin{lemma}\label{compactGCUnif}
Let $\Psi\subseteq\PhiEx$ be non-empty and dominated from above by an envelope $\psi\in\PhiEx$ such that $F^\psi_\mu$ is finite at one point. Then for any non-empty compact $K\subseteq\X$, as $n\to\infty$,
$$\sup_{\phi\in\Psi}\sup_{x\in K}|F^\phi_n(x)-F^\phi_\mu(x)|\overset{\boldsymbol\mu-\mathrm{as}}{\to}0.$$
\end{lemma}
\begin{proof}
Set $\Gamma:=\sup_{\phi\in\Psi}\gamma_\phi<\infty$, $\eta:=\gamma_\psi$, fix $\varepsilon>0$, $z\in K$ and $k:=\lceil\diam(K)\rceil$. By the envelope condition, $\phi\leq\psi$ pointwise and hence $F^\phi_\mu\leq F^\psi_\mu$ on $\X$, and the bound on the local Lipschitz constant given by the proof of Lemma \ref{Lipschitz} yields that every $F^\phi_\mu$, $\phi\in\Psi$, is Lipschitz on $K$ with constant at most $\Gamma\psi(1)+(\Gamma-1)\sup_{x\in K}F^\psi_\mu(x)<\infty$, uniformly in $\phi$. Hence there is $\delta_1>0$ with
$$\sup_{\phi\in\Psi}\sup_{\substack{x,y\in K\\ \dd(x,y)\leq\delta_1}}|F^\phi_\mu(x)-F^\phi_\mu(y)|<\varepsilon/3.$$

For the empirical side, applying \eqref{eq:gammak} to the envelope $\psi$ gives $\phi(a+k)\leq\psi(a+k)\leq\eta^k\psi(a)+C_k^\psi$, where $
C_k^\psi=\eta\psi(1)\frac{\eta^k-1}{\eta-1}$
with the convention $C_k^\psi=k\psi(1)$ when $\eta=1$. Therefore
$$\sup_{\phi\in\Psi}\sup_{x\in K}F^\phi_n(x)\leq \eta^k F^\psi_n(z)+C_k^\psi\leq \alpha_K:=\eta^k(F^\psi_\mu(z)+1)+C_k^\psi$$
almost surely for $n$ large enough, by the SLLN applied to $F^\psi_n(z)$. The same pointwise estimate used in Lemma \ref{compactGC} then gives
$$\sup_{\phi\in\Psi}\sup_{\substack{x,y\in K\\ \dd(x,y)\leq\delta_2}}|F^\phi_n(x)-F^\phi_n(y)|\leq\delta_2\bigl(\Gamma\psi(1)+(\Gamma-1)\alpha_K\bigr)<\varepsilon/3$$
for $\delta_2$ small enough.

Set $\delta:=\min(\delta_1,\delta_2)$ and cover $K$ by finitely many balls $B_\delta(z_1),\dots,B_\delta(z_M)$. By Lemma \ref{glivenko} the class $\{\phi\circ\dd(z_i,\cdot):\phi\in\Psi\}$ is $\mu$-Glivenko--Cantelli for each $i$, so $\sup_{\phi\in\Psi}|F^\phi_n(z_i)-F^\phi_\mu(z_i)|\to 0$ a.s. Combining the three estimates by the same triangle inequality as in Lemma \ref{compactGC} gives the result.
\end{proof}

\begin{lemma}\label{glivenko}
Let $\mu\in\mathcal P(\X)$ and $\Psi\subseteq\PhiEx$ be non-empty and dominated from above by an envelope $\psi\in\PhiEx$ such that $F^\psi_\mu$ is finite at one point. Then for any $z\in\X$, the class of functions $\mathcal F:=\{f_\phi:x\mapsto\phi(\dd(z,x)):\phi\in\Psi\}$ is $\mu$-Glivenko--Cantelli.
\end{lemma}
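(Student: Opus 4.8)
The plan is to reduce the statement to a one–dimensional Glivenko–Cantelli problem and then exploit that every $\phi\in\Psi$ is monotone. Fix $z\in\X$ and set $T:\X\to\R_+$, $T(x):=\dd(z,x)$, with push-forward law $\nu:=T_{*}\mu$ on $\R_+$. Then $f_\phi=\phi\circ T$, $\int f_\phi\,\ddd\hat\mu_n=\int\phi\,\ddd\hat\nu_n$ with $\hat\nu_n:=\frac1n\sum_{j=1}^n\delta_{T(X_j)}$ the empirical measure of the \iid real sample $(T(X_j))_{j\ge1}\sim\nu$, and $\int f_\phi\,\ddd\mu=\int\phi\,\ddd\nu$. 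So it suffices to prove $\sup_{\phi\in\Psi}\bigl|\int\phi\,\ddd\hat\nu_n-\int\phi\,\ddd\nu\bigr|\to0$ almost surely, i.e.\ that $\{\phi:\phi\in\Psi\}$, viewed as functions on $(\R_+,\nu)$, is $\nu$-Glivenko–Cantelli.

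Next I would record two structural facts. First, by Lemma~\ref{dominion} (domination from above) there is $\psi\in\Phi_{ex}$ with $\phi(t)\le\psi(t)$ for all $\phi\in\Psi$ and $t\ge0$, and by Lemma~\ref{lemFinite} the hypothesis that $F^\psi_\mu$ is finite somewhere forces $\int\psi\,\ddd\nu=F^\psi_\mu(z)<\infty$; hence $\psi$ is an integrable envelope for the class. Second, each $\phi\in\Psi$ is non-decreasing and continuous, so $\phi(t)=\phi(0)+\int_{(0,t]}\ddd\phi$ for a non-negative Lebesgue–Stieltjes measure $\ddd\phi$, and Tonelli's theorem gives the layer-cake identity $\int\phi\,\ddd Q=\phi(0)+\int_0^\infty\bigl(1-G_Q(s)\bigr)\,\ddd\phi(s)$ for every probability measure $Q$ on $\R_+$ with distribution function $G_Q$. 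Applying this with $Q=\hat\nu_n$ and $Q=\nu$ and subtracting yields the clean representation $\int\phi\,\ddd\hat\nu_n-\int\phi\,\ddd\nu=\int_0^\infty\bigl(G_\nu(s)-G_{\hat\nu_n}(s)\bigr)\,\ddd\phi(s)$, where $G_{\hat\nu_n}$ is the ordinary empirical distribution function of $T(X_1),\dots,T(X_n)$.

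Now I would truncate. Fix $\varepsilon>0$ and choose $M$ with $\int_{(M,\infty)}\psi\,\ddd\nu<\varepsilon$, possible since $\int\psi\,\ddd\nu<\infty$. Split the integral at $M$: using $0\le G_\nu,G_{\hat\nu_n}\le1$, the $[0,M]$–part is at most $\| G_{\hat\nu_n}-G_\nu\|_\infty\,(\phi(M)-\phi(0))\le\psi(M)\| G_{\hat\nu_n}-G_\nu\|_\infty$, while the $(M,\infty)$–part is at most $\int_M^\infty(1-G_\nu)\,\ddd\phi+\int_M^\infty(1-G_{\hat\nu_n})\,\ddd\phi$, and Tonelli again identifies these two integrals with $\int_{(M,\infty)}(\phi-\phi(M))\,\ddd\nu\le\int_{(M,\infty)}\psi\,\ddd\nu$ and $\frac1n\sum_j(\phi(T(X_j))-\phi(M))^+\le\frac1n\sum_j\psi(T(X_j))\,\mathbf 1\{T(X_j)>M\}$, both now free of $\phi$. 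Hence
\[
\sup_{\phi\in\Psi}\Bigl|\int\phi\,\ddd\hat\nu_n-\int\phi\,\ddd\nu\Bigr|\ \le\ \psi(M)\,\| G_{\hat\nu_n}-G_\nu\|_\infty\ +\ \varepsilon\ +\ \frac1n\sum_{j=1}^n\psi(T(X_j))\,\mathbf 1\{T(X_j)>M\}.
\]
As $n\to\infty$ the first term on the right tends to $0$ almost surely by the classical one-dimensional Glivenko–Cantelli theorem for the \iid real variables $T(X_j)$, and the last term tends almost surely to $\E\!\left[\psi(T(X))\mathbf 1\{T(X)>M\}\right]=\int_{(M,\infty)}\psi\,\ddd\nu<\varepsilon$ by the strong law of large numbers (its summands are dominated by the integrable $\psi\circ T$). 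Thus $\limsup_n\sup_{\phi\in\Psi}\bigl|\int\phi\,\ddd\hat\nu_n-\int\phi\,\ddd\nu\bigr|\le2\varepsilon$ almost surely; intersecting these probability-one events over a sequence $\varepsilon=\varepsilon_k\downarrow0$ gives $\sup_{\phi\in\Psi}\bigl|\int\phi\,\ddd\hat\nu_n-\int\phi\,\ddd\nu\bigr|\to0$ almost surely, which is the claim. Since this supremum is squeezed between $0$ and a measurable sequence tending to $0$, no separability of $\Psi$ is needed.

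I expect the only genuinely delicate point to be the tail: the quantity $\int_M^\infty(1-G_{\hat\nu_n})\,\ddd\phi$ must be controlled uniformly in $\phi\in\Psi$ \emph{and} uniformly over all large $n$, and this is exactly where both the uniform integrable envelope $\psi$ coming from domination from above and the strong law applied to $\psi(T)\mathbf 1\{T>M\}$ are used; the bounded part is handled for free by univariate Glivenko–Cantelli precisely because every $\phi$ is monotone. Note that convexity of the $\phi$'s plays no real role here (only continuity, which could itself be bypassed), and the compactness half of the hypotheses of Theorem~\ref{UnifCons} is not needed for this lemma. A less self-contained alternative would instead verify that $\Psi$ has finite $L^1(\nu)$–bracketing numbers at every scale — finite because classes of monotone functions with an integrable envelope do, by truncation together with the classical $O(1/\varepsilon)$ bracketing-entropy bound for uniformly bounded monotone functions — and then invoke the bracketing form of the Glivenko–Cantelli theorem.
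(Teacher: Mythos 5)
Your argument is correct, but it proves Lemma \ref{glivenko} by a genuinely different route than the paper. The paper performs the same first two moves you do -- pushing $\mu$ forward to a law $\nu$ on $\R_+$ via $x\mapsto\dd(z,x)$ and using the envelope $\psi$ supplied by domination (Lemma \ref{dominion}) to truncate the tail -- but then it concludes abstractly: it bounds the $L^1(\nu)$-bracketing numbers of $\Psi$ by those of uniformly bounded monotone functions on $[0,a]$ (extending each bracket by $0$ and by $\psi$ beyond the truncation point), invokes the classical finiteness of bracketing numbers for monotone classes (Theorem 2.7.5 of \cite{Vaart}), and then applies the bracketing Glivenko--Cantelli theorem (Theorem 3.2 of \cite{Bodhisattva}); this is exactly the ``less self-contained alternative'' you mention at the end. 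You instead give a direct, elementary proof: the Lebesgue--Stieltjes/layer-cake identity turns $\int\phi\,\ddd\hat\nu_n-\int\phi\,\ddd\nu$ into $\int(G_\nu-G_{\hat\nu_n})\,\ddd\phi$, the part below the truncation level is controlled uniformly in $\phi$ by $\psi(M)\,\|G_{\hat\nu_n}-G_\nu\|_\infty$ and the univariate Glivenko--Cantelli theorem, and the tail is controlled uniformly by the SLLN applied to $\psi(T)\mathbf 1\{T>M\}$. What your route buys is self-containedness (no empirical-process machinery beyond the classical DKW-free Glivenko--Cantelli theorem and the SLLN) and an explicit squeeze that dispenses with measurability/separability concerns; what the paper's route buys is brevity given the cited results and, implicitly, quantitative entropy information that a bracketing bound carries. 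One cosmetic remark: in the layer-cake identity the inner probability is really $1-G_Q(s^-)$ rather than $1-G_Q(s)$, but since each $\phi$ is continuous its Stieltjes measure is atomless, so the countable set where the two differ is $\ddd\phi$-null and your formula stands as written.
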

\begin{proof}
Let $\nu$ denote the pushforward of $\mu$ under the map $y\mapsto\dd(z,y)$ from $\X$ to $\R_+$. Then
$$\|\phi\circ\dd(z,\cdot)\|_{L^1(\mu)}=\int_\X \phi(\dd(z,x))\mu(\ddd x)=\int_{\R_+}\phi(r)\nu(\ddd r),$$
so an $L^1(\nu)$-bracketing of $\Psi$ pulls back to an $L^1(\mu)$-bracketing of $\mathcal F$ with the same bracket widths. Thus it suffices, by Theorem 3.2 in \citet{Bodhisattva}, to show $N_{[]}(\varepsilon,\Psi,L^1(\nu))<\infty$ for every $\varepsilon>0$.

Since $\|\psi\|_{L^1(\nu)}=F^\psi_\mu(z)<\infty$, there is $a>0$ with $\int_{(a,\infty)}\psi(r)\nu(\ddd r)<\varepsilon/2$. Set $b:=\psi(a)$. Every $\phi\in\Psi$ is increasing from $[0,a]$ to $[0,b]$. Bracketings of the set $\mathcal L_{a,b}$ of increasing functions from $[0,a]$ to $[0,b]$ in $L^1(\nu|_{[0,a]})$ extend to $\varepsilon$-brackets of $\Psi$ in $L^1(\nu)$ by setting the lower bound to $0$ and the upper bound to $\psi$ on $(a,\infty)$. By a grid argument as in Theorem 2.7.5 in \citet{Vaart}, $N_{[]}(\varepsilon/2,\mathcal L_{a,b},L^1(\nu|_{[0,a]}))<\infty$, and the conclusion follows.
\end{proof}

\begin{lemma}\label{compactTightUnif}
Let $\Psi\subseteq\PhiEx$ be non-empty and dominated from above by an envelope $\psi\in\PhiEx$ such that $F^\psi_\mu$ is finite at one point, and non-degenerate, i.e.\ $\phi_*:=\inf_{\phi\in\Psi}\phi(1)>0$. Set $\lambda:=\inf F^\psi_\mu$. There is an a.s.\ finite random integer $N$ and a compact set $A\subseteq\X$ containing $\bigcup_{\phi\in\Psi}\m^\phi$ such that for all $n\geq N$,
$$\inf_{\phi\in\Psi}\inf_{y\notin A}\min\bigl(F^\phi_\mu(y),F^\phi_n(y)\bigr)\geq \lambda+1.$$
\end{lemma}
\begin{proof}
By Theorem \ref{diametro} applied to $\psi$, $\lambda$ is finite and attained; fix $x_\psi\in\m^\psi$.  If $\X$ is compact, take $A=\X$ and $N=1$; the assertion is immediate, again with the convention that the infimum over the empty set is $+\infty$.  Otherwise choose a compact $B\subseteq\X$ with $\mu(B)>3/4$, set $A_k:=\{x:\dd(x,B)\leq k\}$, and let $\varphi(t):=\inf_{\phi\in\Psi}\phi(t)$.

For $y\notin A_k$, $x\in B$, and $\phi\in\Psi$: $\dd(y,x)>k$, so $\phi(\dd(y,x))\geq \phi(k)\geq\varphi(k)$. Hence
$$F^\phi_\mu(y)\geq \int_B\varphi(\dd(y,x))\mu(\ddd x)\geq\varphi(k)\mu(B)\geq \tfrac34\varphi(k).$$
By convexity with $\phi(0)=0$, $\phi(t)/t$ is non-decreasing on $(0,\infty)$, so $\phi(t)\geq t\phi(1)\geq t\phi_*$ for $t\geq 1$, hence $\varphi(t)\geq t\phi_*\to\infty$.

Similarly, $F^\phi_n(y)\geq \tfrac{|\{j\leq n:X_j\in B\}|}{n}\varphi(k)$. By the SLLN, $|\{j\leq n:X_j\in B\}|/n>1/2$ a.s.\ for $n$ large.

Choose $k^*>0$ so large that $\varphi(k^*)/2>\lambda+1$, and set $A:=A_{k^*}$. Then for $n$ large,
$$F^\phi_\mu(y)\geq\tfrac34\varphi(k^*)>\lambda+1,\qquad F^\phi_n(y)\geq\tfrac12\varphi(k^*)>\lambda+1\quad\text{for all }y\notin A,\,\phi\in\Psi.$$
Finally, for each $\phi\in\Psi$, using $\phi\leq\psi$ pointwise,
$$\inf_{x\in\X} F^\phi_\mu(x)\;\leq\; F^\phi_\mu(x_\psi)\;\leq\; F^\psi_\mu(x_\psi)\;=\;\lambda<\lambda+1\leq \tfrac34\varphi(k^*),$$
so $\m^\phi$ cannot meet $A^c$, i.e.\ $\m^\phi\subseteq A$.
\end{proof}

It remains to verify (H4) uniformly in $\phi\in\Psi$. This is where compactness of $\Psi$ enters and where the uniqueness of each population minimizer is crucial. Intuitively, one needs a lower bound on the separation gap that does not deteriorate as $\phi$ varies in the set $\Psi$.

\begin{lemma}\label{contt}
Let $\Psi\subseteq\PhiEx$ be compact and dominated from above by an envelope $\psi\in\PhiEx$ such that $F^\psi_\mu$ is finite at one point. Then for any non-empty compact $A\subseteq\X$, the family $\mathcal F:=\{F^\phi_\mu|_A:\phi\in\Psi\}$ is a compact subset of $C(A)$.
\end{lemma}
\begin{proof}
We show that $\phi\mapsto F^\phi_\mu|_A$ is continuous from $\Psi$ (with the compact-open topology) to $C(A)$ (with the sup norm); the conclusion follows since the continuous image of a compact set is compact.

Fix $\varepsilon>0$ and choose $z\in A$. Put $D:=\mathrm{diam}(A\cup\{z\})$ and $k_D:=\lceil D\rceil$. Iterating the defining growth inequality for $\psi\in\PhiEx$ gives constants $C_D,c_D<\infty$ (depending only on $k_D$ and $\psi$) such that
\begin{align*}
\sup_{x\in A}\psi(\dd(x,y))\le C_D\psi(\dd(z,y))+c_D\qquad\text{for all }y\in\X.
\end{align*}
Since $F^\psi_\mu(z)<\infty$, the right-hand side is integrable. Hence, by dominated convergence applied to the decreasing sets
\begin{align*}
E_R:=\{y:\dd(z,y)>R-D\},
\end{align*}
we may choose $R>D$ such that
\begin{align*}
\sup_{x\in A}\int_{\{y:\dd(x,y)>R\}}\psi(\dd(x,y))\,\mu(\ddd y)<\varepsilon/4.
\end{align*}
For $\phi_1,\phi_2\in\Psi$ and $x\in A$,
\begin{align*}
|F^{\phi_1}_\mu(x)-F^{\phi_2}_\mu(x)|
&\leq\int_{\dd(x,y)\leq R}|\phi_1-\phi_2|(\dd(x,y))\,\mu(\ddd y)+\int_{\dd(x,y)>R}|\phi_1-\phi_2|(\dd(x,y))\,\mu(\ddd y)\\
&\leq \|\phi_1-\phi_2\|_{C([0,R])}+2\cdot\varepsilon/4,
\end{align*}
uniformly in $x\in A$, because $\phi_1,\phi_2\leq\psi$. Hence if $\|\phi_1-\phi_2\|_{C([0,R])}<\varepsilon/2$ then $\sup_{x\in A}|F^{\phi_1}_\mu(x)-F^{\phi_2}_\mu(x)|<\varepsilon$. This proves continuity of $\phi\mapsto F^\phi_\mu|_A$; since $\Psi$ is compact in the compact-open topology, its image in $C(A)$ is compact.
\end{proof}

\begin{lemma}\label{epi}
Let $A$ be compact and let $\mathcal F\subseteq C(A)$ be compact in the sup norm. Assume every $F\in\mathcal F$ has a unique minimizer $x_F\in A$. Then for every $\varepsilon>0$,
\begin{align*}
\inf_{F\in\mathcal F}\Bigl(\inf_{\substack{x\in A\\ \dd(x,x_F)\geq\varepsilon}}F(x)-F(x_F)\Bigr)>0,
\end{align*}
with the convention that the inner infimum is $+\infty$ if the displayed set is empty.
\end{lemma}
\begin{proof}
Suppose the conclusion fails for some $\varepsilon>0$. Then there are $F_j\in\mathcal F$ and $x_j\in A$ such that $\dd(x_j,x_{F_j})\ge\varepsilon$ and
\begin{align*}
F_j(x_j)-F_j(x_{F_j})\longrightarrow 0.
\end{align*}
By compactness, after passing to a subsequence, $F_j\to F$ uniformly in $C(A)$ and $x_j\to x\in A$. Uniform convergence, uniqueness of minimizers, and the elementary argmin-continuity argument imply $x_{F_j}\to x_F$: otherwise a subsequential limit $z$ of $x_{F_j}$ would satisfy $F(z)=F(x_F)$ and hence $z=x_F$. Passing to the limit gives $\dd(x,x_F)\ge\varepsilon$, while uniform convergence gives
\begin{align*}
F(x)-F(x_F)=\lim_j\{F_j(x_j)-F_j(x_{F_j})\}=0,
\end{align*}
contradicting the uniqueness of the minimizer of $F$. Hence, the infimum is strictly positive.
\end{proof}

\begin{proof}[Proof of Theorem \ref{UnifCons}]
We apply Lemma \ref{argmin} with $T=\Psi$, pathwise outside a $\boldsymbol\mu$-null set. Hypothesis (H1) holds by Theorem \ref{diametro} and the uniqueness assumption; moreover $m_\phi:=\inf F^\phi_\mu\leq\inf F^\psi_\mu$ for every $\phi\in\Psi$, so $M:=\sup_{\phi\in\Psi}m_\phi<\infty$. The evaluation $\phi\mapsto\phi(1)$ is continuous on $\Psi$ in the compact-open topology and is strictly positive on every $\phi\in\Phi$ (which is a bijection of $\R_+$); by compactness of $\Psi$, $\phi_*:=\inf_{\phi\in\Psi}\phi(1)>0$. Lemma \ref{compactTightUnif} then yields a compact $A_0$ containing $\bigcup_{\phi\in\Psi}\m^\phi$; enlarging to $A:=\{x:\dd(x,A_0)\leq 1\}$, retains compactness. Since $A^c\subseteq A_0^c$, the same tightness bound gives (H2) for this enlarged $A$. Hypothesis (H3) is Lemma \ref{compactGCUnif} applied to $K=A$. For (H4), Lemma \ref{contt} shows that $\mathcal F:=\{F^\phi_\mu|_A:\phi\in\Psi\}$ is compact in $C(A)$, and Lemma \ref{epi} supplies the uniform positive lower bound on the well-separation gap. Thus, an application of Lemma \ref{argmin} concludes.
\end{proof}

\section{Power-mean asymptotics and the Chebyshev limit}\label{SSChebyshev}
The same deterministic Lemma \ref{argmin} can also be used beyond sampling. Once the criteria converge uniformly on a compact localization set and the limit argmin set is well separated, the same mechanism also yields deterministic limits along the loss parameter.

For a measure $\mu\in\mathcal P(\X)$ with compact support $K:=\mathrm{supp}(\mu)$, define the \emph{covering radius} and the \emph{Chebyshev centre (set)} of $\mu$ as
$$R(x):=\sup_{y\in K}\dd(x,y),\qquad \m^\infty_\mu:=\argmin_{x\in\X}R(x).$$
Note that $R$ is $1$-Lipschitz and satisfies $R(x)\geq\dd(x, K)\to\infty$ as $x$ diverges to infinity, so $\m^\infty_\mu$ is both non-empty and compact, by the Heine--Borel and continuity properties.

\begin{proposition}\label{chebyshev}
Let $\X$ be a Heine--Borel metric space, $\mu\in\mathcal P(\X)$ with compact support $K$, and $\phi_p(t):=t^p$ for $p\geq 1$. Then, as $p\to\infty$,
$$\dd_\infty\bigl(\m^{\phi_p}_\mu,\,\m^\infty_\mu\bigr)\to 0.$$
\end{proposition}
\begin{proof}
Set $r_p(x):=F^{\phi_p}_\mu(x)^{1/p}=\|\dd(x,\cdot)\|_{L^p(\mu)}$. Since $t\mapsto t^p$ is strictly increasing on $\R_+$, $$\argmin r_p=\argmin F^{\phi_p}_\mu=\m^{\phi_p}_\mu,$$ so it suffices to show $\dd_\infty(\argmin r_p,\argmin R)\to 0$ as $p\to\infty$. We verify the deterministic version of Lemma \ref{argmin} with index $T=\{*\}$, $F^*=R$, $F^*_n:=r_{p_n}$ for an arbitrary sequence $p_n\to\infty$.

\emph{(H1):} $\m^\infty_\mu=\argmin R$ is non-empty compact, as noted above, and $M=m_\infty:=\inf R<\infty$.

\emph{(H2) Tightness.} For every $x\in\X$ and every $p\geq 1$, $r_p(x)\geq\dd(x,K)$ (since $\dd(x,y)\geq\dd(x,K)$ for all $y\in K$), and similarly $R(x)\geq\dd(x,K)$. Take $A:=\{y\in\X:\dd(y,K)\leq m_\infty+1\}$. Then $A$ is closed bounded, hence compact, contains $\m^\infty_\mu$ (because $R(z)=m_\infty$ implies $\dd(z,K)\leq m_\infty$), and for every $y\notin A$ and every $p$, $\min(r_p(y),R(y))\geq\dd(y,K)>m_\infty+1$, so (H2) holds with $N_0=1$.

\emph{(H3) Uniform convergence on $A$.} Fix $\varepsilon>0$. The bound $\dd(x,y)\leq R(x)$ for $y\in K$ gives $r_p(x)\leq R(x)$. For the converse direction, set
$$K_\varepsilon(x):=\{y\in K:\dd(x,y)>R(x)-\varepsilon\}.$$
This is the intersection of $K=\supp\mu$ with an open set of $\X$ and contains a maximizer of $\dd(x,\cdot)|_K$, so $\mu(K_\varepsilon(x))>0$. The map $x\mapsto \mu(K_\varepsilon(x))$ is lower semicontinuous: if $x_n\to x$ and $y\in K_\varepsilon(x)$, the continuity of $R$ and $\dd$ gives $\dd(x_n,y)>R(x_n)-\varepsilon$ for all large $n$, so $K_\varepsilon(x)\subseteq\liminf_n K_\varepsilon(x_n)$, and Fatou's lemma for sets yields $\mu(K_\varepsilon(x))\leq\liminf_n\mu(K_\varepsilon(x_n))$. Lower semicontinuity on the compact set $A$ implies that $\delta_\varepsilon:=\inf_{x\in A}\mu(K_\varepsilon(x))$ is attained and strictly positive. Hence
$$r_p(x)^p\geq\int_{K_\varepsilon(x)}\dd(x,y)^p\,\mu(\ddd y)\geq (R(x)-\varepsilon)_+^p\mu(K_\varepsilon(x))\geq (R(x)-\varepsilon)_+^p\delta_\varepsilon,$$
where $u_+:=\max(u,0)$. Thus $r_p(x)\geq (R(x)-\varepsilon)_+\delta_\varepsilon^{1/p}$ uniformly in $x\in A$. Since $R$ is bounded on $A$ and $\delta_\varepsilon^{1/p}\to1$, it follows that $\limsup_{p\to\infty}\sup_{x\in A}|R(x)-r_p(x)|\leq\varepsilon$. As $\varepsilon>0$ is arbitrary, $r_p\to R$ uniformly on $A$.

\emph{(H4) Separation.} Continuity of $R$ and compactness of $\m^\infty_\mu$ give, for every $\varepsilon>0$, $\Delta(\varepsilon):=\inf\{R(x)-m_\infty:x\in A,\,\dd(x,\m^\infty_\mu)\geq\varepsilon\}>0$.

Lemma \ref{argmin} then yields $\dd_\infty(\argmin r_{p_n},\m^\infty_\mu)\to 0$ along any sequence $p_n\to\infty$, hence as $p\to\infty$.
\end{proof}

The preceding statement is qualitative and applies in arbitrary Heine--Borel spaces. In one elementary model, however, the whole path can be computed in closed form. The exact formula below isolates the deterministic interpolation in $p$, and we subsequently provide two numerical illustrations, one in the same model and one on $\mathbb S^2$.

\begin{proposition}[Explicit two-point power mean]\label{prop:two-point-phi-p}
Let $L>0$, let $a\in(0,1)$, and consider the probability measure
\begin{align*}
\mu=a\,\delta_0+(1-a)\,\delta_L
\end{align*}
on the interval $[0,L]$ with the Euclidean distance. For $p\ge1$, set
\begin{align*}
F_p(t):=a|t|^p+(1-a)|L-t|^p,\qquad t\in[0,L].
\end{align*}
If $p>1$, then $F_p$ has the unique minimizer
\begin{align*}
 t_p=
 \frac{(1-a)^{1/(p-1)}}{a^{1/(p-1)}+(1-a)^{1/(p-1)}}\,L, \quad\mbox{equivalently},\quad
\frac{t_p}{L-t_p}=\left(\frac{1-a}{a}\right)^{1/(p-1)}.
\end{align*}
For $p=1$, the minimizer is $0$ if $a>1/2$, is $L$ if $a<1/2$, and is the whole interval $[0,L]$ if $a=1/2$. Moreover, if $a\ne1/2$, then $t_p\to0$ as $p\downarrow1$ when $a>1/2$, while $t_p\to L$ as $p\downarrow1$ when $a<1/2$, and in all cases $t_p\to L/2$ as $p\to\infty$.
\end{proposition}

\begin{proof}
For $p>1$, strict convexity of $t\mapsto t^p$ on $[0,\infty)$ makes $F_p$ strictly convex on $[0,L]$. The unique minimizer is characterized by
\begin{align*}
F'_p(t)=ap t^{p-1}-(1-a)p(L-t)^{p-1}=0,
\end{align*}
which is equivalent to
$
a t^{p-1}=(1-a)(L-t)^{p-1}$.
Solving this equation gives the displayed formula. For $p=1$,
$F_1(t)=at+(1-a)(L-t)=(2a-1)t+(1-a)L,$
so the minimizer set is immediate. The limits follow directly from a careful analysis of the explicit formula.
\end{proof}

As $p$ grows, $\m^{\phi_p}_\mu$ moves from the geometric median ($p=1$) toward the Chebyshev centre of $\supp(\mu)$. The deterministic map $(a,p)\mapsto\m^{\phi_p}_{a\delta_0+(1-a)\delta_1}$ in the above model is shown in Supplementary Material, Section \ref{AASpherePath}, Figure \ref{fig:placeholder}. Theorem \ref{UnifCons} gives empirical consistency uniformly on compact exponent ranges on which the population p-mean is unique, while Proposition \ref{chebyshev} gives the deterministic endpoint as $p\to\infty$. Also note that no stochastic uniformity over the non-compact index set $[1,\infty]$ is required here.

Further numerical illustrations are deferred to Supplementary Material, Section \ref{AASpherePath}. Figure \ref{fig:s2-consistency} illustrates Theorems \ref{Cons} and \ref{UnifCons} on a compact range of exponents, whereas Figure \ref{fig:s2-trajectory} shows the corresponding empirical path on $\mathbb {S} ^2$, where the target remains nonlinear, but the progression from median-like to minimax-like centres is still visible.

\section{Isotropic Densities on Symmetric Spaces and Uniqueness}\label{SSSym}
The uniform strong law in Theorem \ref{UnifCons} requires the population $\phi$-mean to be a singleton for every $\phi$ in the loss class. The purpose of the present section is to verify that hypothesis on a broad model class and, more strongly, to show that for isotropic decreasing laws on symmetric spaces, the population target is actually independent of the strictly increasing loss.

Concretely, we aim to provide an explicit uniqueness result for isotropic measures whose $\phi$-mean is a singleton for every monotone $\phi$ in the stated class. The uniqueness question has been studied in special cases. For instance, \citet{circle, circle2} give sharp criteria, and asymptotics for the Fr\'echet mean on the circle, \citet{Andy} treats Fr\'echet means more generally, and generalized Fr\'echet-type means also appear in shape-statistical work such as \citet{Huckemann2011Procrustes}.

Throughout the section $\X$ is a connected Riemannian manifold equipped with its geodesic distance $\dd$, and we further assume that $\X$ is a \emph{symmetric space}: a complete Riemannian manifold such that for any two distinct points $x,y\in\X$ there is an isometric involution $R_{x,y}$ of $\X$ exchanging $x$ and $y$. Symmetric spaces were classified in the classical work of \citet{Cartan, Cartan2, Helga}; they include $\R^n$, the spheres, the real, complex, and quaternionic projective spaces, Grassmannians, all compact semi-simple Lie groups, and any Cartesian product of these, so for instance the torus $T^n$.

The measure $\mu$ is taken absolutely continuous with respect to the volume measure on $\X$ \citep{Lee3}. We consider isotropically decreasing probability densities, that is,
$$\mu_{f,\hat x}(\ddd x)=f(\dd(x,\hat x))\,\vol(\ddd x),$$
where $\hat x\in\X$, the normalizing constant is included in $f$, and $f:[0,\infty)\to[0,\infty)$ is non-negative and strictly decreasing on $[0,\diam(\X)]$, with the convention $\diam(\X)=\infty$ when $\X$ is unbounded.
The proof of the uniqueness result at the end of this section uses a reflection argument. For a competitor $y\neq\hat x$, one compares $F^\phi_\mu(y)$ and $F^\phi_\mu(\hat x)$ by pairing points through the isometric involution exchanging $y$ and $\hat x$. The only additional geometric input is that the bisector between these two points has zero volume.

Thus, we use the following geometric fact, whose proof is given in the appendix.
\begin{lemma}\label{bisector}
Let $\X$ be a complete connected Riemannian manifold and $p,q\in\X$ two distinct points. Then the bisector
$$\X^{p,q}:=\{z\in\X:\dd(z,p)=\dd(z,q)\}$$
has volume measure zero.
\end{lemma}

\begin{theorem}\label{symm}
Let $\X$ be a symmetric space and let $\mu=\mu_{f,\hat x}$. Let $\phi:\R_+\to\R_+$ be any strictly increasing function (\emph{not} necessarily convex or in $\PhiEx$) for which $F^\phi_\mu(x)<\infty$ for every $x\in\X$. Then the unique $\phi$-mean of $\mu$ is $\m^\phi_\mu=\{\hat x\}$.
\end{theorem}
The hypothesis on $\phi$ is therefore strictly weaker than elsewhere in the paper: continuity, convexity, and the growth condition $\gamma_\phi<\infty$ are not required. Theorem \ref{symm} also covers bounded strictly increasing losses, for example $\phi(t)=1-e^{-t}$, whenever the corresponding objective is finite everywhere. Redescending losses with flat tails require an additional argument unless all relevant distances lie in a region in which the loss is strictly increasing.
\begin{proof}
Let $y\in \X\setminus\{\hat x\}$ and let $R:=R_{\hat x,y}$ be the involution exchanging $\hat x$ and $y$. Split $\X=\X^+\sqcup \X^0\sqcup \X^-$, where
$$\X^+:=\{x\in \X:\dd(x,\hat x)>\dd(x,y)\},\quad \X^0:=\{x:\dd(x,\hat x)=\dd(x,y)\},\quad \X^-:=\{x:\dd(x,\hat x)<\dd(x,y)\}.$$
Since $R$ is an isometry and exchanges $\hat x$ and $y$, $\dd(Rx,\hat x)=\dd(x,y)$ and $\dd(Rx,y)=\dd(x,\hat x)$. Therefore $R$ maps $\X^+$ bijectively to $\X^-$ and preserves $\X^0$. Moreover, $R$ preserves the volume measure (as an isometry).

By Lemma \ref{bisector}, $\vol(\X^0)=0$, so this piece contributes nothing to $F^\phi_\mu(\hat x)-F^\phi_\mu(y)$. Performing the change of variables $x'=Rx$ on the $\X^-$ piece,
\begin{align*}
F^\phi_\mu(\hat x)-F^\phi_\mu(y)&=\int_{\X^+}\bigl(\phi(\dd(x,\hat x))-\phi(\dd(x,y))\bigr)f(\dd(x,\hat x))\vol(\ddd x)\\
&\quad+\int_{\X^+}\bigl(\phi(\dd(Rx,\hat x))-\phi(\dd(Rx,y))\bigr)f(\dd(Rx,\hat x))\vol(\ddd x)\\
&=\int_{\X^+}\bigl[\phi(\dd(x,\hat x))-\phi(\dd(x,y))\bigr]\bigl[f(\dd(x,\hat x))-f(\dd(Rx,\hat x))\bigr]\vol(\ddd x),
\end{align*}
where in the second integrand we used $\phi(\dd(Rx,\hat x))=\phi(\dd(x,y))$ and $\phi(\dd(Rx,y))=\phi(\dd(x,\hat x))$, so the bracket becomes the negative of the first one.

For $x\in\X^+$ the first bracket is strictly positive (since $\dd(x,\hat x)>\dd(x,y)$ and $\phi$ is strictly increasing) and the second is non-positive (since $\dd(Rx,\hat x)=\dd(x,y)<\dd(x,\hat x)$ and $f$ is non-increasing), so the integrand is pointwise $\leq 0$ on $\X^+$.

Since $y\in\X^+$ and $\X^+$ is open, $\X^+$ contains a non-empty geodesic ball and hence has positive volume. Under the strict radial monotonicity assumption on $f$, the second bracket is strictly negative throughout $\X^+$, while the first bracket is strictly positive. Hence, the paired integrand is strictly negative on a set of positive volume, so $F^\phi_\mu(\hat x)<F^\phi_\mu(y)$. Since $y\in\X\setminus\{\hat x\}$ was arbitrary, $\m^\phi_\mu=\{\hat x\}$.
\end{proof}
In particular, the above result provides us with a broad class of measures for which the uniqueness hypothesis of Theorem \ref{UnifCons} is automatic. Within this class, changing the loss affects the empirical criterion but not the population centre.

Without the symmetry hypothesis, the conclusion actually fails: an isotropic measure $\mu_{f,\hat x}$ on a general manifold need not have a unique $\phi$-mean, and the $\phi$-mean set need not contain $\hat x$.

The next section gives a real-data illustration on the tree space (intended as the space of all ultrametrics on a finite set). In contrast with the isotropic decreasing regime treated here, that singular sample space exhibits loss-dependent behaviour and changes of stratum along the intrinsic $\phi_p$-mean path.

\section{Application on the space of rooted trees}\label{SSTrees}

The preceding results apply to any metric space satisfying the Heine--Borel property.  We now give a real-data illustration on the singular polyhedral space of ultrametrics, equivalently rooted equidistant phylogenetic trees, on a fixed set of labelled leaves.  The statistical point of this example is that the sample space is stratified and the intrinsic $\phi_p$-mean need not remain in a fixed stratum as $p$ varies.  Detailed geometry of the projectivized ultrametric complexes, the Petersen-graph model of $\widetilde{\mathcal T}_4$, and the computational construction of the intrinsic approximation are deferred to the supplementary material, especially Sections \ref{SSTrees-ultrametric}--\ref{SSTrees-ultra-computation}.

Let ${\mathcal T}_n$ denote the projectivized space of ultrametrics on $n$ labelled leaves, equipped with the intrinsic path metric $\rho$ induced by the Euclidean metric on each polyhedral cell.  For data $x_1,\ldots,x_k\in \widetilde{\mathcal T}_n$ and $p\ge1$, the intrinsic empirical objective is
\begin{equation}\label{eq:ultra-objective}
        F_{k,p}^{{\mathcal T}_n}(q)
        :=\frac1k\sum_{j=1}^k \rho(q,x_j)^p,
        \qquad q\in \widetilde{\mathcal T}_n .
\end{equation}
This is the statistic to which the consistency theorems apply.  It differs from ambient Euclidean cophenetic averaging followed by UPGMA or another projection step.

\subsection{The Apicomplexa gene-tree data}\label{SSTrees-data}

The data used in this illustration are the Apicomplexa gene-tree sample of \citet{Kuo08}.  In phylogenetics, \emph{taxa} are the labelled organisms, species, or sequences placed at the leaves of a tree; here, the original data consist of $268$ rooted gene trees inferred from single-copy genes for seven apicomplexan species and one ciliate outgroup.  Apicomplexans include medically and agriculturally important parasitic protists, such as \emph{Plasmodium}, \emph{Cryptosporidium}, \emph{Toxoplasma}, \emph{Babesia}, and \emph{Eimeria}. Different genes can support different evolutionary histories.  Summarizing a collection of gene trees is therefore not only a numerical averaging problem: it is also a way to describe the dominant pattern of gene-tree incongruence.

The full eight-taxon tree space is already too large for the exact low-dimensional visualization used in this paper.  We therefore use the four taxa $\mathrm{Bb},\ \mathrm{Cp},\ \mathrm{Et},\ \mathrm{Pf}$, namely \emph{Babesia bovis}, \emph{Cryptosporidium parvum}, \emph{Eimeria tenella}, and \emph{Plasmodium falciparum}, and regard the resulting computation as an auditable four-taxon illustration.  For each restricted gene tree, we form the six cophenetic distances $(d_{12},d_{13},d_{14},d_{23},d_{24},d_{34})$.  Since the numerical branch lengths supplied in the data need not give an exactly ultrametric set of distances (which would correspond naturally to a rooted equidistant tree), the implementation projects these vectors to the four-taxon ultrametric cone ${\mathcal T}_4$ before computing the centres. 
The computed conical centres are then projectivized only for display in Figure \ref{fig:ultra-pu4-embedding}. The projection and mesh construction are described in Section~\ref{SSTrees-ultra-computation}.

The relevant sample space ${\mathcal T}_4$ is a \emph{stratified space}: informally, it is obtained by gluing ordinary Euclidean pieces, called strata, along lower-dimensional faces. In this tree-space example, a top-dimensional stratum corresponds to a resolved rooted ultrametric topology, while a lower-dimensional stratum corresponds to an unresolved tree in which one or more internal lengths have collapsed. Thus, several different resolved topologies meet along the same lower-dimensional face.  This is exactly the kind of singular geometry in which an intrinsic centre can change not only its numerical coordinates but also its combinatorial tree type as the loss function is varied; see also the general tree-space background in \citet{GAVUSHKIN, StJohn2017, MillerOwenProvan2015}.

Figure \ref{fig:ultra-pu4-embedding} shows the restricted sample and selected nested-mesh approximations to the intrinsic $\phi_p$-means.  The grey points are the projectivized restricted gene trees after projection to ${\mathcal T}_4$, and the coloured points are the corresponding projectivized $\phi_p$-means for several values of $p$.  The path should be interpreted as a geometric summary of the four-taxon gene-tree distribution.  Smaller values of $p$ emphasize the central bulk of the sample, whereas larger values give more weight to distant trees and therefore move the centre toward the more extreme parts of the observed distribution.
In the present numerical approximation, the displayed centres lie close to a one-dimensional stratum in the projectivized space.  This suggests that, after restriction to these four taxa, much of the visible variation is organized along a dominant direction of topological and branch-length disagreement.

The main statistical feature is that the loss parameter can affect the active stratum of the representative tree. In biological terms, this means that different choices of $p$ can lead to different compromises among the observed gene histories. The example illustrates how intrinsic loss-based centres can be used as diagnostic summaries for tree-valued data: they reveal which parts of the gene-tree distribution are stable under changes of loss and which parts are driven by more distant or conflicting gene trees. Selected dendrogram renderings of the computed centres are reported in Figure \ref{fig:ultra-means} of the supplementary material.

\begin{figure}[!htbp]
  \centering
  \includegraphics[width=0.88\linewidth]{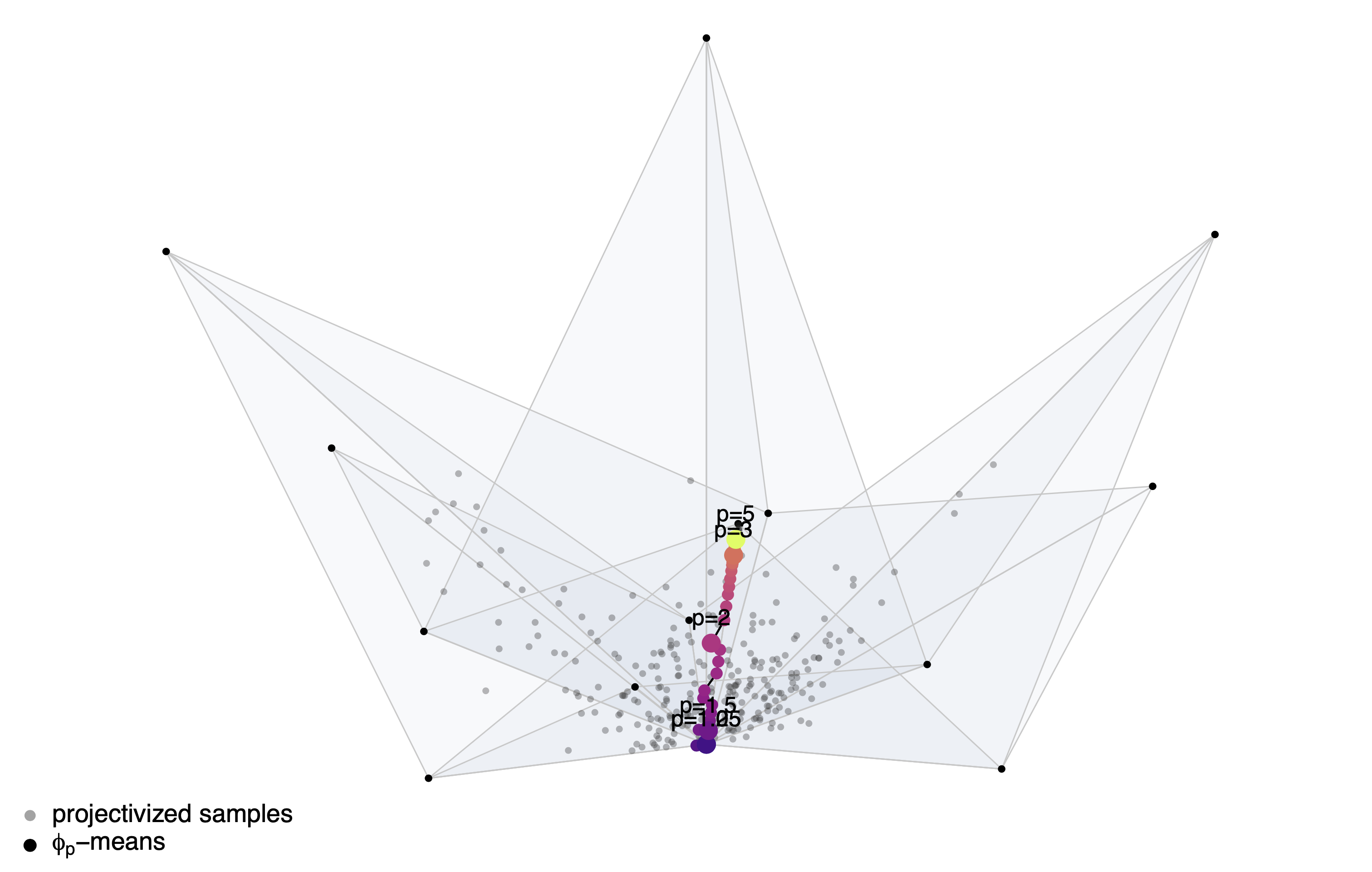}
  \caption{Restricted Apicomplexa sample and selected nested-mesh $\phi_p$-means in a cellwise isometric three-dimensional visualization of $\widetilde{\mathcal T}_4$.  Grey points are the projectivized restricted gene trees after projection to the ultrametric cone ${\mathcal T}_4$; coloured points are the projectivized conical $\phi_p$-means.  The figure is a four-taxon geometric summary of gene-tree incongruence, not a replacement for a full eight-taxon species-tree analysis.}
  \label{fig:ultra-pu4-embedding}
\end{figure}

\section{Discussion and Open Problems}\label{SSDisc}

Theorems \ref{Cons} and \ref{UnifCons} are convergence results under rather lax conditions on very general spaces. Extending them to asymptotic normality is not a formal corollary, since it requires second-order regularity at the population minimizer together with local control of the argmin geometry. Under such hypotheses, for instance, a positive-definite Hessian of $F^\phi_\mu$ on a Riemannian manifold or an analogous quadratic lower bound on a general metric space \citep{Scho2}, standard $M$-estimator theory, such as Theorem 5.23 of \citet{Vaart}, one expects asymptotic normality for each fixed $\phi$. On bounded spaces and for Lipschitz losses, one may likewise expect exponential concentration for the empirical argmin sets, but deriving such bounds would require quantitative refinements of the separation and compact-local comparison arguments used in the proofs. A further natural question is to obtain quantitative rates in the Chebyshev limit of Proposition \ref{chebyshev}, and more generally for other one-parameter loss families. Some of these questions are currently being investigated by the authors.


\begin{acks}[Acknowledgments]\label{acks}
The authors are thankful to Giulio Principi for useful suggestions and to Shreya Arya, Andrew McCormack, Katerina Papagiannouli, Rohit Kumar, Ludovico Crippa, Benedetta Bruni, Andrea Agazzi, and Peter Hoff for discussions.  The authors are especially grateful to Xiongzhi Chen and Stephan F. Huckemann for helpful correspondence concerning the reflection argument in Theorem \ref{symm} and the measure-zero bisector lemma; in particular, Xiongzhi Chen pointed out a sign error in an earlier draft.
\end{acks}

\begin{funding}
AA and MB would like to acknowledge support from the Carlsberg Foundation, grant CF23-1096.
\end{funding}

\bibliographystyle{imsart-nameyear} 
\bibliography{PHI1}       

\newpage
\section*{Supplementary Material}
\appendix

\section{Spaces of Convex Functions}\label{AAAppendix}
Recall that $\Phi$ denotes the collection of convex bijections of $\R_+:=[0,\infty)$ (equivalently, continuous, strictly increasing, convex $\phi:\R_+\to\R_+$ with $\phi(0)=0$), and
$$\PhiEx:=\Bigl\{\phi\in\Phi:\gamma_\phi:=\sup_{x\in\R_+}\frac{\phi(x+1)}{\phi(x)+\phi(1)}<\infty\Bigr\}.$$
The function $x\mapsto\phi(1+x)/(\phi(1)+\phi(x))$ is continuous on $\R_+$, so equivalently
$$\PhiEx=\Bigl\{\phi\in\Phi:\limsup_{x\to\infty}\frac{\phi(1+x)}{\phi(1)+\phi(x)}<\infty\Bigr\}=\Bigl\{\phi\in\Phi:\limsup_{x\to\infty}\frac{\phi(1+x)}{\phi(x)}<\infty\Bigr\}.$$
Since the ratio equals $1$ at $x=0$, $\gamma_\phi\geq 1$, with equality iff $\phi$ is linear, i.e.\ $\phi(x)=x\phi(1)$. Examples: $\phi:x\mapsto x^p$ for $p\geq 1$ ($\gamma_\phi=2^{p-1}$); and $\phi:x\mapsto (p^x-1)/(p-1)$ for $p>1$ ($\gamma_\phi=p$).

Both $\Phi$ and $\PhiEx$ are closed under positive scaling, sum, and pointwise multiplication.  For products, convexity follows from midpoint convexity: if $0\leq x\leq y$, then monotonicity gives $(\phi(y)-\phi(x))(\psi(y)-\psi(x))\geq0$, which is exactly the inequality needed to show $(\phi\psi)((x+y)/2)\leq(\phi(x)\psi(x)+\phi(y)\psi(y))/2$.  Specifically, for $c>0$ and $\phi,\psi\in\PhiEx$,
$$\gamma_{c\phi}=\gamma_\phi,\qquad\gamma_{\phi\psi}\leq 2\gamma_\phi\gamma_\psi,\qquad\gamma_{\phi+\psi}\leq\max(\gamma_\phi,\gamma_\psi).$$
The first equality is clear. For the second,
\begin{align*}
\gamma_{\phi\psi}&=\sup_{x}\frac{\phi(x+1)\psi(x+1)}{\phi(x)\psi(x)+\phi(1)\psi(1)}\\
&=\sup_x \frac{\phi(x+1)\psi(x+1)}{(\phi(x)+\phi(1))(\psi(x)+\psi(1))}\cdot\frac{(\phi(x)+\phi(1))(\psi(x)+\psi(1))}{\phi(x)\psi(x)+\phi(1)\psi(1)}\\
&\leq \gamma_\phi\gamma_\psi\cdot\sup_x\Bigl(1+\frac{\phi(1)\psi(x)+\psi(1)\phi(x)}{\phi(x)\psi(x)+\phi(1)\psi(1)}\Bigr)\leq 2\gamma_\phi\gamma_\psi,
\end{align*}
where the last step uses the fact that the cross-term ratio is bounded above by $1$; indeed
\begin{align*}
\phi(x)\psi(x)+\phi(1)\psi(1)-\phi(1)\psi(x)-\psi(1)\phi(x)
=(\phi(x)-\phi(1))(\psi(x)-\psi(1))\geq0
\end{align*}
by monotonicity. For the third,
\begin{align*}
\gamma_{\phi+\psi}&=\sup_x \frac{\phi(x+1)+\psi(x+1)}{\phi(x)+\psi(x)+\phi(1)+\psi(1)}\\
&=\sup_x\Bigl(\frac{\phi(x+1)}{\phi(x)+\phi(1)}\alpha(x)+\frac{\psi(x+1)}{\psi(x)+\psi(1)}(1-\alpha(x))\Bigr)\leq\max(\gamma_\phi,\gamma_\psi),
\end{align*}
where $\alpha(x):=(\phi(x)+\phi(1))/(\phi(x)+\phi(1)+\psi(x)+\psi(1))\in[0,1]$.

\begin{lemma}\label{powergamma} For any $t>0$,
$$\PhiEx=\Bigl\{\phi\in\Phi:\gamma_{\phi}^{(t)}:=\sup_{x\in\mathbb R_+}\frac{\phi(x+t)}{\phi(x)+\phi(t)}<\infty\Bigr\}=\Bigl\{\phi\in\Phi:\limsup_{x\to\infty}\frac{\phi(x+t)}{\phi(x)}<\infty\Bigr\}.$$
\end{lemma}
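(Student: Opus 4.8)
The plan is to reduce the statement for general $t>0$ to the case $t=1$, which is exactly the characterization of $\Phi_{ex}$ recalled at the beginning of this appendix. Two intermediate facts suffice. First, for every $\phi\in\Phi$ and every $t>0$ one has $\gamma_\phi^{(t)}<\infty$ if and only if $\limsup_{x\to\infty}\phi(x+t)/\phi(x)<\infty$. Second, the step size is irrelevant: $\limsup_{x\to\infty}\phi(x+t)/\phi(x)<\infty$ if and only if $\limsup_{x\to\infty}\phi(x+1)/\phi(x)<\infty$. Chaining these two equivalences with the known $t=1$ characterization immediately yields both displayed identities of the lemma.

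For the first fact I would argue exactly as in the $t=1$ discussion above: the map $x\mapsto \phi(x+t)/(\phi(x)+\phi(t))$ is continuous on $[0,\infty)$ (recall $\phi$ is continuous and $\phi(x)+\phi(t)\ge\phi(t)>0$) and hence bounded on every compact interval, so its supremum over $\R_+$ is finite iff its $\limsup$ at $+\infty$ is; and since $\phi$ is a bijection of $[0,\infty)$ we have $\phi(x)\to\infty$, so $\phi(x)/(\phi(x)+\phi(t))\to 1$ and therefore $\limsup_{x\to\infty}\phi(x+t)/(\phi(x)+\phi(t))=\limsup_{x\to\infty}\phi(x+t)/\phi(x)$. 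This part is routine.

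The real content is the step-size invariance. Write $L_s:=\limsup_{x\to\infty}\phi(x+s)/\phi(x)$ for $s>0$. If $s\le s'$ then $L_s\le L_{s'}$ by monotonicity of $\phi$, so the only nontrivial implication is $L_{\min(t,1)}<\infty\ \Rightarrow\ L_{\max(t,1)}<\infty$. Set $s:=\min(t,1)$, $s':=\max(t,1)$, and $n:=\lceil s'/s\rceil$, so that $ns\ge s'$ and $\phi(x+s')\le\phi(x+ns)$; then telescope $\phi(x+ns)/\phi(x)=\prod_{i=0}^{n-1}\phi(x+(i+1)s)/\phi(x+is)$. Substituting $y=x+is$ shows each factor has $\limsup_{x\to\infty}$ equal to $L_s$, and since the factors are nonnegative with finite $\limsup$, $\limsup_{x\to\infty}\phi(x+ns)/\phi(x)\le L_s^{\,n}<\infty$, whence $L_{s'}\le L_s^{\,n}<\infty$. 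The main obstacle here is precisely the submultiplicativity $\limsup_x\prod_i a_i(x)\le\prod_i\limsup_x a_i(x)$, which is valid because all sequences are nonnegative and each $\limsup$ is finite; everything else is monotonicity of $\phi$ and a change of variables. (Equivalently, one can phrase the whole argument through the rescaling $\phi\mapsto\phi(t\,\cdot)\in\Phi$, for which $\gamma_{\phi(t\cdot)}=\gamma_\phi^{(t)}$, reducing the claim to invariance of $\Phi_{ex}$ under this rescaling — but that invariance is established by the very same telescoping estimate.)
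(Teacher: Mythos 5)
Your proposal is correct and takes essentially the same route as the paper: the nontrivial direction is the telescoping of the ratio over $\lceil t/s\rceil$ increments of size $s=\min(t,1)$, exactly the paper's key step. The only difference is bookkeeping — you pass to $\limsup$s (after justifying the sup-versus-$\limsup$ equivalence) so the factors $\phi(x+(i+1)s)/\phi(x+is)$ cause no trouble near $x=0$, whereas the paper telescopes suprema directly and therefore adds $\phi(s)$ to numerator and denominator of each factor to keep them bounded, obtaining the bound $(1+\gamma_\phi^{(s)})^{\lceil t/s\rceil}$.
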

\begin{proof}
We show that for $0<s<t$, finiteness of $\gamma_\phi^{(s)}$ and $\gamma_\phi^{(t)}$ are equivalent. If $\gamma_\phi^{(t)}<\infty$, then monotonicity gives
$$
\gamma_\phi^{(s)}=\sup_x\frac{\phi(x+s)}{\phi(x)+\phi(s)}
\leq \sup_x \frac{\phi(x+t)}{\phi(x)+\phi(t)}\cdot\frac{\phi(t)+\phi(x)}{\phi(s)+\phi(x)}
\leq \gamma_\phi^{(t)}\frac{\phi(t)}{\phi(s)}<\infty.
$$
Conversely, suppose $\gamma_\phi^{(s)}=\gamma<\infty$ and put $k:=\lceil t/s\rceil$. Iterating
$\phi(u+s)\le \gamma(\phi(u)+\phi(s))$ gives constants $A_k,B_k<\infty$ such that
$$
\phi(x+ks)\le A_k\phi(x)+B_k\qquad\text{for all }x\ge0.
$$
Since $t\le ks$ and $\phi$ is increasing,
$$
\frac{\phi(x+t)}{\phi(x)+\phi(t)}\le
\frac{A_k\phi(x)+B_k}{\phi(x)+\phi(t)}\le A_k+\frac{B_k}{\phi(t)}<\infty.
$$
Thus $\gamma_\phi^{(t)}<\infty$. The equivalence with the displayed limsup follows because $\phi(t)>0$ is fixed and $\phi(x)\to\infty$ as $x\to\infty$.
\end{proof}

\begin{lemma}\label{dominion}
Functions in $\PhiEx$ are dominated by exponentials. If $\phi\in\PhiEx$ with $\gamma=\gamma_\phi>1$, then
$$\phi(x)\leq \gamma\frac{\gamma^x-1}{\gamma-1}\phi(1)\qquad\forall x\geq 0.$$
For $\gamma=1$, $\phi(x)=x\phi(1)$.
\end{lemma}
\begin{proof}
The $\gamma=1$ case follows because then $\phi(x+1)\leq\phi(x)+\phi(1)$ for all $x$. Convexity gives the reverse inequality for the unit increment, $\phi(x+1)-\phi(x)\geq\phi(1)-\phi(0)=\phi(1)$, so equality holds for every unit interval. Since the right derivative of a convex function is non-decreasing, equality of all unit increments forces that derivative to be constant; hence $\phi(x)=x\phi(1)$.

Assume $\gamma>1$. We prove by induction on $\lfloor x\rfloor$ that
$$\phi(x)\leq\phi(1)\sum_{j=1}^{\lfloor x\rfloor}\gamma^j+\gamma^{\lfloor x\rfloor}\phi(x-\lfloor x\rfloor).$$
For $\lfloor x\rfloor=0$, the inequality reduces to $\phi(x)\leq\phi(x)$. Suppose the bound holds for $\lfloor x\rfloor=n$. For $y$ with $\lfloor y\rfloor=n+1$ and $x:=y-1$,
\begin{align*}
\phi(y)=\phi(x+1)&\leq\gamma(\phi(x)+\phi(1))\\
&\leq\gamma\Bigl(\phi(1)+\phi(1)\sum_{j=1}^{\lfloor x\rfloor}\gamma^j+\gamma^{\lfloor x\rfloor}\phi(x-\lfloor x\rfloor)\Bigr)\\
&=\phi(1)\sum_{j=1}^{\lfloor y\rfloor}\gamma^j+\gamma^{\lfloor y\rfloor}\phi(y-\lfloor y\rfloor).
\end{align*}
By convexity and $\phi(0)=0$, $\phi(x-\lfloor x\rfloor)\leq(x-\lfloor x\rfloor)\phi(1)$. Together with $\phi(1)\sum_{j=1}^{\lfloor x\rfloor}\gamma^j=\gamma\phi(1)(\gamma^{\lfloor x\rfloor}-1)/(\gamma-1)$,
$$\phi(x)\leq\frac{\gamma\phi(1)}{\gamma-1}(\gamma^{\lfloor x\rfloor}-1)+\gamma^{\lfloor x\rfloor}(x-\lfloor x\rfloor)\phi(1).$$
Using the elementary inequality $y\leq g(g^y-1)/(g-1)$ for $g>1$, $y\in[0,1]$ (equivalently, $g^y-1\geq y(g-1)/g$, which follows from $\log g\geq (g-1)/g$ and convexity of $u\mapsto g^u$), we conclude
$$\phi(x)\leq \gamma\frac{\gamma^x-1}{\gamma-1}\phi(1).$$
\end{proof}

\begin{lemma}\label{lemDifferences}
Let $\phi\in\PhiEx$ with $\gamma=\gamma_\phi$. Then:
\begin{enumerate}
\item For any $h\in(0,1]$ and $x\geq0$,
$$\phi(x+h)-\phi(x)\leq h\bigl(\gamma\phi(1)+(\gamma-1)\phi(x)\bigr).$$
\item If $a,b,c\geq 0$ satisfy $a\leq b+c$, then $2\phi(a/2)\leq \phi(b)+\phi(c)$.
\end{enumerate}
\end{lemma}
\begin{proof}
(i) By definition of $\gamma_\phi$, $\phi(x+1)\leq \gamma(\phi(x)+\phi(1))$, so $\phi(x+1)-\phi(x)\leq \gamma\phi(1)+(\gamma-1)\phi(x)$. By convexity, for $h\in(0,1]$,
$$\phi(x+h)-\phi(x)\leq h\bigl(\phi(x+1)-\phi(x)\bigr)\leq h\bigl(\gamma\phi(1)+(\gamma-1)\phi(x)\bigr).$$

(ii) If $b\ge a$, then $\phi(b)\ge\phi(a)\ge2\phi(a/2)$ by monotonicity and convexity with $\phi(0)=0$, and the claim follows; the same argument applies if $c\ge a$. We may therefore assume $b<a$ and $c<a$. From $a\le b+c$ we get $a-b\le c$. By convexity,
\begin{align*}
2\phi(a/2)\le \phi(b)+\phi(a-b),
\end{align*}
and by monotonicity $\phi(a-b)\le\phi(c)$. Hence $2\phi(a/2)\le\phi(b)+\phi(c)$.
\end{proof}

As a consequence,
$$\limsup_{x\to\infty}\frac{\phi'_+(x)}{\phi(x)}\leq \limsup_{x\to\infty}\frac{(\gamma_\phi-1)\phi(x)+\gamma_\phi\phi(1)}{\phi(x)}=\gamma_\phi-1.$$

\section{Bisectors on Riemannian Manifolds}\label{AABisector}
We give a self-contained proof of Lemma \ref{bisector}. The result is folklore and can also be deduced from more general statements about subanalytic sets or from the theory of sets with positive reach developed by \citet{Bangert}. A concise proof for complete Riemannian manifolds was also given in the MathOverflow discussion \citep{MathOverflowBisector}. The completeness hypothesis is essential for this formulation: the same discussion gives a non-complete Riemannian manifold for which a bisector has positive volume.

\begin{proof}[Proof of Lemma \ref{bisector}]
Let $n:=\dim\X$. If $n=1$ then a complete connected one-dimensional Riemannian manifold is isometric, up to scale, to $\R$ or to $S^1$, and the bisector of two distinct points consists of at most two points, hence has volume zero.

Assume $n\geq 2$. For $a\in\X$ let $C_a$ denote the cut locus of $a$, which is a closed set with $\vol(C_a)=0$; see \citet[Proposition III.4.1]{Sakai} and \citet[Chapter 11]{BishopCrittenden}. On the open set $\X\setminus(\{a\}\cup C_a)$, the function $\dd(\cdot,a)$ is smooth and its gradient $\nabla\dd(\cdot,a)(z)$ is the unit tangent vector at $z$ of the unique minimizing geodesic from $a$ to $z$, pointing away from $a$; in particular $\|\nabla\dd(\cdot,a)\|\equiv 1$ on this set.

Set
$$U:=\X\setminus\bigl(\{p,q\}\cup C_p\cup C_q\bigr),$$
which is open with $\vol(\X\setminus U)=0$, and define $h:U\to\R$ by $h(z):=\dd(z,p)-\dd(z,q)$. Then $h$ is smooth on $U$ and
$$\|\nabla h(z)\|^2=2-2\langle\nabla\dd(z,p),\nabla\dd(z,q)\rangle.$$
We claim that $0$ is a regular value of $h$. Indeed, if $h(z)=0$ and $\nabla h(z)=0$, then the two unit vectors $\nabla\dd(z,p)$ and $\nabla\dd(z,q)$ coincide. Reversing the two unique minimizing geodesics from $z$ to $p$ and from $z$ to $q$, this says that $p$ and $q$ lie on the same minimizing geodesic ray starting at $z$. Since $p\ne q$, one is strictly farther from $z$ than the other, contradicting $h(z)=0$. Thus $\nabla h\ne0$ on $h^{-1}(0)\cap U$.

By the regular value theorem, $h^{-1}(0)\cap U$ is locally a smooth hypersurface, hence has volume zero. Combining with $\vol(\X\setminus U)=0$ gives
$$\vol\bigl(\X^{p,q}\bigr)=\vol\bigl(h^{-1}(0)\cap U\bigr)+\vol\bigl(\X^{p,q}\cap(\X\setminus U)\bigr)=0.$$
\end{proof}

\section{Supplementary figures}\label{AAOutlier}\label{AASpherePath}
Figure \ref{fig:s2-two-point-outlier} shows the dependence of the empirical displacement on the loss exponent in a one-outlier contamination experiment on $\mathbb S^2$. It complements the discussion of loss-indexed robustness in Section \ref{SSChebyshev}.

\begin{figure}[!htbp]
  \centering
  \includegraphics[width=0.65\linewidth]{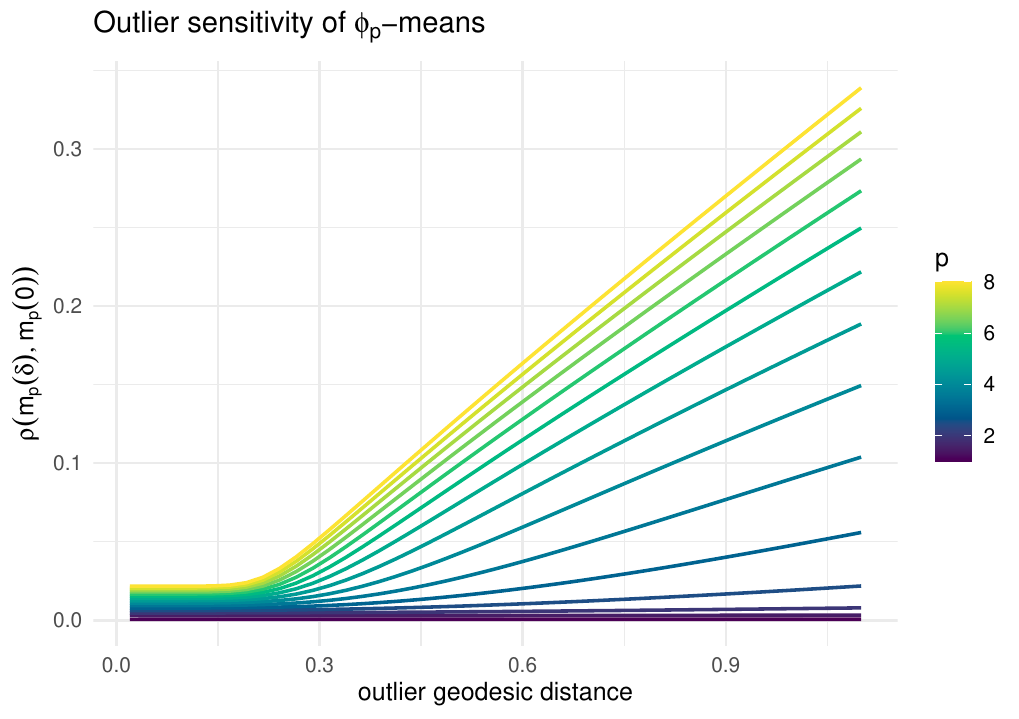}
  \caption{The displacement caused by one outlying observation as a function of its geodesic distance from a concentrated spherical cluster, for several exponents $p$.}
  \label{fig:s2-two-point-outlier}
\end{figure}

Figures \ref{fig:placeholder} complement the discussion in Section \ref{SSChebyshev}. They show the deterministic interpolation in the explicit two-point model, the finite-range uniform consistency experiment, and the corresponding empirical $\phi_p$-mean path on $\mathbb S^2$.

\begin{figure}[!htbp]
    \centering
    \includegraphics[width=0.65\textwidth]{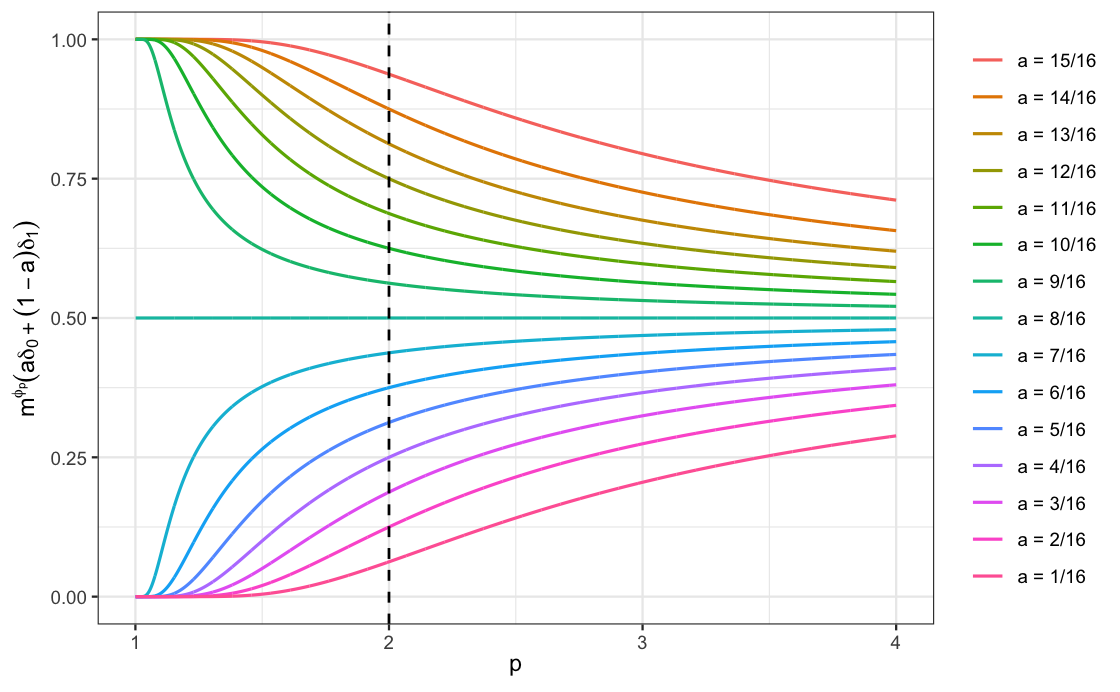}
    \caption{The $\phi_p$-mean of $a\delta_0+(1-a)\delta_1$. For $a=1/16,\ldots,15/16$ and $p\in(1,4]$. Notice that $\mathfrak m^{\phi_2}(a\delta_0+(1-a)\delta_1)=1-a$.}
    \label{fig:placeholder}
\end{figure}

\begin{figure}[!htbp]
  \centering
  \includegraphics[width=0.65\linewidth]{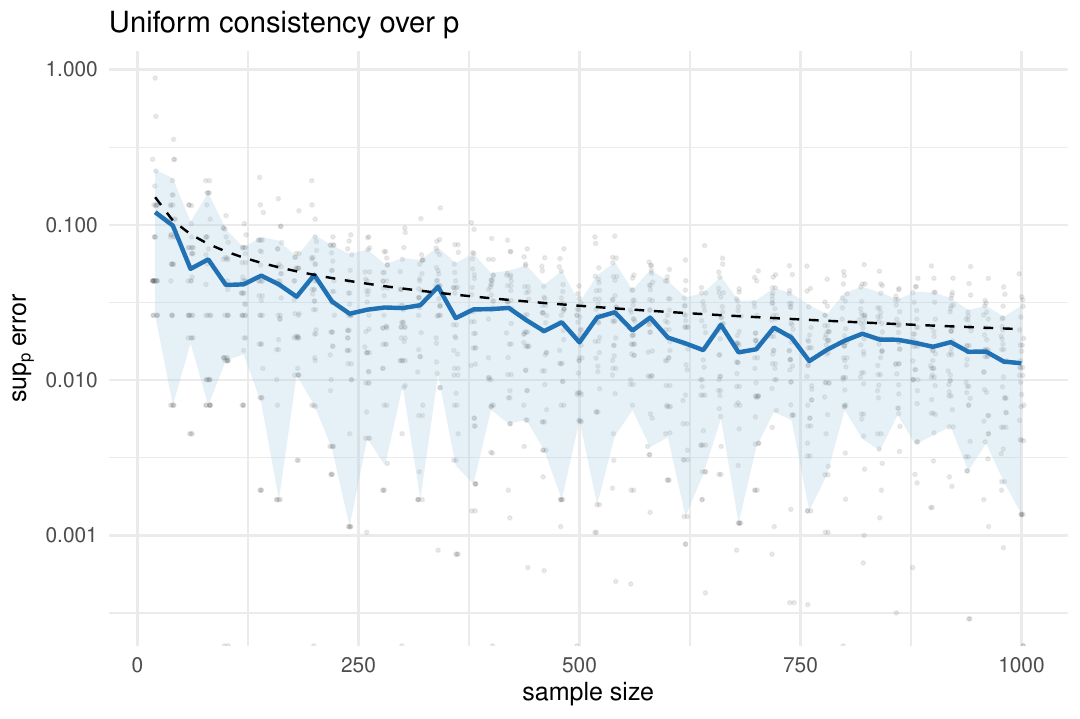}
  \caption{Consistency in the explicit two-point model of Proposition \ref{prop:two-point-phi-p}.  The left panel displays pointwise error over a fine grid $p=1.1,1.2,\ldots,4$ and sample sizes increasing by increments of ten.  The right panel plots the supremum over the same $p$-grid, with an $n^{-1/2}$ reference line.  This figure illustrates Theorems \ref{Cons} and \ref{UnifCons} in a setting where the population $\phi_p$-mean is known exactly.}
  \label{fig:s2-consistency}
\end{figure}

\begin{figure}[!htbp]
  \centering
  \includegraphics[width=0.65\linewidth]{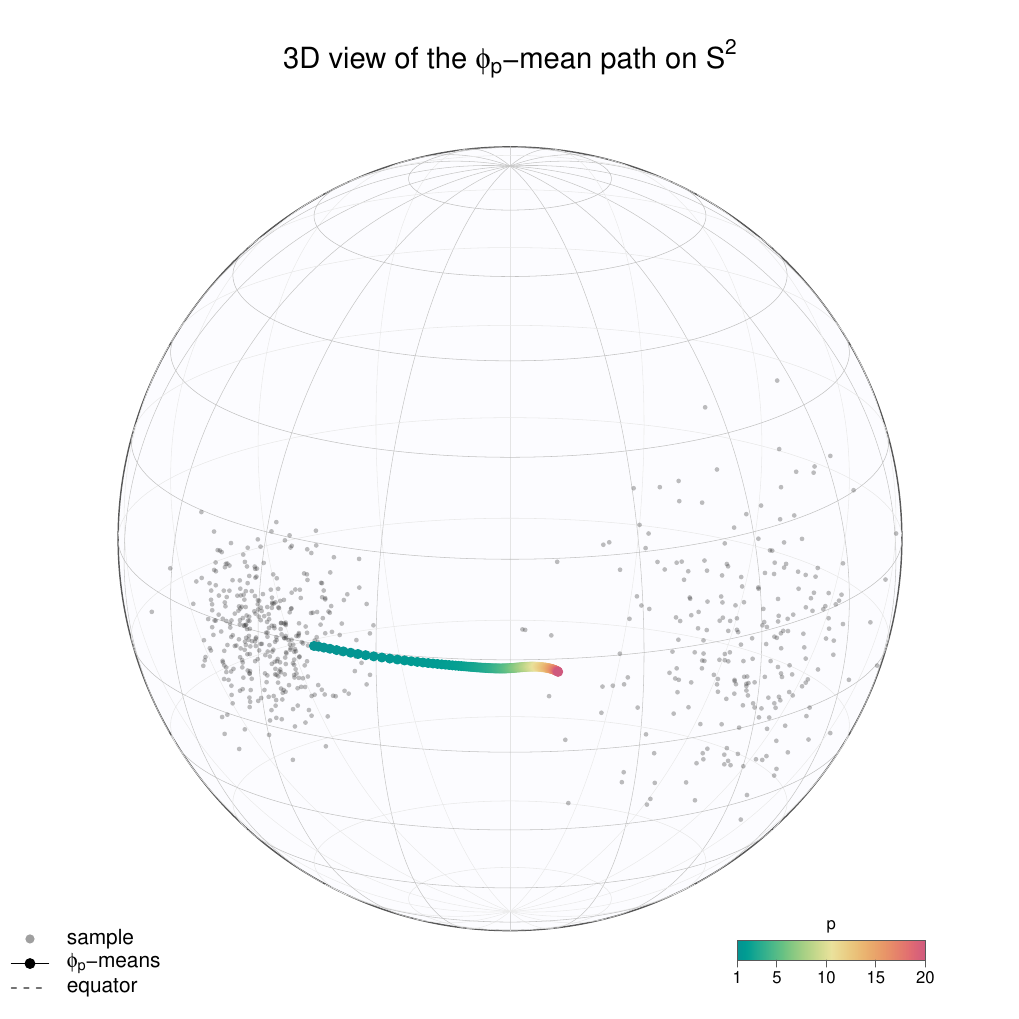}
  \caption{Equatorial two-blob simulation on $S^2$.  The two cluster centres lie on the equator and are symmetrically placed about longitude zero.  Small grey points are observations, and the coloured path is the empirical $\phi_p$-mean for a fine grid of exponents $p\in[1,20]$.  The path starts near the geometric median, passes through the Fr\'echet mean, and moves toward a minimax/Chebyshev-type centre.}
  \label{fig:s2-trajectory}
\end{figure}

\section{Geometry and computation on tree spaces}\label{AAUltrametricApplication}
This section records the low-dimensional geometry and the computational details underlying Section \ref{SSTrees}.  The main text keeps only the real-data illustration; here we collect the structure of the projectivized ultrametric complexes, the Petersen-graph model of $\widetilde{\mathcal T}_4$, the construction of the intrinsic mesh approximation, and the additional figures from the four-taxon Apicomplexa analysis.

\subsection{The ultrametric cone and its projectivization}\label{SSTrees-ultrametric}

Let $[n]=\{1,\ldots,n\}$ and put $N=\binom n2$.  A metric on $[n]$ can be identified with a vector
\begin{align*}
        d=(d_{ij})_{1\le i<j\le n}\in\R^N .
\end{align*}
The coordinate $d_{ij}=d_{ji}$ represents the distance between elements $i$ and $j$.
The collection of possible metrics on $[n]$ is the cone
$$\mathcal M_n:=\{d\in \R^N_{\geq0}: d_{ij}\leq d_{ik}+d_{kj} \text{ for all distinct }i,j,k\}.$$
The sub-cone $\mathcal T_n\subseteq\mathcal M_n$ of \emph{ultrametrics} on $[n]$ is obtained by replacing the triangular inequality with the strong triangular inequality, that is
\begin{align*}
    {\mathcal T}_n:=\left\{d\in\R_{\ge0}^N: d_{ij}\le \max\{d_{ik},d_{jk}\}\text{ for all distinct }i,j,k\right\}.
\end{align*}
Both $\mathcal M_n$ and $\mathcal T_n$ are cones whose  projectivizations (affine slices) we denote by
\begin{align*}
        \widetilde{\mathcal T}_n
        :=\left\{d\in{\mathcal T}_n:\sum_{i<j} d_{ij}=1\right\}\subseteq\left\{d\in{\mathcal M}_n:\sum_{i<j} d_{ij}=1\right\}=:\widetilde{\mathcal M}_n.
\end{align*}
The space $\mathcal M_n$ is a closed convex full-dimensional polyhedral cone in $\R^N$. Its projectivation $\widetilde{\mathcal M}_n$ is a non-uniform convex polytope of dimension $N-1$.
The spaces $\mathcal T_n$ and $\widetilde{\mathcal T}_n$ are piecewise linear (PL) sets of dimensions $n-1$ and $n-2$, respectively.  They are highly non-convex and are not topological manifolds; rather, they are PL-stratified spaces in the standard sense of piecewise Euclidean geometry.  For background on ultrametric tree spaces and CAT(0)/piecewise Euclidean geometry, see \citet{GAVUSHKIN} and \citet{BRIDSON}.
The symmetric group $S_n$ acts naturally on all these spaces by relabeling the elements of $[n]$.  For the full metric cone, \citet{Deza} prove that, for $n\ge5$, every Euclidean linear isometry preserving $\mathcal M_n$ is induced by such a relabeling; the exceptional case $n=4$ has extra ambient symmetries for the full metric cone.
We equip all these spaces with the path metric induced by the Euclidean metric on each flat cell. 

The cell structure of $\widetilde{\mathcal T}_n$ is described by laminar families\footnote{A laminar family is a collection $\mathcal F$ of sets such that if $A, B\in\mathcal F$ then only three options are possible: either $A\cap B=\emptyset$ or $A\subseteq B$ or $B\subseteq A$.}.  For $A\subseteq[n]$ with $|A|\ge2$, define a normalized ray vector $v_A\in \widetilde{\mathcal T}_n$ by
$$(v_A)_{ij}=\left(\binom n2-\binom{|A|}{2}\right)^{-1}\mathbf1_{\{i,j\}\not\subseteq A}$$
for $A\neq[n]$, and set $v_{[n]}=(1/N,\ldots,1/N)$.  If $\mathcal F$ is a laminar family of non-singleton subsets containing $[n]$, then
\begin{align*}
        \sigma_{\mathcal F}:=\operatorname{conv}\{v_A:A\in\mathcal F\}
\end{align*}
is a cell of $\widetilde{\mathcal T}_n$.  The vertex $v_{[n]}$ belongs to every maximal cell, so this description makes clear that $\widetilde{\mathcal T}_n$ is star-shaped with respect to $v_{[n]}$ (but not convex, for $n\ge3$).  Maximal cells correspond to rooted binary trees and have dimension $n-2$ in $\widetilde{\mathcal T}_n$.

The first nontrivial projectivized example is $\widetilde{\mathcal T}_3$, which is already stratified despite being one-dimensional. Figure \ref{fig:T3} identifies it with a tripod inside the projectivized three-point metric simplex and provides the local model for the higher-dimensional ultrametric spaces considered below.

\begin{figure}
    \centering
    \resizebox{0.6\textwidth}{!}{\tikzputhree}
  \caption{The standard simplex
$\Delta_3=\{(d_{1,2},d_{1,3},d_{2,3})\in\mathbb R_{\ge 0}^3:d_{1,2}+d_{1,3}+d_{2,3}=1\}$ is an equilateral triangle in $\R^3$. The set of all metric spaces on three points where the sum of the three distances is one, $\widetilde{\mathcal M}_3$, is a subset of the simplex. In particular, it is the medial equilateral triangle cut out by the three triangular inequalities $d_{1,2}\le d_{1,3}+d_{2,3}$, $d_{1,3}\le d_{1,2}+d_{2,3}$, $d_{2,3}\le d_{1,2}+d_{1,3}$.
The set of all ultrametrics on three points where the sum of the three distances is one, $\widetilde{\mathcal T}_3\subseteq \widetilde{\mathcal M}_3$ is the tripod $\{d_{1,2}=d_{1,3}\ge d_{2,3}\}\cup\{d_{1,2}=d_{2,3}\ge d_{1,3}\}\cup\{d_{1,3}=d_{2,3}\ge d_{1,2}\}$,
whose three outer vertices correspond to the degenerate rooted trees where leaves are at zero distance, and whose central vertex is the tree whose root has degree three and whose leaves are all at distance $1/6$ from the root. The maximal strata correspond to maximal laminar families and to the rooted binary trees. For instance $\mathrm{conv}\{(0,1,1)/2,(1,1,1)/3\}=\mathrm{conv}\{v_{\{1,2\}},v_{\{1,2,3\}}\}$ corresponds to the maximal laminar family $\{\{1,2\},\{1,2,3\}\}$ and the binary tree in the picture. Accidentally, the projectivized space of trees on three nodes is a tree with three nodes.}
  \label{fig:T3}
\end{figure}

\subsection{The projectivized four-leaf space}\label{SSTrees-PU4}

The first two-dimensional case is $\widetilde{\mathcal T}_4$.  It is a two-dimensional polyhedral complex with one distinguished vertex $v_{1234}$, ten one-dimensional strata, and fifteen two-dimensional cells.  The ten one-dimensional strata are indexed by the nontrivial proper clusters
\begin{align*}
  12,13,14,23,24,34,123,124,134,234.
\end{align*}
Two such clusters are compatible if they are nested or disjoint.  The compatibility graph on these ten clusters is the Petersen graph, and the fifteen edges of that graph correspond exactly to the fifteen two-dimensional cells of $\widetilde{\mathcal T}_4$.  Hence $\widetilde{\mathcal T}_4$ is the cone over the Petersen graph, with apex $v_{1234}$.

\begin{figure}[!htbp]
  \centering
  \resizebox{0.82\textwidth}{!}{\tikzpufourpetersen}
  \caption{The projectivized ultrametric space $\widetilde{\mathcal T}_4$, as the cone over the Petersen graph. The centre is the fully unresolved equidistant tree $v_{1234}$. The ten boxed vertex glyphs represent the one-dimensional strata, indexed by the nontrivial proper clusters of $\{1,2,3,4\}$; the fifteen boxed edge glyphs represent the two-dimensional cells, indexed by compatible cluster pairs.  This picture is a model of the ultrametric/equidistant rooted-tree space.}
  \label{fig:pu4-petersen}
\end{figure}

Figure \ref{fig:pu4-petersen} makes explicit the stratified geometry relevant for the $\phi$-means computations. In $\widetilde{\mathcal T}_4$, a geodesic may pass through edges, vertices, or two-dimensional cells, and the same phenomenon becomes combinatorially richer for $\widetilde{\mathcal T}_5$ and beyond.

\subsection{Computational aspects and additional figures for the Apicomplexa example}\label{SSTrees-ultra-computation}

In general, to compute $\phi$-means on $\mathcal T_n$, we first need to compute distances between points, that is, the lengths of the minimizing geodesics connecting the two points. This can be done efficiently since, on each stratum, geodesics are just straight lines. Then, to compute the minimizing geodesic between points on different strata, we determine the optimal cell path and then solve a standard convex optimization problem to determine the optimal crossing points between adjacent cells. Concretely, we consider many possible cell paths and then minimize over these possible paths.

Once distances between points are available, in order to find the $\phi$-means of a sample $(x_j)_{j=1}^S$, we proceed by looking for minimizers of $x\mapsto \sum_{j=1}^S \phi(\dd(x,x_j))$ on a fine grid of candidate points covering $\mathcal T_n$. Once a minimizer has been found among the points of this grid, a further, finer grid localized around this first candidate can be constructed, and the operation can be repeated.

The code we used for Section \ref{SSTrees-data} provides a numerical approximation to the continuous optimization problem of finding $\phi_p$-means on $\mathcal T_4$. It is carried out only for the four-taxon three-dimensional ultrametric cone $\mathcal T_4$, whose projectivization is two-dimensional and can be isometrically immersed in $\R^3$ for an effective visual inspection.

The input trees are loaded from the \texttt{apicomplexa} object in the R package \texttt{Rtropical} \citep{RtropicalPackage}. The code then restricts each tree to the chosen four taxa and forms its six-coordinate cophenetic vector $(d_{12},d_{13},d_{14},d_{23},d_{24},d_{34})\in\R^6.$
After restriction, these raw vectors need not be exactly ultrametric. The default preprocessing therefore projects each raw vector to the nearest point of $\mathcal T_4$, using active-set least squares over the eighteen three-ray cone cells.

The expensive step is the evaluation of intrinsic distances between sample points and candidate centres. The code exploits the fact that $\mathcal T_4$ is a Euclidean metric cone.  If $u$ and $v$ are unit representatives of two projective directions and $\theta(u,v)$ is their intrinsic link distance, then
\begin{equation}\label{eq:cone-link-distance}
        d_{\mathcal T_4}(r u,s v)^2
        =r^2+s^2-2rs\cos\bigl(\min\{\theta(u,v),\pi\}\bigr),
        \qquad r,s\ge0.
\end{equation}
Thus, the code computes the intrinsic link distances once for a finite set of projective directions and then optimizes the radial coordinate continuously.  For a fixed projective candidate $u$ and exponent $p$, the one-dimensional problem
$$\min_{r\ge0}\;\frac1k\sum_{j=1}^k d_{\mathcal T_4}(r u,x_j)^p$$
is solved numerically by a bounded one-dimensional search.  This effectively reduces by one the dimension of the distance computation problem.

Projective candidates are generated by successive nested grids.  The first stage is global; later stages build finer local grids around the previously selected $\phi_p$-means, including neighbouring cells and a prescribed minimum number of nearby projective directions.

For the link-distance computation, the code enumerates admissible cell corridors in the triangulated complex. On each corridor, it first attempts an exact unfolding calculation: adjacent Euclidean triangles are unfolded across shared faces, and the resulting straight segment is accepted when its crossing points lie in the required faces. The resulting distance matrices are cached.

The main-text visualization in Figure \ref{fig:ultra-pu4-embedding} shows the projectivized samples and $\phi$-means in a cellwise isometric three-dimensional immersion of $\widetilde{\mathcal T}_4$. This is supplemented by Figure \ref{fig:ultra-means}, which renders selected computed ultrametric centres as dendrograms. These dendrograms are visualizations of the selected points of \(\mathcal T_4\), not their projectivation.

\begin{figure}[!htbp]
  \centering
  \includegraphics[width=\linewidth]{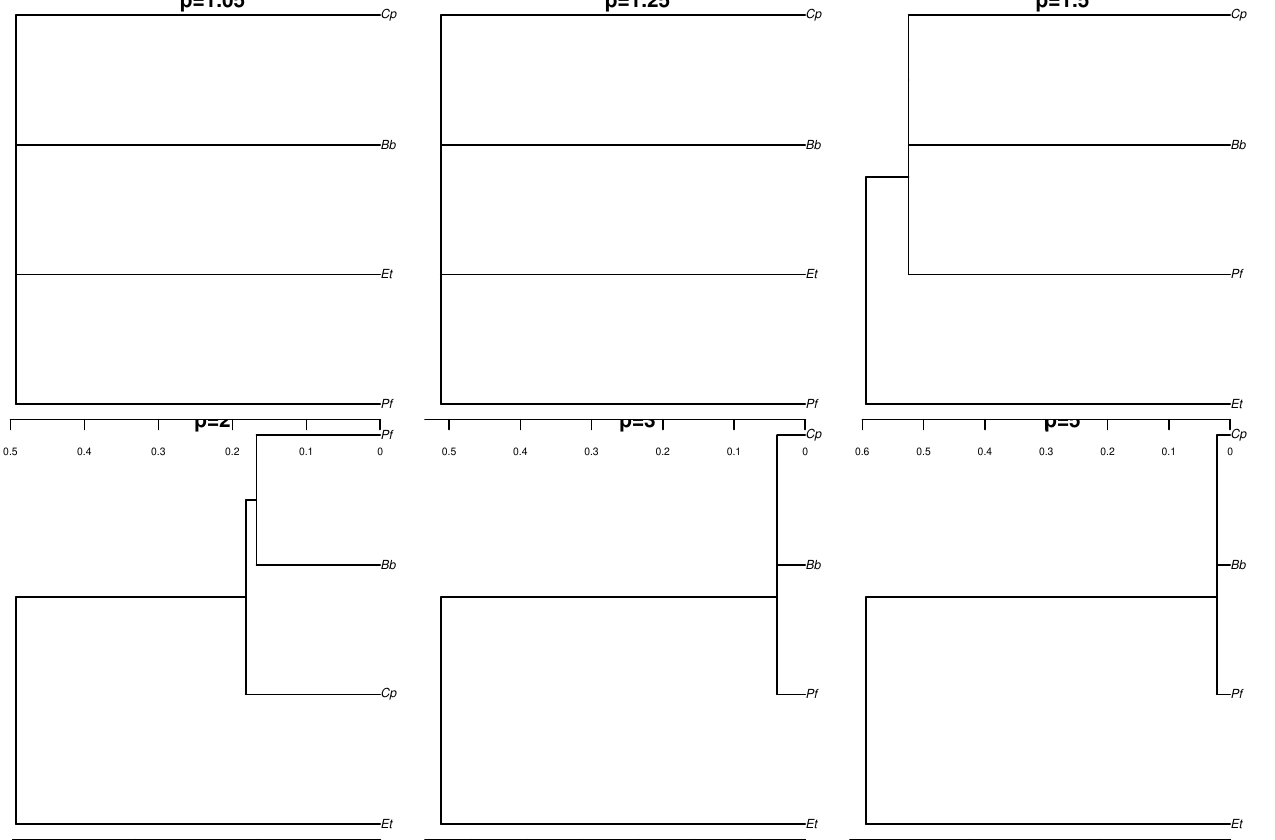}
  \caption{Dendrogram renderings of selected nested-mesh $\phi_p$-means in \(\mathcal T_4\).  The renderings are for interpretation only; the optimization is performed intrinsically in the four-taxon ultrametric cone.}
  \label{fig:ultra-means}
\end{figure}

\section{Isometric Immersions of Tree Spaces}\label{AAUltrametricSymmetries}\label{app:Un-symmetries}

We provide the symmetry statement used in the ultrametric application. Since the projectivized space lies in an affine hyperplane, the natural ambient space for its Euclidean symmetries is that affine span. Put $N=\binom n2$ and
$$\Delta_N=\left\{d\in\R^N:\sum_{i<j}d_{ij}=1\right\}.$$
The group considered below consists of Euclidean affine isometries of $\Delta_N$ that preserve the standard laminar cell complex. Equivalently, these
are the cellular Euclidean symmetries of the projective ultrametric complex with the metric induced from the coordinate embedding.
The symmetric group $S_n$ acts on $\R^N$ by permuting unordered pairs:
$$(\sigma\cdot d)_{ij}=d_{\sigma^{-1}(i)\sigma^{-1}(j)}.$$
This action is orthogonal and preserves both the ultrametric cone
${\mathcal T}_n$ and the projective slice $\widetilde{\mathcal T}_n={\mathcal T}_n\cap \Delta_N.$
For $A\subseteq[n]$ with $|A|\ge2$, define a normalized ray vector $v_A\in \widetilde{\mathcal T}_n$ by
$$(v_A)_{ij}=\left(\binom n2-\binom{|A|}{2}\right)^{-1}\mathbf1_{\{i,j\}\not\subseteq A}$$
for $A\neq[n]$, and set $v_{[n]}=(1/N,\ldots,1/N)$.
The cells of $\widetilde{\mathcal T}_n$ are precisely
\begin{align*}
\sigma_{\mathcal F}
=
\operatorname{conv}\{v_A:A\in\mathcal F\},
\end{align*}
where $\mathcal F$ is a laminar family of non-singleton subsets containing
$[n]$.

\begin{proposition}[Cellular ambient symmetry group of the standard ultrametric embedding]\label{prop:Un-symmetry}
For $n\ge3$, the natural action of $S_n$ induces an isomorphism
\begin{align*}
S_n\cong
\operatorname{Aut}^{\rm cell}_{\rm Euc (\Delta_N)}\bigl(\widetilde{\mathcal T}_n\bigr)=\operatorname{Aut}^{\rm cell}_{\rm Euc (\R^N)}\bigl({\mathcal T}_n\bigr),
\end{align*}
where the group on the right denotes Euclidean affine isometries of $\Delta_N$ that carry $\widetilde{\mathcal T}_n$ to itself and carry each canonical laminar cell to a canonical laminar cell.  In other words, every cellular Euclidean symmetry of the projective ultrametric complex, considered in its affine span, is induced by a unique relabeling of the leaves:
\begin{align*}
T(d)_{ij}=d_{\sigma^{-1}(i)\sigma^{-1}(j)}
\end{align*}
for some $\sigma\in S_n$.
For the standard conical embedding ${\mathcal T}_n\subset\R^N$, the cellular linear orthogonal symmetry group is again the natural copy of $S_n$.
\end{proposition}

\begin{proof}
The inclusion of $S_n$ is immediate from the definition of the ultrametric inequalities, the laminar cells, and the Euclidean inner product.

Conversely, let $T$ be a cellular Euclidean isometry of
$\widetilde{\mathcal T}_n\subset \Delta_N$.  Since $T$ is cellular, it permutes the zero-cells $v_A$.  The vertex $v_{[n]}$ is combinatorially distinguished: it belongs to every maximal simplex, because every maximal laminar family contains $[n]$, whereas no other vertex has this property.
Indeed, if $A\subsetneq[n]$ is a proper non-singleton cluster, choose $a\in A$ and $b\notin A$; a binary laminar family containing the cherry $\{a,b\}$ is incompatible with $A$ and hence avoids $A$.
Therefore
\begin{align*}
T(v_{[n]})=v_{[n]}.
\end{align*}

For a proper cluster $A\subsetneq[n]$ with $|A|=k$, a direct computation gives
\begin{align*}
\|v_A-v_{[n]}\|^2
=
\frac{\binom k2}{N\left(N-\binom k2\right)}.
\end{align*}
This quantity is strictly increasing in $k$ for $2\le k<n$.  Hence, any cellular Euclidean symmetry preserves the cardinality of the proper cluster indexing a zero-cell.  In particular, it preserves the set of $(n-1)$-cluster vertices
\begin{align*}
v_{[n]\setminus\{i\}},
\qquad i=1,\ldots,n.
\end{align*}
Thus $T$ determines a permutation $\sigma\in S_n$ by
\begin{align*}
T\bigl(v_{[n]\setminus\{i\}}\bigr)
=
v_{[n]\setminus\{\sigma(i)\}}.
\end{align*}

We now recover the image of every other vertex from incidence with these $(n-1)$-cluster vertices.  Let $A\subsetneq[n]$ be a non-singleton cluster with $|A|\le n-2$.  Then $A$ is compatible with $[n]\setminus\{i\}$ if and only if $i\notin A$.  Equivalently, $v_A$ is adjacent in the one-skeleton to exactly those vertices $v_{[n]\setminus\{i\}}$ with $i\notin A$.  Since $T$ preserves the canonical cell complex and hence the one-skeleton incidence relation, if $T(v_A)=v_B$, then
\begin{align*}
[n]\setminus B
=
\sigma([n]\setminus A),
\end{align*}
and so $B=\sigma(A)$. Therefore, $T$ agrees with the leaf relabeling
$\sigma$ on every zero-cell.

Finally, the two-cluster vertices affinely span $\Delta_N$.  Indeed, for $|A|=2$, the points $v_A$ are affinely independent and lie in $\Delta_N$.  Hence, an affine isometry of $\Delta_N$ is determined by its values on these vertices. The restriction of $T$ to $\Delta_N$ is therefore exactly the coordinate permutation induced by $\sigma$.  This proves the projective statement.

For the cone ${\mathcal T}_n$, a cellular linear orthogonal symmetry permutes the extreme rays.  The same argument applies to these rays: the ray $\R_{\ge0}v_{[n]}$ is the unique ray contained in all maximal cones, the angles from it distinguish cluster cardinalities, and incidence with the $(n-1)$-cluster rays recovers the labels. Since the two-cluster ray vectors linearly span $\R^N$, the orthogonal symmetry is the coordinate permutation induced by a unique $\sigma\in S_n$.
\end{proof}

\begin{remark}[Exact four-leaf symbolic check]\label{rem:PU4-mathematica-symmetry}
For $n=4$, the preceding conclusion was also checked by a finite exact
calculation performed in Mathematica. In particular, we constructed the eleven rational vertices $v_A\in \Delta_6\subset\mathbb Q^6$, computed their exact squared-distance matrix, and enumerated all distance-matrix automorphisms.
We verified that the finite metric carried by the embedded zero-skeleton has symmetry group $S_4$.  We also verified the automorphism group of the Petersen incidence graph of the proper clusters, which is the group $S_5$ of order 120. Thus, the additional $S_5$-symmetries of the abstract Petersen graph are not symmetries of the standard Euclidean ultrametric embedding.
The exact squared-distance values are given in Table \ref{tab:u4-eleven-vertices}.

\begin{table}
\centering
\small
\begin{minipage}[t]{0.45\textwidth}
\centering
\begin{tabular}{c|c}
$v_A$ & $(d_{12},d_{13},d_{14},d_{23},d_{24},d_{34})$\\\hline
$v_{12}$ & $(0,\nicefrac15,\nicefrac15,\nicefrac15,\nicefrac15,\nicefrac15)$\\
$v_{13}$ & $(\nicefrac15,0,\nicefrac15,\nicefrac15,\nicefrac15,\nicefrac15)$\\
$v_{14}$ & $(\nicefrac15,\nicefrac15,0,\nicefrac15,\nicefrac15,\nicefrac15)$\\
$v_{23}$ & $(\nicefrac15,\nicefrac15,\nicefrac15,0,\nicefrac15,\nicefrac15)$\\
$v_{24}$ & $(\nicefrac15,\nicefrac15,\nicefrac15,\nicefrac15,0,\nicefrac15)$\\
$v_{34}$ & $(\nicefrac15,\nicefrac15,\nicefrac15,\nicefrac15,\nicefrac15,0)$\\
$v_{123}$ & $(0,0,\nicefrac13,0,\nicefrac13,\nicefrac13)$\\
$v_{124}$ & $(0,\nicefrac13,0,\nicefrac13,0,\nicefrac13)$\\
$v_{134}$ & $(\nicefrac13,0,0,\nicefrac13,\nicefrac13,0)$\\
$v_{234}$ & $(\nicefrac13,\nicefrac13,\nicefrac13,0,0,0)$\\
$v_{1234}$ & $(\nicefrac16,\nicefrac16,\nicefrac16,\nicefrac16,\nicefrac16,\nicefrac16)$\\
\end{tabular}
\smallskip

\textbf{(a)} Coordinates of the eleven vertices $v_A$ of $\widetilde{\mathcal T}_4\subseteq \Delta_6$ for all $A\subseteq[4]$ with $|A|>1$.
\end{minipage}
\hfill
\begin{minipage}[t]{0.45\textwidth}
\centering

\begin{tabular}{c|c|c}
\text{pair of vertices} & $\|v_A-v_B\|^2$ & \text{edge?}\\
\hline
$v_{ij},v_{1234}$&$\nicefrac{1}{30}=0.0\bar{3}$& \text{yes}\\
$v_{ij},v_{kl}$&$\nicefrac{2}{25}=0.08$& \text{yes}\\
$v_{ij},v_{ik}$&$\nicefrac{2}{25}=0.08$& \text{no}\\
$v_{ij},v_{ijk}$&$\nicefrac{2}{15}=0.1\bar3$& \text{yes}\\
$v_{ijk},v_{1234}$&$\nicefrac{1}{6}=0.1\bar6$& \text{yes}\\
$v_{ij},v_{ikl}$&$\nicefrac{4}{15}=0.2\bar6$& \text{no}\\
$v_{ijk},v_{ijl}$&$\nicefrac{4}{9}=0.\bar{4}$& \text{no}\\
\end{tabular}
\smallskip
\textbf{(b)} Squared Euclidean chord lengths.  Rows marked "yes" are actual edges of the complex, hence their lengths are fixed by any cellwise isometric immersion; rows marked ``no'' are non-edge chord lengths in this standard embedding.
\end{minipage}
\caption{The eleven standard vertices of $\widetilde{\mathcal T}_4\subseteq\Delta_6$ and the corresponding cases for squared Euclidean chord lengths.}
\label{tab:u4-eleven-vertices}
\end{table}

\end{remark}

\begin{remark}[The exceptional-looking $\widetilde{\mathcal T}_4$ picture]
For $n=4$, the link of the distinguished vertex $v_{1234}$ is the Petersen
graph. The abstract Petersen graph has automorphism group $S_5$, but the
standard Euclidean metric on the zero-skeleton distinguishes the full-cluster
vertex, the six two-cluster vertices, and the four three-cluster vertices, and
then the exact pair--triple distances recover the leaf labels. Consequently,
the displayed Petersen model has only the expected leaf-relabeling symmetries
as an embedded ultrametric object, even though the underlying abstract graph has
a larger automorphism group.
\end{remark}

\section{The \texorpdfstring{$\mathbb Z_3$}{Z3}-Symmetric Four-Leaf Projective Immersions: Classification}\label{app:PU4-Z3-summary}

We now complete the finite algebraic check for the $\mathbb Z_3$-symmetric
realizations of $\widetilde{\mathcal T}_4$ in $\mathbb R^3$.  The vertices of
$\widetilde{\mathcal T}_4$ are indexed by the non-singleton subsets
$A\subseteq\{1,2,3,4\}$.  We write $v_A$ for the corresponding vertex and
write $v_{1234}$ for the fully unresolved vertex.  A top-dimensional cell is
spanned by $v_{1234}$ and two compatible proper clusters.  Hence, a simplicial
map is cellwise isometric if and only if it preserves all one-skeleton edge
lengths, namely the ten edges from $v_{1234}$ to a proper cluster and the
fifteen Petersen edges between compatible proper clusters.

Let $R$ denote rotation by $2\pi/3$ around the $z$-axis,
\begin{align*}
R(x,y,z)=\left(-\frac{x}{2}-\frac{\sqrt3}{2}y,\frac{\sqrt3}{2}x-\frac{y}{2},z\right).
\end{align*}
After quotienting by Euclidean rigid motions and by reflection in a vertical
plane, every normalized $\mathbb Z_3$-equivariant realization can be written in
the following form:
\begin{align*}
I(v_{1234})=0,\qquad
I(v_{123})=Q:=\left(0,0,\frac{\sqrt6}{6}\right),
\end{align*}
\begin{align*}
I(v_{12})=A:=\left(\frac{\sqrt6}{15},0,\frac{\sqrt6}{30}\right),
\qquad
I(v_{14})=B=(x,y,z),
\qquad
I(v_{124})=C=(X,Y,Z).
\end{align*}
The remaining vertices are then forced by $\mathbb Z_3$-equivariance:
$$ I(v_{23})=RA,\ I(v_{13})=R^2A,\ I(v_{24})=RB,\ I(v_{34})=R^2B,\ I(v_{234})=RC,\ I(v_{134})=R^2C. $$
The constants above are forced by the three edge lengths
\begin{align*}
\|v_{123}-v_{1234}\|^2=\nicefrac16,
\qquad
\|v_{12}-v_{1234}\|^2=\nicefrac1{30},
\qquad
\|v_{12}-v_{123}\|^2=\nicefrac2{15}.
\end{align*}
The remaining edge-length constraints are equivalent to the six equations
\begin{align}
\|B\|^2&=\nicefrac1{30},
&
\|C\|^2&=\nicefrac16,\label{eq:PU4Z3-system-a}\\
\langle A,R^2B\rangle&=-\nicefrac1{150},
&
\langle A,C\rangle&=\nicefrac1{30},\label{eq:PU4Z3-system-b}\\
\langle B,C\rangle&=\nicefrac1{30},
&
\langle RB,C\rangle&=\nicefrac1{30}.\label{eq:PU4Z3-system-c}
\end{align}
Indeed, these equations represent respectively the edge lengths
from $v_{14}$ and $v_{124}$ to $v_{1234}$, the balanced edge
$12|34$, the nested edge $12<124$, and the two nested edges
$14<124$ and $24<124$.  Applying $R$ gives all other Petersen-edge constraints.

Set $s_2=\sqrt2$, $s_3=\sqrt3$, and $s_6=\sqrt6$.  From
$\langle A,C\rangle=1/30$ we get $X=(1/s_6-Z)/2.$
Eliminating $X,Y,x,y,z$ from \eqref{eq:PU4Z3-system-a}--\eqref{eq:PU4Z3-system-c} gives the univariate equation
\begin{align*}
Z\left(Z-\frac{s_6}{6}\right)\left(Z^2-\frac1{18}\right)=0.
\quad \textit{Thus,}\quad
Z\in\left\{-\frac{s_2}{6},\,0,\,\frac{s_2}{6},\,\frac{s_6}{6}\right\}.
\end{align*}
For each such $Z$, the value of $X$ is as above and
\begin{align*}
Y^2=\frac16-Z^2-X^2.
\end{align*}
If $Z\ne s_6/6$, the two signs of $Y$ give two solutions and then
\eqref{eq:PU4Z3-system-b}--\eqref{eq:PU4Z3-system-c} determine $B$ uniquely.
If $Z=s_6/6$, then $C=Q$ and the linear equations for $B$ have two solutions. Consequently, there are exactly eight normalized $\mathbb Z_3$-equivariant cellwise isometric realizations.  They are the following:
$$
\begin{array}{c|c|c}
\# & C=(X,Y,Z) & B=(x,y,z)\\
\hline
1 & \left(\dfrac{s_2+s_6}{12},\dfrac{\sqrt{2-s_3}}{6},-\dfrac{s_2}{6}\right)
  & \left(\dfrac{s_2-s_6}{30},\dfrac{s_6-s_2}{30},-\dfrac{2s_2+s_6}{30}\right)\\[3mm]
2 & \left(\dfrac{s_2+s_6}{12},-\dfrac{\sqrt{2-s_3}}{6},-\dfrac{s_2}{6}\right)
  & \left(\dfrac{s_6-2s_2}{30},\dfrac{s_2}{30},-\dfrac{2s_2+s_6}{30}\right)\\[3mm]
3 & \left(\dfrac{s_6}{12},\dfrac{s_2}{4},0\right)
  & \left(\dfrac{s_6}{15},0,\dfrac{s_6}{30}\right)\\[3mm]
4 & \left(\dfrac{s_6}{12},-\dfrac{s_2}{4},0\right)
  & \left(-\dfrac{s_6}{30},-\dfrac{s_2}{10},\dfrac{s_6}{30}\right)\\[3mm]
5 & \left(\dfrac{s_6-s_2}{12},\dfrac{\sqrt{2+s_3}}{6},\dfrac{s_2}{6}\right)
  & \left(\dfrac{s_6+2s_2}{30},\dfrac{s_2}{30},\dfrac{2s_2-s_6}{30}\right)\\[3mm]
6 & \left(\dfrac{s_6-s_2}{12},-\dfrac{\sqrt{2+s_3}}{6},\dfrac{s_2}{6}\right)
  & \left(-\dfrac{s_6+s_2}{30},-\dfrac{s_6+s_2}{30},\dfrac{2s_2-s_6}{30}\right)\\[3mm]
7 & \left(0,0,\dfrac{s_6}{6}\right)
  & \left(-\dfrac{s_6}{30},-\dfrac{s_2}{10},\dfrac{s_6}{30}\right)\\[3mm]
8 & \left(0,0,\dfrac{s_6}{6}\right)
  & \left(\dfrac{s_6}{15},0,\dfrac{s_6}{30}\right).
\end{array}
$$
Together with the $R$-orbits described above, this table gives every normalized $\mathbb Z_3$-symmetric cellwise isometric realization of $\widetilde{\mathcal T}_4$ in $\mathbb R^3$.

The first, second, fifth, and sixth solutions separate all eleven vertices of
$\widetilde{\mathcal T}_4$.  The third and fourth identify the orbit
$\{12,23,13\}$ with the orbit $\{14,24,34\}$ in one of the two possible cyclic
alignments. The seventh and eighth additionally collapse the orbit
$\{124,234,134\}$ to the fixed point $v_{123}$ on the symmetry axis. Thus, if
one requires the realization to be injective on the zero-skeleton, exactly four
normalized $\mathbb Z_3$-symmetric solutions remain, namely rows $1,2,5,6$.

The classification above concerns the projective slice $\widetilde{\mathcal T}_4$.
It remains to check whether one can choose a cone-apex vector so that the projective realization extends to an isometric realization of the full cone ${\mathcal T}_4$ with its ambient Euclidean metric.  The answer is negative in $\mathbb R^3$ and positive after adding one orthogonal coordinate; this is the content of the next section.

\begin{figure}[!htbp]
  \centering
  \includegraphics[width=0.98\linewidth]{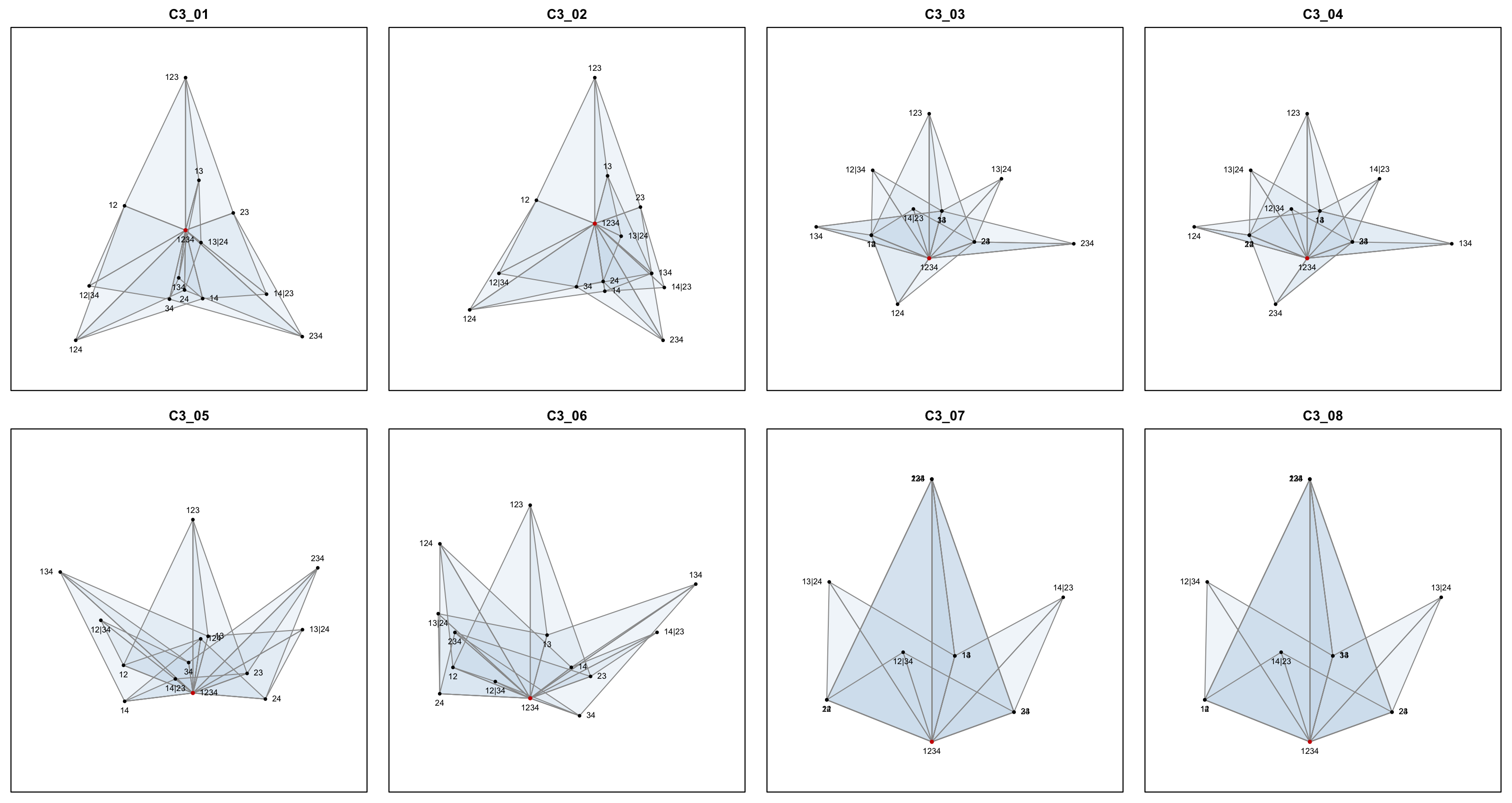}
  \caption{All eight $\Z_3$-symmetric isometric immersions of $\widetilde{\mathcal T}_4$.}
  \label{fig:wowimmersions}
\end{figure}

\section{Isometric Immersions of \texorpdfstring{${\mathcal T}_4$}{T4}}\label{app:pu4-cone-apex-gram}

The preceding section classifies the normalized $\mathbb Z_3$-symmetric cellwise isometric realizations of the projectivized space $\widetilde{\mathcal T}_4$ in $\mathbb R^3$.  We now check whether any of these projective realizations can be radially extended to an isometric realization of the full cone ${\mathcal T}_4\subset\mathbb R^6$, equipped with the ambient Euclidean metric.

Let $f=v_{1234}$ denote the full-cluster vertex of $\widetilde{\mathcal T}_4$. In the normalization used above, we have
$$I(f)=0,
\qquad
I(v_{123})=Q=\left(0,0,\frac{\sqrt6}{6}\right),
\qquad
I(v_{12})=A=\left(\frac{\sqrt6}{15},0,\frac{\sqrt6}{30}\right),$$
and by equivariance
$$I(v_{23})=RA,
\qquad
I(v_{13})=R^2A,$$
where $R$ is rotation by $2\pi/3$ around the $z$-axis.  These coordinates are common to all eight normalized solutions.

Suppose that a cone-apex vector $c\in\mathbb R^3$ existed such that $J(v_A):=I(v_A)+c$
had the same cone Gram data as the original ray vectors $v_A\in{\mathcal T}_4\subset\mathbb R^6$.  Since every $v_A$ lies in the affine slice $\sum_{i<j}d_{ij}=1$, the full-cluster vertex satisfies
\begin{align*}
\|v_{1234}\|^2=\nicefrac{1}{6}.
\end{align*}
Moreover
$$\|v_A\|^2=
\begin{cases}
1/5, & |A|=2,\\[1mm]
1/3, & |A|=3,\\[1mm]
1/6, & A=1234.
\end{cases}$$
Thus, the necessary cone-apex norm conditions are
$$\|c\|^2 = \nicefrac{1}{6},\ \|Q+c\|^2 = \nicefrac{1}{3},\ \|A+c\|^2 = \nicefrac{1}{5},\ \|RA+c\|^2 = \nicefrac{1}{5},\ \|R^2A+c\|^2 = \nicefrac{1}{5}.$$
By simple algebra and by the fact that $\|Q\|^2=1/6$, $\|A\|^2=1/30$ these conditions imply $\langle Q,c\rangle=\langle A,c\rangle=\langle RA,c\rangle=\langle R^2A,c\rangle=0$
But $Q, A, RA$ span $\mathbb R^3$. Hence $c=0$, contradicting $\|c\|^2=1/6$. Therefore, no such cone-apex vector exists. The supplementary symbolic check records this last step in the equivalent exact form $\operatorname{rank}\{Q, A, RA\}=3$ and confirms that the corresponding real Gram system has no solution.
Consequently, none of the eight normalized $\mathbb Z_3$-symmetric isometric realizations of $\widetilde{\mathcal T}_4$ in $\mathbb R^3$ admits a radial lift to an isometric realization of the full Euclidean cone ${\mathcal T}_4$ in the same ambient space $\mathbb R^3$.

There is, however, a canonical lift in one additional Euclidean dimension.  Indeed, since $I$ realizes the translated projective slice $v_A-v_{1234}$, define
$$\widehat I(v_A)=\left(I(v_A),\frac{1}{\sqrt6}\right)\in\mathbb R^4.$$
Then, for all vertices $v_A,v_B$ belonging to a common cone cell,
$$\langle \widehat I(v_A),\widehat I(v_B)\rangle=\langle I(v_A),I(v_B)\rangle+\frac{1}{6}=\langle v_A,v_B\rangle_{\mathbb R^6}.$$
By linear extension on each cone cell, this gives a $\mathbb Z_3$-equivariant isometric immersion of ${\mathcal T}_4$ in $\mathbb R^4$.

\end{document}